\definecolor{rwth}   {RGB}{  0  84 159}
\definecolor{rwth-75}{RGB}{ 64 127 183}
\definecolor{rwth-50}{RGB}{142 186 229}
\definecolor{rwth-25}{RGB}{199 221 242}
\definecolor{rwth-10}{RGB}{232 241 250}
\newcommand{\R}{\mathbb{R}}
\newtheorem{theorem}{Theorem}[section]
\newtheorem{lemma}[theorem]{Lemma}
\newtheorem{defn}[theorem]{Definition}
\newtheorem{proposition}[theorem]{Proposition}
\newtheorem{remark}[theorem]{Remark}
\definecolor{Red}{rgb}{1,0,0}
\definecolor{Blue}{rgb}{0,0,1}
\definecolor{red}{rgb}{1,0,0}
\definecolor{blue}{rgb}{0,0,1}
\newcommand{\ha}{h_{\alpha}}
\newcommand{\ua}{u_{\alpha}}
\newcommand{\va}{v_{\alpha}}
\newcommand{\phia}{\varphi_{\alpha}}
\newcommand{\Ta}{T_{\alpha}}
\newcommand{\la}{l_{\alpha}}
\newcommand{\uj}{u_{j}}
\newcommand{\lj}{l_{j}}
\newcommand{\zb}{z_{b}}
\newcommand{\sa}{\sigma_{\alpha}}
\newcommand{\ta}{\theta_{\alpha}}
\newcommand{\Ec}{E^c}
\newcommand{\Eca}{\Ec_{\alpha}}
\newcommand{\Ecasa}{\Eca \sa}
\newcommand{\baru}{\bar{u}}
\newcommand{\barv}{\bar{v}}
\newcommand{\barT}{\bar{T}}
\newcommand{\CFL}{\mathrm{CFL}}
\newcommand{\Dtn}{\Delta t^{n}}
\newcommand{\dtk}{\delta t^{k}}
\newcommand{\Fha}{F^{\ha}}
\newcommand{\Fh}{F^{h}}
\newcommand{\Fhaua}{F^{\ha \ua}}
\newcommand{\FhaTa}{F^{\ha \Ta}}
\newcommand{\FEcasa}{F^{\Eca \sa}}
\newcommand{\Fhajp}{\Fha_{j+\frac12}}
\newcommand{\Fhajm}{\Fha_{j-\frac12}}
\newcommand{\Fhajpm}{\Fha_{j\pm\frac12}}
\newcommand{\Fhjp}{\Fh_{j+\frac12}}
\newcommand{\Fhjpm}{\Fh_{j\pm\frac12}}
\newcommand{\Fhjm}{\Fh_{j-\frac12}}
\newcommand{\Fhauajp}{\Fhaua_{j+\frac12}}
\newcommand{\Fhauajm}{\Fhaua_{j-\frac12}}
\newcommand{\FhaTajp}{\FhaTa_{j+\frac12}}
\newcommand{\FhaTajm}{\FhaTa_{j-\frac12}}
\newcommand{\FEcasajp}{F^{\Eca \sa}_{j+\frac12}}
\newcommand{\FEcasajm}{F^{\Eca \sa}_{j-\frac12}}
\newcommand{\uap}{u_{\alpha+\frac12}}
\newcommand{\uam}{u_{\alpha-\frac12}}
\newcommand{\vap}{v_{\alpha+\frac12}}
\newcommand{\vam}{v_{\alpha-\frac12}}
\newcommand{\Tap}{T_{\alpha+\frac12}}
\newcommand{\Tam}{T_{\alpha-\frac12}}
\newcommand{\hjn}{h_{j}^{n}}
\newcommand{\hjpn}{h_{j+1}^{n}}
\newcommand{\hjd}{h_{j}^{n+\frac12}}
\newcommand{\hjnp}{h_{j}^{n+1}}
\newcommand{\han}{h_{\alpha}^{n}}
\newcommand{\hajn}{h_{\alpha,j}^{n}}
\newcommand{\hajpn}{h_{\alpha,j+1}^{n}}
\newcommand{\hajs}{h_{\alpha,j}^{\star}}
\newcommand{\has}{h_{\alpha}^{\star}}
\newcommand{\had}{h_{\alpha}^{n+\frac12}}
\newcommand{\hajd}{h_{\alpha,j}^{n+\frac12}}
\newcommand{\uan}{u_{\alpha}^{n}}
\newcommand{\uajn}{u_{\alpha,j}^{n}}
\newcommand{\uajnp}{u_{\alpha,j}^{n+1}}
\newcommand{\uajpn}{u_{\alpha,j+1}^{n}}
\newcommand{\uajs}{u_{\alpha,j}^{\star}}
\newcommand{\uas}{u_{\alpha}^{\star}}
\newcommand{\uad}{u_{\alpha}^{n+\frac12}}
\newcommand{\uajps}{u_{\alpha+1,j}^{\star}}
\newcommand{\uajms}{u_{\alpha-1,j}^{\star}}
\newcommand{\uajpds}{u_{\alpha+\frac12,j}^{\star}}
\newcommand{\uajmds}{u_{\alpha-\frac12,j}^{\star}}
\newcommand{\uajd}{u_{\alpha,j}^{n+\frac12}}
\newcommand{\uasjp}{u_{\alpha, {j+\frac12}}^{n} }
\newcommand{\uasjm}{u_{\alpha, {j-\frac12}}^{n} }
\newcommand{\uajpdnd}{u_{\alpha+\frac12,j}^{n+\frac12}}
\newcommand{\uajmdnd}{u_{\alpha-\frac12,j}^{n+\frac12}}
\newcommand{\Tan}{T_{\alpha}^{n}}
\newcommand{\Tajn}{T_{\alpha,j}^{n}}
\newcommand{\Tajnp}{T_{\alpha,j}^{n+1}}
\newcommand{\Tas}{T_{\alpha}^{\star}}
\newcommand{\Tad}{T_{\alpha}^{n+\frac12}}
\newcommand{\Tajd}{T_{\alpha,j}^{n+\frac12}}
\newcommand{\Gap}{G_{\alpha+\frac12}}
\newcommand{\Gam}{G_{\alpha-\frac12}}
\newcommand{\Gapm}{G_{\alpha \pm \frac12}}
\newcommand{\Gapj}{G_{\alpha+\frac12,j}}
\newcommand{\Gamj}{G_{\alpha-\frac12,j}}
\newcommand{\fhk}{f^{h,k}}
\newcommand{\fhuk}{f^{h \baru,k}}
\newcommand{\fhsak}{f^{h \sa,k}}
\newcommand{\fhTak}{f^{h \Ta,k}}
\newcommand{\Sjndk}{S_{j}^{k}}
\newcommand{\barujdk}{\baru_{j}^{n+\frac12,k}}
\newcommand{\barujdkp}{\baru_{j}^{n+\frac12, k+1}}
\newcommand{\fhkjp}{\fhk_{j+\frac12}}
\newcommand{\fhkjm}{\fhk_{j-\frac12}}
\newcommand{\fhkjpm}{\fhk_{j\pm\frac12}}
\newcommand{\fhukjp}{\fhuk_{j+\frac12}}
\newcommand{\fhukjm}{\fhuk_{j-\frac12}}
\newcommand{\fhukjpm}{\fhuk_{j\pm\frac12}}
\newcommand{\fhsakjp}{\fhsak_{j+\frac12}}
\newcommand{\fhsakjm}{\fhsak_{j-\frac12}}
\newcommand{\fhTakjp}{\fhTak_{j+\frac12}}
\newcommand{\fhTakjm}{\fhTak_{j-\frac12}}
\newcommand{\fhjp}{\mathcal{F}^{h}_{j+\frac12}}
\newcommand{\fhjm}{\mathcal{F}^{h}_{j-\frac12}}
\newcommand{\fhsajp}{\mathcal{F}^{h \sa}_{j+\frac12}}
\newcommand{\fhsajm}{\mathcal{F}^{h \sa}_{j-\frac12}}
\newcommand{\fhTajp}{\mathcal{F}^{h \Ta}_{j+\frac12}}
\newcommand{\fhTajm}{\mathcal{F}^{h \Ta}_{j-\frac12}}
\newcommand{\hjdkp}{h_{j}^{n+\frac12,k+1}}
\newcommand{\hjdk}{h_{j}^{n+\frac12,k}}
\newcommand{\sajnp}{\sigma_{\alpha,j}^{n+1}}
\newcommand{\sajd}{\sigma_{\alpha,j}^{n+\frac12}}
\newcommand{\sumaN}{\sum_{\alpha=1}^{N}}
\newcommand{\sumja}{\sum_{j=1}^{\alpha}}
\newcommand{\dt}{\frac{\partial}{\partial t}}
\newcommand{\dx}{\frac{\partial}{\partial x}}
\newcommand{\dy}{\frac{\partial}{\partial y}}
\newcommand{\ddt}[1]{\frac{\partial #1}{\partial t}}
\newcommand{\ddx}[1]{\frac{\partial #1}{\partial x}}
\newcommand{\ddy}[1]{\frac{\partial #1}{\partial y}}
\newcommand{\ddz}[1]{\frac{\partial #1}{\partial z}}
\newcommand{\pos}[1]{\left( #1 \right)^+}
\newcommand{\nega}[1]{\left( #1 \right)^-}
\title{Barotropic-Baroclinic Splitting for Multilayer Shallow Water Models with Exchanges}
\author{Nina Aguillon$^{1}$, Sophie H\"ornschemeyer$^{2}$, Jacques Sainte-Marie$^{1}$}
\affil{$^{1}$ Sorbonne Universit\'e, CNRS, Inria, Laboratoire J.-L. Lions, F-75005 Paris, France \and $^{2}$ Institute for Geometry and Practical Mathematics, RWTH Aachen University, Germany}
\date{}
\pgfplotsset{compat=1.17}
\begin{document}

\maketitle



\begin{abstract}    
    This work presents the numerical analysis of a barotropic-baroclinic splitting in a nonlinear multilayer framework with exchanges between the layers in terrain-following coordinates. The splitting is formulated as an exact operator splitting. The barotropic step handles free surface evolution and depth-averaged velocity via a well-balanced one-layer model, while the baroclinic step manages vertical exchanges between layers and adjusts velocities to their mean values.
    We show that the barotropic-baroclinic splitting preserves total energy conservation and meets both a discrete maximum principle and a discrete entropy inequality.
    Several numerical experiments are presented showing the gain in computational cost, particularly in low Froude simulations, with no loss of accuracy. The benefits of using a well-balancing strategy in the barotropic step to preserve the geostrophic equilibrium are inherited in the overall scheme.
\end{abstract}

\paragraph{Keywords:} {\it multilayer models, Saint-Venant equations, splitting, free surface flows, well-balancing, density-stratified flows}
\vskip0.5cm


\section*{Introduction}

Ocean global circulation models (OGCM) are numerical cores able to simulate the ocean with atmospheric forcing on the whole planet for very long periods of time. They are used in the CMIP simulations (Coupled Model Intercomparison Project) that inform IPPC reports (Inter-governmental Panel on Climate Change). In such simulations the primary focus is on the evolution of stratification and ocean currents through time, and the space and time scales are enormous. The underlying equations are the free surface Navier Stokes equations with hydrostatic pressure and the Boussinesq assumption. Without any specific adjustment the time step is constrained by the fastest wave speed, namely of the surface gravity waves with order of magnitude of $200\, \mathrm{ms}^{-1}$. The fastest current velocity is of about $3\,\mathrm{ms}^{-1}$.

Such a severe constraint is unacceptable and from the very beginning of global ocean circulation simulation efforts have been made to alleviate this constraint. The old rigid lid approximation~\cite{B69} quickly showed its limits and was replaced either by the implicit surface height method~\cite{DR94} or by the barotropic-baroclinic decomposition~\cite{KSP91} under study in the present work.
The main idea is that the surface gravity waves arise from a vertically integrated (barotropic) two-dimensional system resembling to the shallow water model. The other phenomenon reflect the deviation from this barotropic state and correspond to a fully three-dimensional system with much slower behavior (baroclinic). We also refer the reader to~\cite{H99} and~\cite{SM05} for different vertical coordinates and to~\cite{DDMLBE19},~\cite{LJWG22} and references therein for more recent developments.

In this paper we revisit the barotropic-baroclinic splitting in the setting of an inviscid terrain-following vertical coordinate ($\sigma$-model) and in the constant density case for simplicity. We propose a nonlinear framework where it reads as an operator splitting. We study the hyperbolic properties of both parts of the splitting and illustrate the ability of the splitting in considerably reducing the computational cost without losing accuracy. This is achieved in the framework of multilayer (or layer-averaged) shallow water models (\cite{ABPSM11a}, \cite{ABPSM11b}, \cite{ABF19} and references therein). Such models are also used for regional applications for which the barotropic-baroclinic splitting can also have an advantage as soon as the physical phenomenon under scrutiny is not about the free surface waves but rather on a slower material time scale.

Let us briefly recall how multilayer shallow water models are obtained. The starting point are the inviscid Euler equations with a passive tracer $T$
\begin{equation} \label{eq:Euler}
    \begin{cases}
        \ddx u + \ddy v + \ddz w = 0, \\
        \rho_0 \left( \ddt{u} + u \ddx u + v \ddy u + w \ddz u \right) + \ddx p = 0, \\
        \rho_0 \left( \ddt v + u \ddx v + v \ddy v + w \ddz v \right) + \ddy p = 0, \\
        \rho_0 \left( \ddt w + u \ddx w + v \ddy w + w \ddz w \right) + \ddz p = -\rho_0 g, \\
        \ddt T + u \ddx T + v \ddy T + w \ddz T = 0
    \end{cases}
\end{equation}
where $(u,v,w)$ is the 3D velocity field of the fluid of constant density $\rho_0$ and $g$ is the acceleration of gravity. We suppose that at any point $(x,y)$ in the horizontal plane the water fills the vertical interval $[\zb, \eta]$ where $\zb$ is the bottom topography and $\eta$ is the free surface. We now perform a semi discretization of~\eqref{eq:Euler} along the vertical.  The column of water is divided into $N$ layers with interfaces
$$ \zb=z_{\frac12}< z_{\frac32}< \dots < z_{\alpha-\frac12}<z_{\alpha+\frac12}< z_{N+\frac12}= \eta. $$
The $\alpha$-th layer has total height $\ha= z_{\alpha+\frac12}-z_{\alpha+\frac12}$, mean horizontal velocity 
$$(\ua, \va)= \left( \frac{1}{\ha} \int_{z_{\alpha-\frac12}}^{z_{\alpha + \frac12}} u(t,x,y,z) \, dz,\, \frac{1}{\ha} \int_{z_{\alpha-\frac12}}^{z_{\alpha + \frac12}} v(t,x,y,z) \, dz \right),$$
and we also denote the mean tracer in the layer $\Ta= \frac{1}{\ha} \int_{z_{\alpha-\frac12}}^{z_{\alpha + \frac12}} T(t,x,y,z) \, dz$.

The total water height is $h = \sumaN \ha$ and the mean horizontal velocities are given by $\baru = \frac{1}{h}\sumaN \ha \ua$ and $\barv = \frac{1}{h}\sumaN \ha \va$.
Integrating equation~\eqref{eq:Euler} for $z \in [z_{\alpha-\frac12}, z_{\alpha+\frac12}]$ and making the approximations of quadratic terms
$$ \frac{1}{\ha} \int_{z_{\alpha-\frac12}}^{z_{\alpha + \frac12}} rs \, dz \approx r_\alpha s_\alpha \quad \text{for all } r,s \in \{ u, v, T\}$$
we obtain the system
\begin{equation}\label{eq:MSW}
    \begin{cases}
        \ddt{\ha} + \dx (\ha \ua) + \dy (\ha v_\alpha)  = \Gap - \Gam,  \\ 
        \dt (\ha \ua) + \dx \left(\ha \ua^2 \right) + g \ha \ddx{h} + \dy (\ha \ua \va) = -g \ha \ddx{\zb} + \uap\Gap - \uam\Gam,   \\ 
        \dt (\ha \va) + \dx (\ha \ua \va) + \dx \left(\ha \va^2 \right) + g \ha \ddy{h} = -g \ha \ddy{\zb} + \vap\Gap - \vam\Gam,    \\ 
        \dt (\ha \Ta) + \dx (\ha \ua \Ta) +  \dy (\ha \va \Ta) = \Tap\Gap- \Tam\Gam 
    \end{cases}
\end{equation}
with mass exchange terms $\Gapm$, interface velocities $u_{\alpha \pm \frac12}$, $v_{\alpha \pm \frac12}$ and interface tracer concentrations $T_{\alpha \pm \frac12}$ discussed later on. Note that here we considered the simplest possible case by omitting forcings, diffusive terms, and by taking a constant density. A no flux condition at the surface and bottom $G_{\frac12}= G_{N+\frac12}=0$ is imposed. The notation are gathered on   Figure~\ref{fig:notations_mc}.

The barotropic-baroclinic splitting found in the ocean community corresponds to the separation of two different types of waves: the fast surface gravity waves that arise in a depth-independent shallow water model and the adjustment with respect to this barotropic evolution in a fully baroclinic model with much slower wave speed. 
Here, we adopt a presentation very close to~\cite{LJWG22} in the framework of multilayers models~\eqref{eq:MSW}, adapting it to the case of a constant density fluid. 
Firstly the summation over the layers $\alpha = 1, \dots, N$ yields the barotropic system with baroclinic forcing
\begin{equation} \label{eq:oceanoBT}
    \begin{cases} 
        \ddt{h} + \dx (h \baru) + \dy (h \bar v) = 0, \\ 
        \dt(h \baru) + \dx \left(h \baru^2 +  \frac{g}{2}h^2 \right) + \dy (h \baru \barv) = -gh \ddx{\zb} + \dx \big(h \baru^2- h \overline{u^2} \big) + \dy (h \baru \bar v - h \overline{uv}),\\
        \dt (h \barv) + \dx (h \baru \barv) + \dy \left(h \barv^2 + \frac{g}{2}h^2 \right) = -gh \ddy{\zb} + \dx (h \baru \barv-  h \overline{uv}) + \dy \big(h \barv^2 - h \overline{v^{2}} \big)\\
    \end{cases} 
\end{equation}
where we denote 
$\overline{u^{2}} := \frac{1}{h} \sumaN \ha \ua^{2}$, $\overline{v^{2}} := \frac{1}{h} \sumaN \ha \va^{2}$ and $\overline{u v} := \frac{1}{h} \sumaN \ha \ua \va$.
Secondly some straightforward manipulations yield the baroclinic equation
\begin{equation} \label{eq:oceanoBC}
    \begin{cases}
        \dt (\ua-\baru) + \ua \ddx \ua + \va \ddy \ua + \frac{\baru}{h} \left( \dx (h \baru) + \dy (h \barv) \right) - \frac{1}{h} \left( \dx (h \overline{u^{2}}) + \dy (h \overline{u v}) \right)\\  
        \qquad \qquad =  \frac{1}{\ha}\big((\uap-\ua)\Gap - (\uam-\ua)\Gam \big), \\
        \dt (\va-\barv) + \ua \ddx \va + \va \ddy \va + \frac{\barv}{h} \left( \dx (h \baru) + \dy (h \barv) \right) - \frac{1}{h} \left( \dx (h \overline{uv}) + \dy (h \overline{v^{2}}) \right) \\
        \qquad \qquad =  \frac{1}{\ha} \big( (\vap-\va)\Gap - (\vam-\va)\Gam \big).
    \end{cases}
\end{equation}

This latter system describes the evolution of the deviation of the individual velocities from the mean vertical averaged velocities. It is a fully 3D system but no pressure or bathymetry terms appear, thus it can be solved with a large time step denoted by $\Delta t$.  In parallel, the barotropic system~\eqref{eq:oceanoBT} is used to predict the evolution of $h$ and $(\baru, \bar v)$ by perfoming many small time steps $\delta t$ constrained by the velocity of the gravity waves until the large baroclinic time step $\Delta t$ is reached. The left-hand side is freezed at its initial value and~\eqref{eq:oceanoBT} reads as the 2D shallow water equations with source term. Thus it remains computationally affordable despite the many time step.

A major issue is that at time $\Delta t$ one has to correct the update of the individual velocities $(\ua, \va)$ obtained in the baroclinic step~\eqref{eq:oceanoBC} so that their mean value $(\baru, \barv)$ coincides with the one obtained in the barotropic step~\eqref{eq:oceanoBT}. In order to stabilize the splitting OGCMs also incorporate some time filtering in the barotropic step. The mean velocity $(\baru, \barv)$ predicted at time $\Delta t$ is not taken as the final substep, but as an average of many substeps. Two particular filters are described in~\cite{DDMLBE19}: one uses all the substeps in the interval $[0, 2 \Delta t]$ and the other the ones between $[\Delta t/2, 3 \Delta t/2]$. See also~\cite{H96} for isopycnal models.

In this paper we reformulate the barotropic-baroclinic splitting as an operator splitting in Section~\ref{S:multilayer}. The evolution in time in~\eqref{eq:MSW} is decomposed into two parts:
\begin{itemize}
    \item a barotropic part gathering the transport at the mean speed $(\baru, \barv)$ and the pressure terms;
    \item a baroclinic part gathering the deviation from the mean velocity and the vertical exchange terms.
\end{itemize}
The sum of the two contributions exactly gives the source term and space derivatives of~\eqref{eq:MSW}. Similarly to the original splitting~\eqref{eq:oceanoBT}-\eqref{eq:oceanoBC} the barotropic part is essentially depth-independent and contains the surface gravity waves, while the baroclinic part is fully 3D but has much slower dynamics. The main differences are that first, the barotropic step does not have any baroclinic forcing but encompasses a redistribution of the evolution of the mean velocities over the individual velocities and second the mean velocity also evolves in the baroclinic step. 

The numerical treatment presented in Section~\ref{S:scheme} is classically based on a Lie splitting (we only present first order schemes in this article) with a subcycling procedure in the fast barotropic step, see~\cite{DL11} and~\cite{AB03} for a presentation on such techniques. The numerical treatment of the exchange terms in the baroclinic step is similar to~\cite{ABS18}. We exhibit the time step restrictions of all the parts of the scheme and prove that the total energy decreases at each time step. In other words, the scheme fulfills a discrete entropy inequality, an important property that ensures the convergence towards physical solutions of the scheme (provided that the sequence of approximation does have a limit, Lax theorem).

In Section~\ref{S:testcases} we present a variety of testcases to illustrate the good properties of the scheme. As expected, the computational cost are reduced drastically for low Froude number simulations. 
Finally, we take full advantage of proximity between the barotropic system and the simple one-layer shallow water equations to implement state of the art well-balanced schemes for this part of the system. As a result the overall scheme preserves the lake-at-rest equilibrium and the positivity of the water height with almost no additional work (see~\cite{ABBKP04},~\cite{CN17}). We also show that recent work to better maintain the geostrophic equilibrium such as~\cite{GCGP24} and~\cite{ADDGNP21} provide more accurate results for coarse-mesh long-time integration in the multilayer environment in the presence of the Coriolis force.

\section{Barotropic-baroclinic splitting at the continuous level}

This section is devoted to the presentation and the analysis of a splitting operator of the layer-averaged Euler equations~\eqref{eq:MSW} of~\cite{ABPSM11a} that shares the same properties as the barotropic-baroclinic splitting of the ocean model~\eqref{eq:oceanoBT}-\eqref{eq:oceanoBC} described in the introduction. We present the procedure on the 1D $(x,z)$-model to alleviate the notation, see Figure~\ref{fig:notations_mc}.
\begin{equation}\label{eq:MSW1D}
    \begin{cases}
        \ddt{\ha} + \dx (\ha \ua)  = \Gap- \Gam,  \\ 
        \dt (\ha \ua) + \dx \left(\ha \ua^{2} \right) + g \ha  \ddx{h} = -g \ha \ddx{\zb} + \uap\Gap - \uam\Gam,   \\ 
        \dt (\ha \Ta) + \dx (\ha \ua \Ta) = \Tap\Gap- \Tam\Gam. 
    \end{cases}
\end{equation}
The addition of a second horizontal direction $y$ brings no further difficulties. 

Before presenting the splitting let us give some more details on model~\eqref{eq:MSW1D}. Written in this form the exchange terms are unknown and the system is unclosed with $4N-1$ unknowns $(\ha, \ua, \Ta)_{1 \leq \alpha \leq N}$ and $(\Gap)_{1 \leq \alpha \leq N-1}$.

There are two families of multilayer models. One can consider that there are no exchange terms. In that case we obtain a family of shallow water equations stacked on each other and coupled through the pressure term. This model is ill-posed in the present constant density case, but is useful when the stratification is strong with respect to the velocity shear and corresponds to the so called ``isopycnal models'' (\cite[Chapter 3]{Vallis} or~\cite{CDV17}). A second possibility is to enforce a constraint on the water heights $\ha$. For example the inner separations between the layers $z_{\alpha+\frac12}$, $1 \leq \alpha \leq N-1$, are horizontal ($z$-coordinates in ocean models), or all the layers are a given fraction of the total water height (``terrain-following models'' or $\sigma$-coordinates).

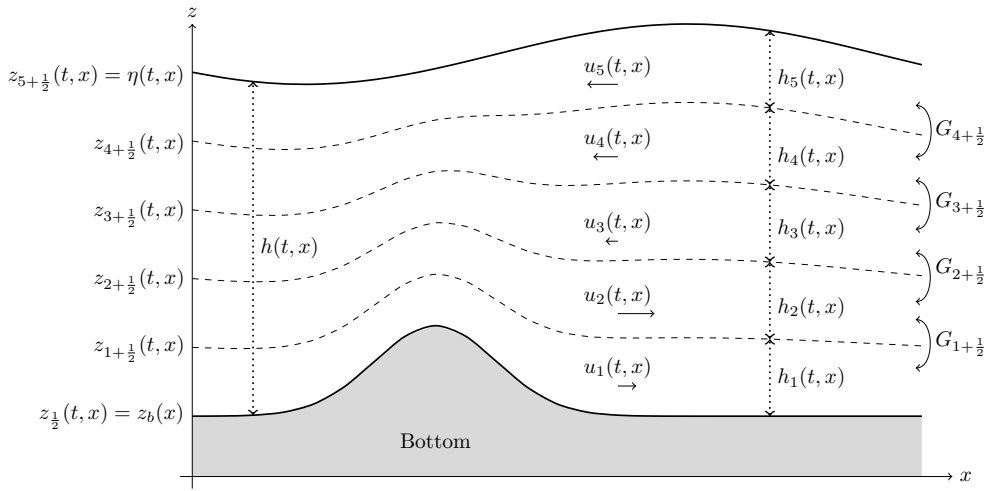
\begin{figure}[hbtp]
    \centering
    \scalebox{0.8}{
    \begin{tikzpicture}[domain=0:12]

      \fill [gray!30, domain=0:12] plot ({\x},{1.5*exp(-1/2*(\x-4)^2)}) |- (0,-1);
      \draw (4,-0.4) node {Bottom}; 
      \draw[smooth, thick] plot (\x,{0.5*sin(1/2*(\x-5) r) + 6});
      \draw (0,5.6) node[left] {$z_{5+\frac12}(t,x) = \eta(t,x)$};
      
      \draw[smooth, thick] node[left] {$z_{\frac12}(t,x) = z_b(x)$} plot (\x,{1.5*exp(-1/2*(\x-4)^2)});
      
      \foreach \k in {1,...,4}
        \draw[smooth, dashed] plot(\x,{\k/5*(0.5*sin(1/2*(\x-5) r) + 6 - 1.5*exp(-1/2*(\x-4)^2)) + 1.5*exp(-1/2*(\x-4)^2)});
        
      \draw (0,1.12) node[left] {$z_{1+\frac12}(t,x)$};
      \draw (0,2.24) node[left] {$z_{2+\frac12}(t,x)$};
      \draw (0,3.36) node[left] {$z_{3+\frac12}(t,x)$};
      \draw (0,4.48) node[left] {$z_{4+\frac12}(t,x)$};
        
      \draw[<->, dotted, thick] (1,0.016663494807363458) -- (1,5.5453512865871595);
      \draw (1,2.8) node[right] {$h(t,x)$};
      
      \draw[<->, dotted, thick] (9.5,0) -- (9.5,1.2778076436382126);
      \draw (9.5,0.63) node[right] {$h_1(t,x)$};
      \draw[<->, dotted, thick] (9.5,1.2778076436382126) -- (9.5,2.55561488233965);
      \draw (9.5,1.8) node[right] {$h_2(t,x)$};
      \draw[<->, dotted, thick] (9.5,2.55561488233965) -- (9.5,3.8334221210410875);
      \draw (9.5,3.1) node[right] {$h_3(t,x)$};
      \draw[<->, dotted, thick] (9.5,3.8334221210410875) -- (9.5,5.1112293597425245);
      \draw (9.5,4.3) node[right] {$h_4(t,x)$};
      \draw[<->, dotted, thick] (9.5,5.1112293597425245) -- (9.5,6.389036598444);
      \draw (9.5,5.6) node[right] {$h_5(t,x)$};
      
      \draw[->] (7,0.5) node[above] {$u_1(t,x)$} -- (7.3,0.5);
      \draw[->] (7,1.7) node[above] {$u_2(t,x)$} -- (7.6,1.7);
      \draw[<-] (6.8,2.9) -- (7,2.9) node[above] {$u_3(t,x)$};
      \draw[<-] (6.6,4.3) -- (7,4.3) node[above] {$u_4(t,x)$};
      \draw[<-] (6.5,5.5) -- (7,5.5) node[above] {$u_5(t,x)$};
      
      \draw[<->, bend angle=90, bend right] (11.9,0.8) to (11.9,1.6);
      \draw (12.1,1.2) node[right] {$G_{1+\frac12}$};
      \draw[<->, bend angle=90, bend right] (11.9,1.9) to (11.9,2.7);
      \draw (12.1,2.4) node[right] {$G_{2+\frac12}$};
      \draw[<->, bend angle=90, bend right] (11.9,3.1) to (11.9,3.9);
      \draw (12.1,3.5) node[right] {$G_{3+\frac12}$};
      \draw[<->, bend angle=90, bend right] (11.9,4.3) to (11.9,5.15);
      \draw (12.1,4.7) node[right] {$G_{4+\frac12}$};
      
      \draw[->] (-0.2,-1) -- (12.5,-1) node[right] {$x$};
      \draw[->] (0,-1.2) -- (0,6.5) node[above] {$z$};
      
    \end{tikzpicture}}
    \caption{Notations for the multilayer approach.}
    \label{fig:notations_mc}
\end{figure}

We focus on the latter family and place our footsteps in the family of multilayer models developed in~\cite{ABPSM11a},~\cite{ABPSM11b},~\cite{ABF19} and references therein. The notation is gathered in Figure~\ref{fig:notations_mc}. The weights $(\la)_{\alpha \in \{1, \dots, N \} }$ with $\sumaN \la =1$ and $\la>0$ are fixed once and for all.  In such models with mass exchanges, the unknowns in system~\eqref{eq:MSW1D} are the total water height $h$ and the individual velocities $(\ua, \va)_{\alpha \in \{1, \dots, N\}}$. The individual water heights $\ha$ verify the constraint $\ha= \la h$. The evolution of the total water height is given by the summation  over all the layers of the equation on~$\ha$ in~\eqref{eq:MSW1D}, namely
\begin{equation}
    \ddt{h} + \ddx{(h \baru)} = 0. \label{eq:h} 
\end{equation}
The exchange terms $\Gapm$ are Lagrange multipliers enforcing the constraints $\ha= \la h$ at all time. In other words the individual equation on $\ha$ should be understood as a definition of the exchange terms $\Gapm$. Multiplying equation~\eqref{eq:h} by $\la$ and substracting it from the equation on $\ha$ in~\eqref{eq:MSW1D} to cancel the time derivative we obtain
\begin{equation} \label{eq:G}
    \Gap-\Gam = \la \dx  (h (\ua-\baru)). 
\end{equation}
The final closed system reads
\begin{equation} \label{eq:MSW1D-closed}
    \begin{cases}
        \ddt{h} + \dx (h \baru) = 0,  \\ 
        \dt (h \ua) + \dx \left(h \ua^{2} +  \frac{g}{2}h^{2}\right) = -gh \ddx{\zb} + \frac{1}{\la} \big( \uap \Gap - \uam \Gam \big) ,   \\ 
        \dt (h \Ta) + \dx \left(h \ua \Ta \right) = \frac{1}{\la} \big( \Tap \Gap - \Tam \Gam \big),   \\ 
         \Gap  =  \sumja \lj \dx (h (\uj-\baru)).
    \end{cases}
\end{equation}
The interface velocities $\uap$ and interface tracers $\Tap$ are defined using an upwinding strategy
\begin{equation} \label{eq:interu}
    \uap = 
    \begin{cases}
        \ua, & \mbox{if } \;\Gap \leq 0 \\
        u_{\alpha+1}, & \mbox{if } \;\Gap > 0
    \end{cases}
    \quad \text{ and } \quad
    \Tap =
    \begin{cases}
        \Ta, & \mbox{if } \;\Gap \leq 0\\
        T_{\alpha+1}, & \mbox{if } \;\Gap > 0
    \end{cases}.
\end{equation}
We now propose a barotropic-baroclinic spitting for the set of equations~\eqref{eq:MSW1D-closed}.
\subsection{Barotropic-baroclinic splitting for the multilayer shallow water model with mass exchanges} \label{S:multilayer}

\begin{defn}
    The barotropic step corresponds to the  advection of $h$, $\ua$ and $\Ta$ at the mean velocity~$\baru$ together with the pressure force and the topography source term. It writes 
    \begin{equation}  \label{eq:BT-closed}
        \begin{cases} 
            \ddt{h} + \dx (h \baru) = 0, \\ 
            \dt (h \ua) + \dx \left(h \ua \baru + \frac{g}{2} h^{2} \right) = -g h \ddx{\zb},  \\
            \dt (h\Ta) + \dx (h\baru \Ta) = 0. 
        \end{cases}
    \end{equation}
    The baroclinic step gathers the nonlinear adjustment around the mean vertical velocity $\baru$ and the exchange terms to maintain the constraint $\ha=\la h$.
    
    It  writes
    \begin{equation} \label{eq:BC}
        \begin{cases}
            \ddt h = 0,  \\
            \dt (h \ua) + \dx \left(h \ua (\ua -\baru) \right) = \frac{1}{\la} \big( \uap \Gap - \uam \Gam \big), \\
            \dt (h \Ta) + \dx \left(h \Ta (\ua -\baru) \right) = \frac{1}{\la} \big( \Tap \Gap - \Tam \Gam \big), \\
            \Gap =  \sumja \lj \dx (h (\uj-\baru))
        \end{cases}
    \end{equation}
    where the definition of the interface velocities is given by~\eqref{eq:interu}. 
\end{defn}
An important feature of this splitting is that it is an operator splitting.
\begin{proposition}
    Write the multilayer operator~\eqref{eq:MSW1D-closed} as
    $$ \partial_t \begin{pmatrix} h \\ h \ua \\ h \Ta \end{pmatrix} = \operatorname{ML} \begin{pmatrix} h \\ h \ua \\ h \Ta \end{pmatrix} $$
    and similarly the barotropic operator~\eqref{eq:BT-closed} and the baroclinic operator~\eqref{eq:BC} respectively as
    $$ \partial_t \begin{pmatrix} h \\ h \ua \\ h \Ta \end{pmatrix} = \operatorname{BT} \begin{pmatrix} h \\ h \ua \\ h \Ta \end{pmatrix} \ \text{ and } \  \partial_t \begin{pmatrix} h \\ h \ua \\ h \Ta \end{pmatrix} = \operatorname{BC} \begin{pmatrix} h \\ h \ua \\ h \Ta \end{pmatrix}. $$
    Then $\operatorname{ML} = \operatorname{BT} + \operatorname{BC}$.
\end{proposition}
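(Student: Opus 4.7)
The proof amounts to a termwise verification that, for each of the three conservative variables $h$, $h\ua$, $h\Ta$, the sum of the right-hand-sides (read as spatial operators plus source terms) of~\eqref{eq:BT-closed} and~\eqref{eq:BC} reproduces the right-hand-side of~\eqref{eq:MSW1D-closed}. My plan is simply to line the three systems up and check each equation in turn.

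First, for the mass equation, the barotropic step carries the full flux $\partial_x(h\baru)$ while the baroclinic step carries $\partial_t h = 0$, so the sum is immediate. Second, for the momentum equation, I will add the two fluxes and use the algebraic identity
\begin{equation*}
\partial_x\!\bigl(h\ua\baru + \tfrac{g}{2}h^2\bigr) + \partial_x\!\bigl(h\ua(\ua-\baru)\bigr)
= \partial_x\!\bigl(h\ua^2 + \tfrac{g}{2}h^2\bigr),
\end{equation*}
so that the cross term $\partial_x(h\ua\baru)$ cancels, while the topography source $-gh\,\partial_x\zb$ (present only in BT) and the exchange terms $\lambda_\alpha^{-1}(\uap\Gap-\uam\Gam)$ (present only in BC) appear exactly as in~\eqref{eq:MSW1D-closed}. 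Third, for the tracer equation the same cancellation argument gives $\partial_x(h\baru\Ta)+\partial_x(h\Ta(\ua-\baru))=\partial_x(h\ua\Ta)$, and the exchange source combines correctly.

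Finally, I will note that the exchange terms $\Gap$ appear identically in BC and in the closed ML system (both via the defining relation $\Gap=\sum_{j=1}^\alpha l_j\,\partial_x(h(u_j-\baru))$ and via the upwinding rule~\eqref{eq:interu} for $\uap$, $\Tap$), so no reconciliation between the two formulas is needed. Because each scalar equation verifies the additivity separately, it follows that $\operatorname{ML}=\operatorname{BT}+\operatorname{BC}$ as operators acting on the triple $(h, h\ua, h\Ta)$.

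There is no real obstacle here: the only point requiring any care is the bookkeeping of the advective flux $h\ua^2$, which gets split as $h\ua\baru + h\ua(\ua-\baru)$ across the two substeps. This is the whole content of the splitting, and the proposition simply records that the decomposition is exact at the continuous level.
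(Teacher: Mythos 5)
Your proof is correct and follows essentially the same route as the paper's: the paper's (one-line) argument is precisely the decomposition $\ua = \baru + (\ua-\baru)$ in one factor of the advective flux, which is the identity you verify termwise for the mass, momentum, and tracer equations. Your version merely spells out the bookkeeping in more detail, including the observation that the topography source, the exchange terms, and their upwinding rule each live in exactly one of the two substeps.
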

\begin{proof} 
    In the original model~\eqref{eq:MSW1D-closed} the first term $\ua$ in the equation on $h \ua$ is written as $\ua= \baru + (\ua-\baru)$. Then the mean term in $\baru$ yields the barotropic part~\eqref{eq:BT-closed} while the deviation term $\ua-\baru$ yields the baroclinic part~\eqref{eq:BC}.
\end{proof}
We now emphasize the proximity of the barotropic step~\eqref{eq:BT-closed} with the one layer shallow water model.
\begin{proposition}
    System~\eqref{eq:BT-closed} in variables $(h, h \ua, h \Ta)_{\alpha \in \{1, \ldots, N\}}$ is equivalent to the system in variables $(h, h \baru, (h \sa)_{\alpha \in \{1, \ldots, N-1\}}, (h \Ta)_{\alpha \in \{1, \ldots, N\}})$ with $\sa= \ua-\baru$:
    \begin{equation}
        \begin{cases}{\label{eq:BT}}
            \ddt{h} + \dx (h \baru) = 0, \\ 
            \dt (h \baru) + \dx \left(h \baru^2 +  \frac{g}{2}h^2\right) = -gh \ddx{\zb}, \\ 
            \dt (h \sa) + \dx \left(h \baru \sa \right)= 0, \\
            \dt (h \Ta) + \dx \left(h \baru \Ta \right)= 0.
        \end{cases}
    \end{equation}
    The evolution of $h \sigma_{N}$  is similar as in the other layers, but this quantity is deduced from the others as $\sumaN \la h \sa = 0$.
\end{proposition}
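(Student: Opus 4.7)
The plan is to establish the equivalence by going in both directions through linear combinations of the $h\ua$ equations, exploiting the identity $h\baru = \sumaN \la h\ua$ (which follows from $\ha=\la h$ and the definition of $\baru$).

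First, I would derive the equation on $h\baru$ from~\eqref{eq:BT-closed}. Applying $\sumaN \la \,\cdot\,$ to the $h\ua$ equation, using $\sumaN \la = 1$, gives
\begin{equation*}
    \dt\Bigl(\sumaN \la\, h\ua\Bigr) + \dx\Bigl(\bigl(\sumaN \la \ua\bigr) h \baru + \tfrac{g}{2}h^{2}\Bigr) = -gh \ddx{\zb},
\end{equation*}
which is exactly $\dt(h\baru) + \dx(h\baru^{2} + \tfrac{g}{2}h^{2}) = -gh\ddx{\zb}$. Next, to obtain the equation on $h\sa$, I would subtract this $h\baru$ equation from the $h\ua$ equation. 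The pressure and topography terms cancel, and the flux reduces to $\dx(h\ua\baru - h\baru^{2}) = \dx(h\baru\sa)$, yielding $\dt(h\sa) + \dx(h\baru\sa) = 0$. The equations on $h$ and $h\Ta$ are unchanged. The constraint $\sumaN \la h \sa = 0$ is then immediate from the very definition of $\baru$, since $\sumaN \la \sa = \sumaN \la \ua - \baru = 0$.

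For the converse direction, starting from~\eqref{eq:BT} I would simply add the $h\baru$ equation and the $h\sa$ equation: the time derivatives combine into $\dt(h(\baru+\sa)) = \dt(h\ua)$, the fluxes combine into $\dx(h\baru^{2} + h\baru\sa + \tfrac{g}{2}h^{2}) = \dx(h\ua\baru + \tfrac{g}{2}h^{2})$, and the topography term is unchanged, recovering the $h\ua$ equation of~\eqref{eq:BT-closed}. I would also note that the $N$-th equation on $h\sigma_{N}$ of the $\sa$-system is not independent: summing the $N-1$ first $h\sa$ equations weighted by $\la$ and using $\sumaN \la h \sa = 0$ forces the expected equation on $h \sigma_{N}$, which is consistent with the fact that $\baru + \sigma_{N}$ equals $u_{N}$ and therefore the $\alpha=N$ equation of~\eqref{eq:BT-closed} holds automatically.

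There is no real obstacle: the proof is a direct bookkeeping argument relying on the linearity of the change of variables $(\ua)_{\alpha} \mapsto (\baru, (\sa)_{\alpha < N})$ and on the fact that the pressure and topography source terms are layer-independent so they act only on the mean. The only minor point that requires care is the redundancy of the $\sigma_{N}$ equation, which I would handle by explicitly invoking the constraint $\sumaN \la h \sa = 0$ to close the system.
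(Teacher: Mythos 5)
Your proof is correct and follows essentially the same route as the paper: sum the $h\ua$ equations weighted by $\la$ (using $\baru=\sumaN\la\ua$) to get the $h\baru$ equation, then subtract to get the $h\sa$ equation. The converse direction and the remark on the redundancy of the $\sigma_N$ equation are straightforward additions that the paper leaves implicit.
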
 \label{prop:rewriting_BT}

\begin{proof}
    The equation on $h \baru$ is obtained by summing the equations on $h \ua$ weighted by $\la$. The substraction of the individual equation on  $h \ua$ with the new equation on $h \baru$ gives the equation on $h \sa$. 
\end{proof}

\begin{remark}
Another possibility is to treat the tracer in a similar fashion, separating the evolution of its mean value $\barT = \sumaN\la \Ta$ and of the deviations $\ta=\Ta-\barT$:
    \begin{equation*}
        \begin{cases}
            \ddt{h} + \dx (h \baru) = 0, \\ 
            \dt (h \baru) + \dx \left(h \baru^2 +  \frac{g}{2}h^2\right) = -gh \ddx{\zb}, \\ 
            \dt (h \barT) + \dx \left(h \baru \barT \right) = 0, \\ 
            \dt (h \sa) + \dx \left(h \baru \sa \right)= 0, \\
            \dt (h \ta) + \dx \left(h \baru \ta \right)= 0.
        \end{cases}
    \end{equation*}
This form could be useful if $T$ was an active tracer, meaning that the pressure would depend on the values of $(\Ta)_{\alpha \in \{1, \ldots, N\}}$. The extension to this case is left for future work.
\end{remark}

Let us take a closer look on the individual equation on $\ha$.
\begin{remark}
    For the barotropic system~\eqref{eq:BT-closed} the individual equation on $\ha$ is given by
    $$ \ddt{\ha} + \dx (\ha \baru) = 0. $$
    Together with the equation on $h$ it holds
    $$ \dt (\ha- \la h) + \dx ((\ha- \la h) \baru) = 0 $$
    and it is clear that if the constraint $ \ha= \la h$ initially holds this property is preserved through time. This explains why there are no exchange terms in this step.
    
    For the baroclinic step the definition of the exchange terms yields $\Gap - \Gam = \la \dx (h (\ua-\baru))$ and
   $$ \ddt{\ha} + \dx \left(\ha (\ua -\baru) \right) = \Gap - \Gam.$$
    Once again the exchange terms can be seen as Lagrange multipliers enforcing the constraints $\ha=\la h$.
\end{remark}

We now turn to the classical question of energy conservation or dissipation for both parts of the splitting. 
In the unsplit multilayer system~\eqref{eq:MSW1D-closed}-\eqref{eq:interu} the total energy $E=   \frac{g h^{2}}{2} + g h \zb + \sumaN \frac{\ha \ua^{2}}{2}$ verifies the energy inequality~\cite{ABPSM11a}
\begin{equation}\label{eq:entropyMSW}
    \dt E+ \dx \left( g h^{2} \baru + g h \zb \baru + \sumaN \frac{\ha \ua^{3} }{2} \right) \leq 0.
\end{equation}
\begin{proposition} \label{P:energyequality} 
    The barotropic-baroclinic splitting~\eqref{eq:BT-closed}-\eqref{eq:BC} is associated with a splitting of~\eqref{eq:entropyMSW}: the smooth solutions of the barotropic step~\eqref{eq:BT-closed} verify the energy equality
    \begin{equation} \label{eq:EIBT}
        \dt E+ \dx \left( g h^{2} \baru + g h \zb \baru + \sumaN \dfrac{\ha \ua^{2} \baru}{2} \right) = 0,
    \end{equation}
    while smooth solutions of the baroclinic step~\eqref{eq:BC} with interface velocities~\eqref{eq:interu} verify the energy inequality
    \begin{equation} \label{eq:EIBC}
        \dt E + \dx \left( \sumaN \frac{\ha \ua^{2} (\ua-\baru)}{2} \right) \leq 0. 
    \end{equation}
\end{proposition}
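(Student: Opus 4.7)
The plan is to handle the two inequalities separately, both starting from the identity $\partial_t(h u_\alpha^2/2) = h u_\alpha \partial_t u_\alpha + (u_\alpha^2/2)\partial_t h$ and using the mass equation to simplify. For the barotropic step, I would first combine the momentum equation of \eqref{eq:BT-closed} with the mass equation $\partial_t h + \partial_x(h\bar u)=0$ to get the non-conservative form $h\partial_t u_\alpha = -h\bar u\partial_x u_\alpha - gh\partial_x h - gh\partial_x z_b$. Multiplying by $u_\alpha$ and adding $(u_\alpha^2/2)\partial_t h = -(u_\alpha^2/2)\partial_x(h\bar u)$ yields $\partial_t(h u_\alpha^2/2) + \partial_x(h\bar u\, u_\alpha^2/2) = -g h u_\alpha\partial_x h - g h u_\alpha \partial_x z_b$. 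Multiplying by $\lambda_\alpha$ and summing produces the same flux divided by $\la$ on the left and brings out $-gh\bar u\partial_x h - gh\bar u\partial_x z_b$ on the right. Combining with $\partial_t(gh^2/2) = -gh\partial_x(h\bar u)$ and $\partial_t(gh z_b) = -g z_b\partial_x(h\bar u)$, the two algebraic identities $-gh\partial_x(h\bar u)-gh\bar u\partial_x h = -\partial_x(gh^2\bar u)$ and $-gz_b\partial_x(h\bar u)-gh\bar u\partial_x z_b = -\partial_x(gh z_b \bar u)$ close the barotropic equality \eqref{eq:EIBT}.

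For the baroclinic step, I exploit $\partial_t h = 0$ so that the potential energy contributions in $E$ are stationary and only the kinetic terms evolve. Writing $v_\alpha = u_\alpha-\bar u$ and $F_\alpha = (\uap\Gap - \uam\Gam)/\la$, the momentum equation reads $h\partial_t u_\alpha = -\partial_x(h u_\alpha v_\alpha) + F_\alpha$. Multiplying by $u_\alpha$ and rearranging with the identity
\begin{equation*}
    -u_\alpha\partial_x(h u_\alpha v_\alpha) + \partial_x\bigl(h u_\alpha^2 v_\alpha/2\bigr) = -(u_\alpha^2/2)\partial_x(h v_\alpha)
\end{equation*}
(which follows from expanding both sides by the Leibniz rule) gives
\begin{equation*}
    \partial_t(h u_\alpha^2/2) + \partial_x\bigl(h u_\alpha^2 v_\alpha/2\bigr) = -(u_\alpha^2/2)\partial_x(h v_\alpha) + u_\alpha F_\alpha.
\end{equation*}
Multiplying by $\la$ and summing over $\alpha$ yields the desired flux $\partial_x(\sum_\alpha \ha u_\alpha^2(u_\alpha-\bar u)/2)$; the right-hand side becomes $-\sum_\alpha (u_\alpha^2/2)(\Gap-\Gam) + \sum_\alpha u_\alpha(\uap\Gap-\uam\Gam)$, where the crucial telescoping identity $\la\partial_x(hv_\alpha) = \Gap - \Gam$ from the definition of $\Gap$ has been used.

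The main obstacle is showing that this remaining sum is nonpositive. I would apply a discrete summation by parts (Abel transform) on both sums, using the boundary conditions $G_{\frac12} = G_{N+\frac12} = 0$, to obtain
\begin{equation*}
    \sum_{\alpha=1}^{N-1}(u_\alpha - u_{\alpha+1})\bigl[\uap - (u_\alpha+u_{\alpha+1})/2\bigr]\Gap.
\end{equation*}
The upwind choice \eqref{eq:interu} is now decisive: when $\Gap > 0$ one has $\uap = u_{\alpha+1}$ so the bracket equals $-(u_\alpha-u_{\alpha+1})/2$, giving a contribution $-(u_\alpha-u_{\alpha+1})^2\Gap/2 \le 0$; when $\Gap \le 0$ one has $\uap = u_\alpha$ and the bracket equals $(u_\alpha-u_{\alpha+1})/2$, giving $(u_\alpha-u_{\alpha+1})^2\Gap/2 \le 0$. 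In both cases the summand is $-(u_\alpha-u_{\alpha+1})^2|\Gap|/2$, which establishes the sign and proves \eqref{eq:EIBC}. I anticipate the computation $-u_\alpha\partial_x(h u_\alpha v_\alpha) + \partial_x(hu_\alpha^2 v_\alpha/2) = -(u_\alpha^2/2)\partial_x(h v_\alpha)$ and the correct bookkeeping of the two Abel transforms to be where care is most needed.
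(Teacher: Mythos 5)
Your proposal is correct and follows essentially the same route as the paper: the barotropic equality is obtained by the same chain-rule manipulations that turn $\partial_t E$ into an exact divergence, and the baroclinic inequality by deriving the layerwise kinetic-energy balance with residual $(\ua\uap-\ua^2/2)\Gap-(\ua\uam-\ua^2/2)\Gam$, Abel-summing with $G_{\frac12}=G_{N+\frac12}=0$, and using the upwind choice~\eqref{eq:interu} to get $-\tfrac12\sum(\ua-u_{\alpha+1})^2|\Gap|\le 0$. The only difference is presentational (you spell out the non-conservative forms and the Leibniz identities more explicitly), so no further comment is needed.
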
 \label{prop:entropy}
\begin{proof}
    The proof can be found in Section~\ref{sec:continuous_Entropy_Proofs}.
\end{proof}

\subsection{Two hyberbolicity results}
In this section we investigate the hyperbolicity of the systems~\eqref{eq:BT-closed} and~\eqref{eq:BC}. We ignore the passive tracer $T$ and focus on the nonlinear part. 
\begin{proposition}  
    The barotropic system~\eqref{eq:BT-closed} with flat topography $\zb=0$ is hyberbolic with eigenvalues $\baru - \sqrt{gh}$, $\bar u$ with $N-1$ multiplicity and $\baru+\sqrt{gh}$.
\end{proposition}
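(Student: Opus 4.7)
The plan is to apply the rewriting from Proposition~\ref{prop:rewriting_BT} and then read the spectrum off a block-triangular Jacobian. Ignoring the tracer and taking $\zb=0$, in the variables $U=(h,\, h\baru,\, h\sigma_1,\dots,h\sigma_{N-1})^T$ with $\sigma_\alpha = \ua - \baru$, system~\eqref{eq:BT-closed} becomes the conservation law $\partial_t U + \partial_x F(U) = 0$ with
\begin{equation*}
F(U) = \bigl(\, h\baru,\ h\baru^2 + \tfrac{g}{2}h^2,\ h\baru\,\sigma_1,\ \dots,\ h\baru\,\sigma_{N-1}\,\bigr)^T.
\end{equation*}

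Setting $q = h\baru$ and $m_\alpha = h\sigma_\alpha$ (so that $\baru = q/h$ and $\sigma_\alpha = m_\alpha/h$), a direct computation shows that the flux Jacobian $A(U) = \partial F/\partial U$ is block lower-triangular,
\begin{equation*}
A(U) = \begin{pmatrix} A_{\mathrm{SW}}(h,q) & 0 \\ B(U) & \baru\, I_{N-1} \end{pmatrix},
\end{equation*}
where $A_{\mathrm{SW}}$ is the classical one-layer shallow water Jacobian acting on $(h,h\baru)$. The key observation is that the pressure term $gh^2/2$ depends only on $h$, so no $m_\alpha$ enters the first two rows and the upper-right block vanishes. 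Hence the spectrum of $A(U)$ is the union of the spectra of its diagonal blocks: the classical values $\baru \pm \sqrt{gh}$ coming from $A_{\mathrm{SW}}$, and the value $\baru$ with multiplicity $N-1$ coming from $\baru\, I_{N-1}$.

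To conclude hyperbolicity I would exhibit $N+1$ independent real eigenvectors. Since $h > 0$, one has $\baru \ne \baru \pm \sqrt{gh}$, so $A_{\mathrm{SW}} - \baru I$ is invertible. For the eigenvalue $\baru$, any vector of the form $(0, v_2)^T$ with $v_2 \in \mathbb{R}^{N-1}$ solves $(A - \baru I)v = 0$, producing $N-1$ independent eigenvectors. For each $\lambda \in \{\baru \pm \sqrt{gh}\}$, the associated shallow water eigenvector $v_1$ lifts to a full eigenvector by setting $v_2 = (\lambda - \baru)^{-1} B v_1$, which is well defined since $\baru - \lambda \ne 0$. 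Collecting yields $N+1$ independent real eigenvectors, establishing hyperbolicity. The main (and quite mild) obstacle is simply verifying the block-triangular structure of $A(U)$; once that is in hand, the result reduces to the standard shallow water analysis plus a bookkeeping step for the passive-transport block.
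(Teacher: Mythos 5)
Your proof is correct and follows essentially the same route as the paper: both rely on the rewriting of Proposition~\ref{prop:rewriting_BT} to decouple the shallow water pair $(h,\baru)$ from the passively transported deviations $\sa$, after which the spectrum splits into the classical values $\baru\pm\sqrt{gh}$ and $\baru$ with multiplicity $N-1$. The only difference is that the paper writes the quasilinear form in the primitive variables $(h,\baru,\sigma_1,\dots,\sigma_{N-1})$, where the matrix is block \emph{diagonal} and diagonalizability is immediate, whereas your conservative-variable Jacobian is block lower-triangular and needs the (correctly executed) eigenvector-lifting step $v_2=(\lambda-\baru)^{-1}Bv_1$.
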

\begin{proof} 
    We study the hyperbolicity in the variables $(h, \baru, \sigma_{1}, \cdots, \sigma_{N-1})$, see~\eqref{eq:BT}. The quasilinear form of this system is
    $$
    \partial_{t} \begin{pmatrix} h \\ \baru \\ \sigma_{1} \\ \sigma_{2} \\ \vdots \\ \sigma_{N-1} \end{pmatrix}
    + \begin{pmatrix} \bar u & h & 0 & \dots & \dots & 0 \\ g & \bar u & 0 & \dots & \dots & 0 \\ 0 & 0 & \baru & 0 & \dots & 0 \\ \vdots & \vdots & 0 & \baru & \ddots& \vdots \\ \vdots & \vdots & \vdots & \ddots & \ddots & 0 \\ 0 & 0 & 0 & \dots & 0 & \baru  \end{pmatrix} \partial_{x} \begin{pmatrix} h \\ \baru \\ \sigma_{1} \\ \sigma_{2} \\ \vdots \\ \sigma_{N-1} \end{pmatrix} =  \begin{pmatrix} 0 \\ 0 \\ 0 \\ \vdots\\ \vdots \\ 0 \end{pmatrix}
    $$
    and the result immediately follows.
\end{proof}
The study of the hyperbolicity of the baroclinic system~\eqref{eq:BC} shares many similarities with the one of the original system~\eqref{eq:MSW1D}. It is known from the seminal paper~\cite{ABPSM11a} that the two-layer shallow water system is hyperbolic for any choice of interface velocity $u_{\frac12}$ lying between $u_1$ and $u_2$. For the general $N$-layer system~\eqref{eq:MSW1D} with upwind interface velocities~\eqref{eq:interu} numerical investigations indicate that there are always two extremal eigenvalues resembling $\baru \pm \sqrt{gh}$ (with a perturbation due to the shear). However "in the middle" complex eigenvalues arise in general. The same behavior arises when considering the hyperbolicity  of the free-surface Euler equations (or Benney model)~\cite{DMEHGGSM25}.

The following proposition concerns the hyperbolicity of the baroclinic part \emph{without mass exchange}. At first glance this system seems somehow artificial but:
\begin{itemize}
    \item it is very close to the multilayer semi-Lagrangian model (5.2) in~\cite{DMEHGGSM25},
    \item this system is solved in the numerical approximation of the baroclinic step.
\end{itemize}
\begin{proposition} \label{prop:eigenvalues_prediction}
    Consider the following system with the $2N$ unknowns $(\ha, \ua)_{\alpha \in \{1,\ldots,N\}}$ 
    \begin{equation} \label{eq:BCnoME}
        \begin{cases}
            \ddt{\ha} + \dx (\ha (\ua - \baru)) = 0, \\
            \dt (\ha \ua) + \dx \left( \ha \ua \left(\ua-\baru \right) \right) = 0.
        \end{cases}
    \end{equation}
    The total water height $h$ and the mean velocity $\baru$ are defined using the other unknowns with $h=\sumaN \ha$ and $\baru = \frac{1}{h} \sumaN \ha \ua$.
    
    Suppose that all the velocity deviations $\ua-\bar u$ are different and that none of them vanishes. Suppose moreover that $\sumaN \frac{\ha}{\ua - \baru} \neq 0$. Then system~\eqref{eq:BCnoME} is strictly hyperbolic with the $2N$ distinct eigenvalues 
    $$  
    \left\{ u_1 - \baru, \dots, u_N - \baru \right\} \cup \left\{ \lambda \in \R, \ h = \sumaN \dfrac{(\ua - \baru ) \ha }{\ua - \baru - \lambda} \right\}.
    $$
    Moreover, all the eigenvalues belong to the interval $[\min_{\alpha} \ua - \baru, \max_{\alpha} \ua-\baru]$.
\end{proposition}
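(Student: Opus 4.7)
The plan is to put system~\eqref{eq:BCnoME} in quasilinear form $\partial_t U + A(U)\partial_x U = 0$ with $U = (h_1,\dots,h_N,u_1,\dots,u_N)^T$ and exhibit a block upper-triangular structure that decouples the spectrum into two $N\times N$ pieces. Combining the momentum equation with the continuity equation and dividing by $\ha>0$ yields the non-conservative form
\begin{equation*}
    \partial_t \ua + (\ua - \baru)\,\partial_x \ua = 0,
\end{equation*}
in which $\baru$ enters as an algebraic coefficient and no spatial derivative of $\baru$ remains. Consequently the lower-left block of $A(U)$ (rows $\ua$, columns $\ha$) is zero and the lower-right block is diagonal with entries $\ua-\baru$. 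This immediately furnishes the $N$ eigenvalues $\ua-\baru$ (distinct by hypothesis) and reduces the remaining analysis to the upper-left block $A_{11}$ coming from the continuity equations.

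Next I would compute $A_{11}$ by substituting $\partial_x \baru = h^{-1}\sum_\beta (u_\beta-\baru)\partial_x h_\beta + h^{-1}\sum_\beta h_\beta \partial_x u_\beta$ into $\partial_t \ha + (\ua-\baru)\partial_x \ha + \ha\,\partial_x \ua - \ha\,\partial_x \baru = 0$, which gives the rank-one perturbation
\begin{equation*}
    A_{11} \;=\; \operatorname{diag}(\ua - \baru) \;-\; \tfrac{1}{h}\, a\,b^T, \qquad a_\alpha = \ha, \quad b_\alpha = \ua-\baru.
\end{equation*}
The matrix determinant lemma then yields
\begin{equation*}
    \det(A_{11} - \lambda I) \;=\; \prod_\alpha (\ua-\baru-\lambda)\left(1 - \tfrac{1}{h}\sum_\alpha \tfrac{(\ua-\baru)\ha}{\ua-\baru-\lambda}\right),
\end{equation*}
and inspecting the polynomial expansion one sees that $\lambda=\ua-\baru$ is excluded as a root (it would force $(\ua-\baru)\ha\prod_{\beta\neq\alpha}(u_\beta-\ua)=0$, impossible under the assumptions). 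So the $N$ eigenvalues of $A_{11}$ are exactly the solutions of the secular equation in the statement.

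The key algebraic observation is that $\sum_\alpha \ha(\ua-\baru)=0$ by definition of $\baru$. Using the identity $\tfrac{\ua-\baru}{\ua-\baru-\lambda} = 1 + \tfrac{\lambda}{\ua-\baru-\lambda}$, the secular equation rewrites as $\lambda\,\phi(\lambda)=0$ with $\phi(\lambda):=\sum_\alpha \tfrac{\ha}{\ua-\baru-\lambda}$. Thus $\lambda=0$ is always a root, and the remaining $N-1$ roots are the zeros of $\phi$. Ordering $v_\alpha := \ua-\baru$ so that $v_1<\cdots<v_N$, the strict monotonicity $\phi'(\lambda)=\sum_\alpha \ha/(v_\alpha-\lambda)^2>0$ and the jumps from $-\infty$ to $+\infty$ across consecutive poles give exactly one zero of $\phi$ in each open interval $(v_\alpha,v_{\alpha+1})$; outside $(v_1,v_N)$, $\phi$ stays of one sign and has no zero. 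The hypothesis $\sum_\alpha \ha/(\ua-\baru)\neq 0$ is exactly $\phi(0)\neq 0$, which separates $\lambda=0$ from the $N-1$ interlacing roots, and the relation $\sum_\alpha \ha v_\alpha=0$ forces the $v_\alpha$'s to have both signs, hence $0\in(v_1,v_N)$. Collecting everything, the $2N$ eigenvalues are distinct and all contained in $[\min_\alpha v_\alpha,\max_\alpha v_\alpha]$.

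The main obstacle I anticipate is the bookkeeping in this last step: one has to verify that the $N-1$ zeros of $\phi$, the extra root $\lambda=0$, and the $N$ values $\ua-\baru$ are pairwise distinct and all lie in $[v_1,v_N]$. The non-routine move is extracting $\lambda=0$ from the secular equation via the identity $\sum_\alpha \ha v_\alpha=0$; without it the rational function has no obvious monotonicity, and the interlacing picture that yields strict hyperbolicity and the confinement estimate does not appear.
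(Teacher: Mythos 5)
Your proof is correct and follows essentially the same route as the paper: the same block-triangular quasilinear form obtained from the transport equation on $\ua$, the same rank-one perturbation $\operatorname{diag}(\ua-\baru)-\tfrac1h H\Sigma^T$ for the height block, and the matrix determinant lemma leading to the same secular equation. The only real difference is the last step: the paper locates the $N$ roots of $\varphi(\lambda)=1-\sum_{\alpha}\sigma_{\alpha}\ell_{\alpha}/(\sigma_{\alpha}-\lambda)$ by counting sign changes across the poles together with a local analysis at $\lambda=0$ (where $\varphi$ vanishes with nonzero derivative), whereas your global factorization of the secular function as $-\lambda\,\phi(\lambda)/h$ with $\phi$ strictly increasing between consecutive poles makes the interlacing, the confinement to $[\min_{\alpha}(\ua-\baru),\max_{\alpha}(\ua-\baru)]$, and the role of the hypotheses $\sum_{\alpha}\ha/(\ua-\baru)\neq 0$ and $\ua\neq\baru$ immediate --- arguably a cleaner finish to the same argument.
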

\begin{proof}
    Combining the two equations of~\eqref{eq:BCnoME}, for regular solutions we obtain the transport equation on $\ua$, namely
    \begin{equation} 
        \ddt{\ua} + (\ua-\baru) \ddx{\ua} = 0. 
    \end{equation}
    We now work on the equation on $\ha$. With the chain rule and $\baru = \frac{\sum_{k=1}^N h_k u_k}{\sum_{k=1}^N h_k}$ we get
    $$ \begin{aligned}
    \frac{\partial}{\partial x } (\ha (\ua- \baru)) &= (\ua- \baru) \frac{\partial \ha}{\partial x } + \ha  \frac{\partial \ua}{\partial x } - \ha \frac{\partial }{\partial x } \left( \dfrac{\sum_{k=1}^N h_k u_k}{\sum_{k=1}^N h_k} \right) \\
    &=  (\ua- \baru) \frac{\partial \ha}{\partial x } + \ha  \frac{\partial \ua}{\partial x } - \dfrac{\ha}{h} \left( \sum_{k=1}^N (u_k - \baru) \frac{\partial h_k}{\partial x }  \right) - \dfrac{\ha}{h} \left( \sum_{k=1}^N h_k \frac{\partial u_k}{\partial x } \right).
    \end{aligned} $$
    We define $H = (h_1, \dots, h_N)^T$, $U = (u_1, \dots, u_N)^T$, $\sa = \ua - \baru$, $\Sigma = (\sigma_1, \dots, \sigma_N)^T$, $D_\Sigma  = \operatorname{diag}(\Sigma)$ and $L = (\ell_1, \dots, \ell_N)^T= \frac{1}{h}(h_{1}, \dots, h_{N})^{T}$. 
    Then the quasilinear linear form of~\eqref{eq:BCnoME} is
    $$
    \dt \left(\begin{array}{c} H \\ \hline  U \end{array} \right) 
    + 
    \left( \begin{array}{c|c}
        D_\Sigma - L \Sigma^T & h L L^T \\ \hline
        0 & D_\Sigma 
    \end{array} \right)
    \dx \left( \begin{array}{c} H \\ \hline  U \end{array} \right) 
    = 
    \dx \left( \begin{array}{c} 0 \\ \hline  0 \end{array}\right).
    $$
    All the $\sa$ are eigenvalues of the matrix. We look for the values of $\lambda \in \R$ such that $D_{\sigma} -L \Sigma^{T} - \lambda I $ is non invertible. As long as $\lambda \neq \sigma_{k}$ the matrix determinant lemma yields
    $$ \det(D_{\sigma} - \lambda I -L \Sigma^{T})= (1- \Sigma^T (D_{\sigma}-\lambda I)^{-1} L) \det(D_{\sigma}-\lambda I). $$
    We now prove that the following function has $N$ roots, denoted by $\lambda_{1}< \dots < \lambda_{N}$
    $$ 
     \varphi(\lambda)= 1- \Sigma^{T} (D_{\sigma}-\lambda I)^{-1}
     = 1 - \sumaN\frac{\sa \ell_{\alpha}}{\sa- \lambda}.
    $$
    The fact that $\sumaN \ha \sa = 0$ implies the existence of at least one non-positive and one non-positive $\sa$. We order the velocity deviations: for some $k \in \{1, \dots, N-1\}$ and some permutation $(p_{1}, \dots, p_{N})$ of $\{1, \dots, N\}$, 
    $$ \sigma_{p_{1}}< \cdots < \sigma_{p_{k}}<0< \sigma_{p_{k+1}}< \dots < \sigma_{p_N}.$$
    The limits
    $$ \lim_{\lambda \uparrow \sigma_{p_{i}}} \varphi(\lambda) = 
    \begin{cases}
    + \infty & \text{ if } i \leq  k\\
    - \infty & \text{ if }  i \geq  k+1
    \end{cases},
    \ 
     \lim_{\lambda \downarrow \sigma_{p_{i}}} \varphi(\lambda) = 
    \begin{cases}
    - \infty & \text{ if } i \leq  k\\
    + \infty & \text{ if }  i \geq k+1
    \end{cases}
    \ \text{ and } \
    \lim_{\lambda \to \pm \infty} \varphi(\lambda) = 1
    $$
    imply the existence of a root in between each $N-2$ consecutive pairs $(\sigma_{p_{1}}, \sigma_{p_{2}})$, \dots , $(\sigma_{p_{k-1}}, \sigma_{p_{k}})$, $(\sigma_{p_{k+1}}, \sigma_{p_{k+2}})$, \dots , $(\sigma_{p_{N-1}}, \sigma_{p_{N}})$ sharing the same sign. On the interval $[\sigma_{p_{k}}, \sigma_{p_{k+1}}]$ the function $\varphi$ has two roots. Indeed its limits on the boundaries are $-\infty$ while at $\lambda=0$ the function $\varphi$ vanishes with derivative $\varphi'(0)= \sumaN \frac{\ell_{\alpha}}{\sa}= \frac{1}{h}\sumaN \frac{\ha}{\ua-\baru} \neq 0$. Alltogether the system has $2N$ distinct eigenvalues
    $$  \sigma_{p_{1}} < \lambda_1 < \sigma_{p_{2}} < \dots <\sigma_{p_{k}} < \lambda_k< \lambda_{k+1}  < \sigma_{p_{k+1}}< \lambda_{k+2} < \sigma_{p_{k+2}} < \dots < \lambda_{N} < \sigma_{p_{N}} .$$
\end{proof}
\begin{remark}
    If $\ua-\baru=0$ for some $\alpha$ or if $\sumaN \frac{\ha}{\ua-\baru}=0$ the system is no longer strictly hyperbolic in general. The most obvious examples are the case  $\ua-\baru=0$ for all $\alpha$ with the apparition of the Jordan block $hLL^T$. In the case $N=2$, $u_1=-u$, $u_2=u$, $h_1=h_2$ the matrix reduces to
    $$ \begin{pmatrix} -\frac{u}{2} &-\frac{u}{2} & \frac{h}{4}& \frac{h}{4}\\
    \frac{u}{2}& \frac{u}{2} &  \frac{h}{4}& \frac{h}{4}\\
    0 & 0 & -u & u \\
    0 & 0 & 0 & u\end{pmatrix}$$
and $0$ is a double eigenvalue with only one eigenvector.
\end{remark}

\section{Numerical scheme} \label{S:scheme}

We propose a numerical discretization of the multilayer system~\eqref{eq:MSW1D} based on the barotropic-baroclinic splitting~\eqref{eq:BC}-\eqref{eq:BT} presented above. We use a finite volume framework and discretize the computational domain by a grid with equidistant distributed nodes $x_j$. We denote by $C_j = [x_{j-\frac12}, x_{j + \frac12}]$ the cell of length $\Delta x_j = x_{j+\frac12} - x_{j-\frac12}$ with $x_{j+\frac12} = \frac{x_j + x_{j+1}}{2}$. 
At the beginning of the time step the quantities $(\hjn,\uajn, \Tajn)$ at time $t^{n}$ are known.
They are updated at time $t^{n+1}=t^{n}+ \Dtn$. The baroclinic~\eqref{eq:BC} and the barotropic~\eqref{eq:BT} systems are discretized one after the other with a Lie splitting. More precisely,
\begin{enumerate}
    \item \emph{Baroclinic step:} System~\eqref{eq:BC} is solved on a large time step $\Dtn$. We denote the variables obtained at the end of this step with the superscript  $n+\frac12$.
    
    \item \emph{Barotropic step:} System~\eqref{eq:BT} is then solved on the time interval $[0, \Dtn]$ with $K$ small time substeps $\delta t^k$ using $\hjd$, $\uajd$ and $\Tajd$ as initial data. The last subtimestep gives the values $\hjnp$, $\uajnp$ and $\Tajnp$. It serves as the initial data for step $1$ (the baroclinic step) at the next time step.
\end{enumerate}

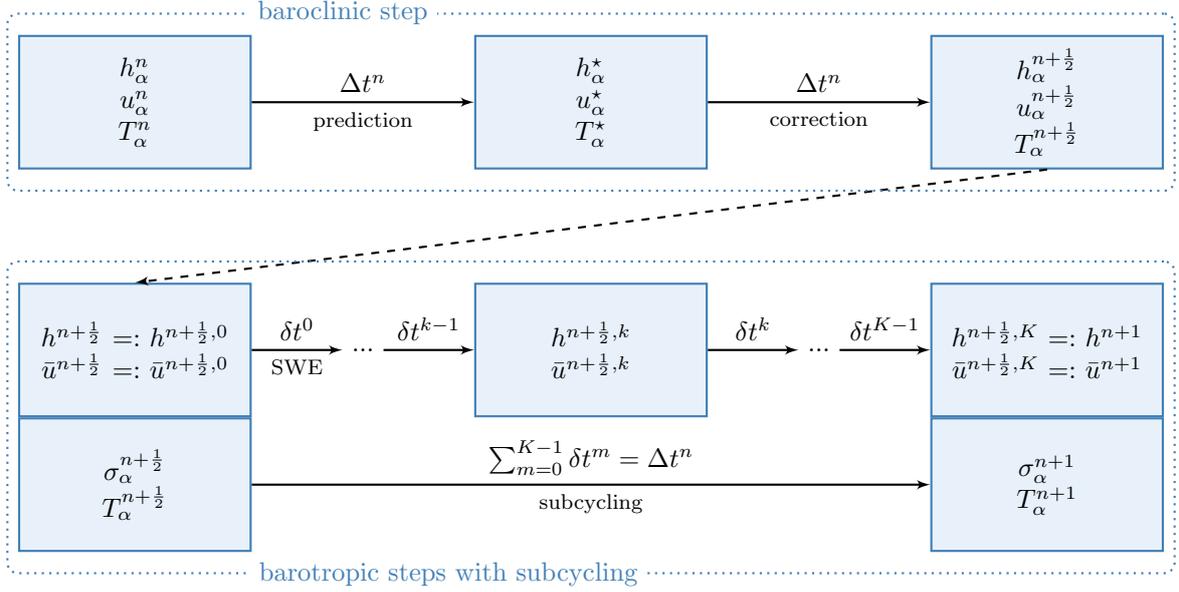
\begin{figure}[H]
    \centering
    \begin{tikzpicture}
      [auto,
       block/.style ={rectangle, draw=rwth-75, thick, fill=rwth-10,
                        text width=8em, align=center,
                        minimum height=5em},
       line/.style ={draw, thick, -latex'}]
    
      \matrix [column sep=12mm]
      {
          \node [block] (initial_data_BC) {$\han$ \\ $\uan$ \\ $\Tan$}; & & 
          \node [block] (BC_after_prediction) {$\has$ \\ $\uas$ \\ $\Tas$}; & & 
          \node [block] (BC_after_correction) {$\had$ \\ $\uad$ \\ $\Tad$}; \\ [1.5cm,between borders]
                
          \node [block] (initial_data_BT) {$h^{n+\frac12} =: h^{n+\frac12,0}$ \\ $\bar{u}^{n+\frac12} =: \bar{u}^{n+\frac12,0}$}; &
          \node (dummy) {...}; & 
          \node [block] (intermediate_BT) {$h^{n+\frac12,k}$ \\ $\bar{u}^{n+\frac12,k}$}; & 
          \node (dummy2) {...}; & 
          \node [block] (BT_final) {$h^{n+\frac12,K} =: h^{n+1}$ \\ $\bar{u}^{n+\frac12,K} =: \bar{u}^{n+1}$}; \\
          
          \node[block] (initial_data_subcycling) {$\sa^{n+\frac12}$ \\ $\Ta^{n+\frac12}$}; & & & & 
          \node[block] (subcycling_final) {$\sa^{n+1}$ \\ $\Ta^{n+1}$}; \\
      };
      \begin{scope}[every path/.style=line]
        \path (initial_data_BC)     -- (BC_after_prediction) node [above,midway] {$\Delta t^n$} node [below,midway] {\footnotesize prediction};
        \path (BC_after_prediction) -- (BC_after_correction) node [above,midway] {$\Delta t^n$} node [below,midway] {\footnotesize correction};
        
        \path [dashed] (BC_after_correction.south) -- (initial_data_BT.north);
        
        \path (initial_data_BT)     -- (dummy) node [above,midway] {$\delta t^0$} node [below,midway] {\footnotesize SWE};
        \path (dummy)               -- (intermediate_BT) node [above,midway] {$\delta t^{k-1}$};
        \path (intermediate_BT)     -- (dummy2) node [above,midway] {$\delta t^k$};
        \path (dummy2)              -- (BT_final) node [above,midway] {$\delta t^{K-1}$};
        \path (initial_data_subcycling)  -- (subcycling_final)  node [above,midway] {$\sum_{m=0}^{K-1} \delta t^m = \Delta t^n$} node [below,midway] {\footnotesize subcycling};
      \end{scope}
      
        \node[draw=rwth-75, thick, dotted, rounded corners,  inner xsep=0.4em, inner ysep=0.8em, fit=(initial_data_BC) (BC_after_correction)] (BC) {};
    	\node[fill=white,text=rwth-75] at (BC.160) {baroclinic step};
    	
    	\node[draw=rwth-75, thick, dotted, rounded corners,  inner xsep=0.4em, inner ysep=0.8em, fit=(initial_data_BT) (subcycling_final)] (BT) {};
    	\node[fill=white,text=rwth-75] at (BT.228) {barotropic steps with subcycling};
    		
    \end{tikzpicture}
    \caption{Overview of the scheme. The large baroclinic step with $\Dtn$ consists of a prediction step where system~\eqref{eq:BCnoME} is solved and a correction step where the mass exchange terms are defined and applied. The computed data of the baroclinic step $(\ha,\ua,\Ta)^{n+\frac12}$ is used as initial data for the barotropic step. We perform many small barotropic time steps $\delta t^k$ until we reach the final time $\Dtn$. During this loop we only update the vertically averaged quantities $h$ and $h\baru$. The transported variables $\sa$ and $\Ta$ are updated only once with a large time step $\Dtn$.}
    \label{fig:sketch_bb}
\end{figure}

We detail these two steps in Section~\ref{S:overview}. The timestep restrictions on the timesteps are discussed in Section~\ref{S:CFL}. We prove that both steps ensure a discrete maximum principle on the velocity and on the tracer and a discrete entropy inequality. Special choices of the numerical fluxes are discussed in Section~\ref{S:choices}.

\subsection{Overview of the scheme} \label{S:overview}
An overview of the scheme with the notation and the most important steps is sketched in Figure~\ref{fig:sketch_bb} and in Algorithm~\ref{alg:bb-splitting}. Compared to the original scheme without splitting, in which all variables are updated with a small time step $\delta t^n$, there are two advantages in terms of computational cost. Firstly, the exchange terms are only calculated in each large time step $\Dtn$ in the baroclinic step. Secondly, only the water height $h$ and the mean velocity $\baru$ are calculated with small time steps. The transported variables $\sa$ and $\Ta$, on the other hand, are updated with only one large time step $\Dtn$.

The details of the baroclinic and the barotropic steps are given in the following subsections.

\begin{algorithm}[H]
    \begin{algorithmic}[1]
        \Require Initial data $(h^0, \ua^0, \Ta^0)$, final time $t^\text{final}$
        \Ensure $(h^\text{final}, \ua^\text{final}, \Ta^\text{final})$
        \State $t \gets 0$
        \State $(h, \ua, \Ta) \gets (h^0, \ua^0, \Ta^0)$
        \While{$t < t^\text{final}$}
            \State $\Delta t \gets \min(\operatorname{computeDT\_baroclinic}(h, \ua), t^\text{final}-t)$ \Comment{see CFL conditions~\eqref{eq:CFL_prediction},~\eqref{eq:positivity_prediction},~\eqref{eq:CFLcor}}
            \State $(h, \ua, \Ta) \gets \operatorname{baroclinic}(h, \ua, \Ta, \Delta t)$ \Comment{see Section~\ref{subsec:baroclinic}}
            \State $\baru \gets \sumaN \la \ua$
            \State $\tau \gets 0$
            \While{$\tau < \Delta t$}
                \State $\delta t \gets \min(\operatorname{computeDT\_barotropic}(h, \baru), \Delta t - \tau)$ \Comment{see CFL condition~\eqref{eq:CFL_SWE}}
                \State $(h, \baru) \gets \operatorname{barotropic\_shallow\_water}(h, \baru,  \delta t)$ \Comment{see Section~\ref{subsec:barotropic}}
                \State $\tau \gets \tau + \delta t$
            \EndWhile
             \State $(\sa, \Ta) \gets \operatorname{barotropic\_adjustment}(h, \baru, \sa, \Ta, \Delta t)$ \Comment{see update~\eqref{eq:subcycling}}
            \State $\ua \gets \baru + \sa$
            \State $t \gets t + \Delta t$
        \EndWhile
        \State return $(h, \ua, \Ta)$
    \end{algorithmic}
\caption{Barotropic-baroclinic splitting. Lines $4$ to $6$ correspond to the baroclinic part~\eqref{eq:BC}. Lines $8$ to $13$ correspond to the barotropic part~\eqref{eq:BT}}\label{alg:bb-splitting}
\end{algorithm}

\subsubsection{Baroclinic step} \label{subsec:baroclinic}
This step corresponds to~\eqref{eq:BC}. At the begining of the timestep the values of $\hjn$, $\uajn$ and $ \Tajn$ are known. We follow the prediction/correction strategy of~\cite{ABS18}. 
An overview of the baroclinic step is given in Algorithm~\ref{alg:BC}.

\begin{algorithm}[H]
    \begin{algorithmic}[1]
        \Require $h^n, \ua^n, \Ta^n, \Delta t^n$
        \Ensure $h^{n+\frac12}, \ua^{n+\frac12}, \Ta^{n+\frac12}$
        \State $(h^{n+\frac12},\ha^\star, \ua^\star, \Ta^\star) \gets \operatorname{prediction}(h^n, \ua^n, \Ta^n, \Dtn)$ \Comment{see~\eqref{eq:predictionstep},~\eqref{eq:hjd}}
        \State $\Gapm^\star \gets \operatorname{exchanges}(h^{n+\frac12},\ha^\star)$ \Comment{see~\eqref{eq:def_exchange_terms}}
        \State $(\ua^{n+\frac12}, \Ta^{n+\frac12}) \gets \operatorname{correction}(h^\star,\ua^\star,\Ta^\star, \Gapm^\star, \Dtn)$ \Comment{see~\eqref{eq:correctionstep}}
        \State return $h^{n+\frac12}, \ua^{n+\frac12}, \Ta^{n+\frac12}$
    \end{algorithmic}
\caption{Baroclinic step}\label{alg:BC}
\end{algorithm}

\subsubsection*{Prediction of the total water height}
The prediction step updates the total water height $\hjn$ to $\hjd$ and defines the exchange terms in a way that the constraint $\ha= \la h$ is preserved through time. It consists in solving system~\eqref{eq:BCnoME} without mass exchanges with $2N$ unknowns studied in Proposition~\ref{prop:eigenvalues_prediction}.
We recall that in the absence of exchange terms each individual water height $\ha$ has its own evolution. 
The numerical fluxes are denoted with the letter $F$. At the end of this step we obtain 
\begin{equation} \label{eq:predictionstep}
    \begin{cases}
        \hajs = \hajn - \frac{\Dtn}{\Delta x} \left( \Fhajp - \Fhajm \right), \\
        (\ha \ua)_j^\star = (\ha \ua)_j^n - \frac{\Dtn}{\Delta x} \left( \Fhauajp - \Fhauajm \right), \\
        (\ha \Ta)_j^\star = (\ha \Ta)_j^n - \frac{\Dtn}{\Delta x} \left( \FhaTajp - \FhaTajm \right) 
    \end{cases}
\end{equation}
with $\Dtn$ being constrained by the CFL condition motivated by Proposition~\ref{prop:eigenvalues_prediction} about the eigenvalues of the baroclinic prediction step
\begin{align} \label{eq:CFL_prediction}
    \frac{\Dtn}{\Delta x} \max_{(\alpha,j)} |u_{\alpha,j}^n - \bar{u}_j^n| \leq \CFL \leq \frac{1}{2}
\end{align}
for a constant CFL-number.

Two specific choices of numerical mass flux $F_{j\pm\frac12}^{\ha}$ are discussed later on in Propositions~\ref{P:choice} and~\ref{P:Rusanov} to ensure several desirable properties of the overall scheme. On the other hand, we remark that $\ua$ and $\Ta$ are transported at speed $\sa = \ua - \baru$ and choose the numerical fluxes~\cite{AB03}
\begin{equation}\label{eq:fluxdec}
    F^{\ha \varphi_\alpha}_{j+\frac12} = \varphi_{\alpha,j}^n \pos{\Fhajp} - \varphi_{\alpha,j+1}^n \nega{\Fhajp} \quad \text{for } \varphi_\alpha \in \{ \ua, \Ta \},
\end{equation}
where we denote for any real number $a$, its positive part by $a^{+} := \max(a,0)$ and its negative part by $a^{-} :=-\min(a,0)$, thus $a = a^- - a^+$.
The total water height is defined as 
\begin{equation} \label{eq:hjd}
    \hjd = \sumaN \hajs = \hjn - \frac{\Dtn}{\Delta x} \left( \Fhjp - \Fhjm \right) \quad \text{ where } \quad \Fhjpm = \sumaN \Fhajpm.
\end{equation}

\subsubsection*{Definition of the exchange terms}
The individual water heights $\hajs$ obtained in the prediction step~\eqref{eq:predictionstep} are not proportional to each other: $\hajs \neq \la \hjd$. The exchange terms are defined to restore this constraint.

\begin{proposition}\label{P: exchanges}
    Define the exchange terms as $G_{\frac12,j}^{\star}:=0$ and for all $\alpha \in \{ 1, \dots, N-1\}$ 
    \begin{equation} \label{eq:def_exchange_terms}
        \Gapj^\star := \frac{1}{\Dtn} \left( \la h_j^{n+\frac12} - \hajs \right) + \Gamj^\star.
    \end{equation}
    Then $G_{N+\frac12,j}^{\star}:=0$ and for any  $\alpha \in \{ 1, \dots, N\}$ it holds
    \begin{equation} \label{eq:hajd}
        \hajd = \hajs + \Dtn \left(\Gapj^{\star}-\Gamj^{\star} \right) = \la \hjd.
    \end{equation}
\end{proposition}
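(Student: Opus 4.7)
The proof is essentially a direct unwinding of the recursive definition~\eqref{eq:def_exchange_terms}, so the plan is short.

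First I would establish the claim $\hajd = \la \hjd$. By construction, the defining relation~\eqref{eq:def_exchange_terms} can be rewritten as
\[
\Gapj^\star - \Gamj^\star = \frac{1}{\Dtn}\bigl(\la h_j^{n+\frac12} - \hajs\bigr),
\]
and substituting this into $\hajs + \Dtn(\Gapj^\star - \Gamj^\star)$ immediately gives the telescoping cancellation and yields $\la h_j^{n+\frac12}$, which is the desired identity.

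Next I would verify the boundary condition $G_{N+\frac12,j}^\star = 0$. Starting from $G_{\frac12,j}^\star = 0$ and telescoping the recursion from $\alpha=1$ to $\alpha=N$,
\[
G_{N+\frac12,j}^\star = G_{\frac12,j}^\star + \sum_{\alpha=1}^{N}\bigl(\Gapj^\star - \Gamj^\star\bigr) = \frac{1}{\Dtn}\sum_{\alpha=1}^{N}\bigl(\la h_j^{n+\frac12} - \hajs\bigr).
\]
I would then invoke the two structural facts: $\sumaN \la = 1$ (by hypothesis on the weights) and $\sumaN \hajs = \hjd$ (which is exactly equation~\eqref{eq:hjd}). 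Both terms on the right equal $\hjd$ and cancel, giving $G_{N+\frac12,j}^\star = 0$ as required.

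There is no real obstacle here: the content of the proposition is that the naive recursion, initialized at the bottom with $G_{\frac12,j}^\star=0$, automatically satisfies the top no-flux condition $G_{N+\frac12,j}^\star=0$. The only ingredient beyond bookkeeping is the consistency of the mass flux splitting in~\eqref{eq:hjd}, which ensures the telescoping sum of the $\la h_j^{n+\frac12} - \hajs$ vanishes. All quantities $\hajd$ are then equal to $\la \hjd$ by the first computation, so the constraint $\ha = \la h$ is exactly restored at the intermediate time.
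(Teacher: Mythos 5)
Your proof is correct and follows exactly the same route as the paper: the identity $\hajd=\la\hjd$ falls out of substituting the definition of $\Gapj^\star-\Gamj^\star$, and the top boundary condition $G_{N+\frac12,j}^\star=0$ is obtained by telescoping the recursion and invoking $\sumaN\la=1$ together with $\sumaN\hajs=\hjd$ from~\eqref{eq:hjd}. Nothing to add.
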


\begin{proof}
    Equality~\eqref{eq:hajd} is straightforwardly deduced from Definition~\eqref{eq:def_exchange_terms}. Definition~\eqref{eq:hjd} yields
    $$ \Dtn G_{N+\frac12,j}^{\star} = \sumaN \left( \la \hjd-\hajs \right) + \Dtn G_{\frac12,j}^{\star} = \hjd - \sumaN \hajs = 0. $$
\end{proof}

\subsubsection*{Correction step} 
The exchange terms defined in Proposition~\ref{P: exchanges} are applied to correct the velocities and tracer values:
\begin{equation} \label{eq:correctionstep}
    (\ha \phia)_j^{n+\frac12} = (\ha \phia)_j^\star + \Dtn \left( \varphi_{\alpha+\frac12,j}^{\sharp} \Gapj^{\star} - \varphi_{\alpha-\frac12,j}^{\sharp} \Gamj^{\star}\right) \qquad  \varphi \in \{ u, T \},
\end{equation}
with either $\sharp=\star$ (explicit scheme) or $\sharp=n+\frac12$ (implicit scheme).
The interface velocities and tracers are defined in accordance with~\eqref{eq:interu} by
\begin{equation} \label{eq:decu}
    \varphi_{\alpha+\frac12,j}^{\sharp} =
    \begin{cases}
        \varphi_{\alpha,j}^{\sharp} & \mbox{if } \;\Gapj^{\star} \leq 0,\\
        \varphi_{\alpha+1,j}^{\sharp} & \mbox{if } \;\Gapj^{\star} > 0,
    \end{cases} \qquad \varphi \in \{ u, T \}.
\end{equation}

\subsubsection{Barotropic step} \label{subsec:barotropic}

The barotropic step corresponds to system~\eqref{eq:BT}.
Its first two lines  correspond to the shallow water equations on $(h, h \baru)$ and have a severe CFL restriction on the timestep. Thus the timestep $\Dtn$ set in the baroclinic step is decomposed in $K$ small subtimesteps. We initialize the barotropic step with
$$h_{j}^{n+\frac12,0} := \hjd, \quad \baru_j^{n+\frac12,0}:= \sumaN \la \uajd  \quad \text{ and } \quad \sajd:=(\ua- \baru)_j^{n+\frac12}.$$
For $0 \leq k \leq K-1$, the $k$-th time subtimestep updates the values $(\hjdk, \barujdk)$ at time $\tau^{k}$ to the values $(\hjdkp, \barujdkp)$ at time $\tau^{k+1} = \tau^k + \dtk$. 
At the last subtimestep $K$ we have $\sum_{k=0}^{K-1} \dtk=\tau^{K}=\Dtn$ and we define $h_{j}^{n+1} := h_{j}^{n+\frac12,K}$, $\baru_{\alpha,j}^{n+1} := \baru_{\alpha,j}^{n+\frac12,K}$. The deviation $\sa$ and the tracer $\Ta$ are updated directly from $(\sajd,\Tajd)$ to $(\sajnp,\Tajnp)$. 

The shallow water numerical fluxes for the density $h$ and the mean momentum $h \baru$ at interfaces $j\pm\frac12$ are denoted respectively with $\fhkjpm$ and $\fhukjpm$ and the discretization of the topography source term with $\Sjndk$. The scheme reads as 
\begin{equation} \label{eq:FV-SW}
    \begin{cases}
      \hjdkp = \hjdk - \frac{\dtk}{\Delta x} (\fhkjp-\fhkjm), \\
      (h \baru)_j^{n+\frac12,k+1} = (h \baru)_j^{n+\frac12,k} - \frac{\dtk}{\Delta x} (\fhukjp-\fhukjm) + \dtk \Sjndk, \\
    \end{cases}
\end{equation}
with timestep $\dtk$ being constrained by a classical CFL condition of the form
\begin{equation} \label{eq:CFL_SWE}
    \dtk \leq \frac{\CFL \Delta x}{\max_j |\barujdk|+ \sqrt{g \hjdk } } 
\end{equation}
for some constant Courant--Friedrichs--Lewy number $\CFL$ depending on the scheme.

The simplest method would be to update the remaining unknowns $\Ta$ and $\sa$ as frequently as $h$ and $\baru$ with

\begin{equation*} \label{eq:nosubcycling}
    \begin{cases}
      (h \sa)_j^{n+\frac12,k+1} = (h \sa)_j^{n+\frac12,k} - \frac{\dtk}{\Delta x} (\fhsakjp-\fhsakjm), \\
      (h \Ta)_j^{n+\frac12,k+1} = (h \Ta)_j^{n+\frac12,k} - \frac{\dtk}{\Delta x} (\fhTakjp-\fhTakjm). 
    \end{cases}
\end{equation*}

However it is preferable both from the computational cost point of view and to reduce the numerical diffusion to take advantage of the fact that equations on $h \Ta$ and $h \sa$ are decoupled from the ones on $(h, \baru)$ in~\eqref{eq:BT}. The mass fluxes of all the subtimesteps are gathered in 
\begin{equation} \label{eq:summed_massflux}
    \Dtn \fhjp := \sum_{k=0}^{K-1} \dtk \fhkjp \ \text{ so that } \ \hjnp = \hjd - \dfrac{\Dtn}{\Delta x} \left( \fhjp - \fhjm \right). 
\end{equation} 

We decenter the numerical fluxes as before with
\begin{equation} \label{eq:fluxdecBT}
    \mathcal{F}_{j + \frac12}^{h \varphi_\alpha} :=  \varphi_{\alpha,j}^{n+\frac12} \pos{\fhjp} - \varphi_{\alpha,j+1}^{n+\frac12} \nega{\fhjp}
    \quad \text{for} \quad \varphi_\alpha \in \{ \sa, \Ta \}
\end{equation} 
and update $\sa$ and $\Ta$ with only one large timestep $\Dtn$
\begin{equation} \label{eq:subcycling}
    \begin{cases}
      (h \sa)_j^{n+1} = (h \sa)_j^{n+\frac12} - \frac{\Dtn}{\Delta x} (\fhsajp-\fhsajm), \\
      (h \Ta)_j^{n+1} = (h \Ta)_j^{n+\frac12} - \frac{\Dtn}{\Delta x} (\fhTajp-\fhTajm). 
    \end{cases}
\end{equation}

\begin{algorithm}[htbp]
    \begin{algorithmic}[1]
        \Require Initial data for barotropic loop $(h, \ua, \Ta)^{n+\frac12}$, final time for barotropic loop $\Dtn$
        \Ensure $(h, \ua, \Ta)^{n+1}$
        \State $\tau \gets 0$
        \State $\Delta t \mathcal{F}^h \gets 0$
        \State $(h,\baru,\sa,\Ta) \gets \left(h^{n+\frac12}, \sumaN \la \ua^{n+\frac12}, \ua^{n+\frac12}-\baru^{n+\frac12}, \Ta^{n+\frac12} \right)$
        \While{$t < \Dtn$}
            \State $\delta t \gets \min(\operatorname{computeDT\_barotropic}(h, \baru), \Dtn - \tau)$ \Comment{see~\eqref{eq:CFL_SWE}}
            \State $(h, \baru, f^h) \gets \operatorname{SWE\_solver}(h, \baru, \delta t)$ \Comment{see~\eqref{eq:FV-SW}}
            \State $\Delta t \mathcal{F}^h \gets \Delta t \mathcal{F}^h + \delta t f^h$ \Comment{see~\eqref{eq:summed_massflux}}
            \State $\tau \gets \tau + \delta t$
        \EndWhile 
        \State $(\sa,\Ta) \gets \operatorname{update\_transported\_quantities} (\sa,\Ta,\Delta t \mathcal{F}^h)$ \Comment{see~\eqref{eq:subcycling}}
        \State return $(h, \ua= \sa+\baru,\Ta)$
    \end{algorithmic}
\caption{Barotropic loop with subcycling strategy}\label{alg:BT-subcycling}
\end{algorithm}

The strategy is detailed in Algorithm~\ref{alg:BT-subcycling}. A condition on $\Dtn$ ensuring the maximum principle on $\sa$ and $\Ta$ is given in Proposition~\ref{P:maxBT} below. We observe in practice that this condition is always fulfilled for the baroclinic timestep $\Dtn$, see Section~\ref{S:testcases}. For the sake of completeness, we however describe a strategy in Section~\ref{S:propBT} handling all the cases.

\subsection{CFL conditions and properties of the scheme} \label{S:CFL}

In this section, we examine the additional time step restrictions and properties of the barotropic-baroclinic splitting. We focus on the positivity of the water heights, the discrete maximum principle for the tracer and the obtention of a discrete entropy inequality.

\subsubsection{Properties of the baroclinic step}
We exhibit two timestep restrictions that guarantee the non-negativity of the water heights and a maximum principle on $\ua$ and $\Ta$ in the baroclinic step.
\begin{proposition} \label{prop:maxprincipleprediction}
    Consider the prediction step~\eqref{eq:predictionstep} with the decentered choice~\eqref{eq:fluxdec} for the momentum flux. Suppose that the time step $\Dtn$ is small enough so that for all $\alpha$ and for all $j$
    \begin{equation} \label{eq:positivity_prediction}
        \hajn - \frac{\Dtn}{\Delta x} \left( \pos{\Fhajp} + \nega{\Fhajm} \right) \geq 0.
    \end{equation}
    Then the water heights $\hajs$ are non-negative and a discrete maximum principle on $\ua$ and $\Ta$ holds:
    $$ \min(\varphi_{\alpha, j-1}^{n}, \varphi_{\alpha, j}^{n}, \varphi_{\alpha,  j+1}^{n})  \leq \varphi_{\alpha,  j}^{\star} \leq \max(\varphi_{\alpha, j-1}^{n}, \varphi_{\alpha,  j}^{n}, \varphi_{\alpha,  j+1}^{n})  \qquad \forall \varphi \in \{ u, T\}.$$
\end{proposition}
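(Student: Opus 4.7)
The plan is to recognize the prediction step with the decentered flux~\eqref{eq:fluxdec} as a convex combination argument, in the spirit of an upwind scheme on a transport equation with velocity $\sa = \ua - \baru$. First I would rewrite the mass update by splitting the numerical mass flux into its positive and negative parts, $\Fhajpm = \pos{\Fhajpm} - \nega{\Fhajpm}$, so that
\begin{equation*}
    \hajs = \left[ \hajn - \frac{\Dtn}{\Delta x}\bigl(\pos{\Fhajp} + \nega{\Fhajm}\bigr) \right] + \frac{\Dtn}{\Delta x}\bigl(\nega{\Fhajp} + \pos{\Fhajm}\bigr).
\end{equation*}
The bracketed term is non-negative by the CFL assumption~\eqref{eq:positivity_prediction}, and the remaining contribution is non-negative as a sum of positive/negative parts; this gives $\hajs \geq 0$.

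Next, for the momentum update, I substitute the decentered definition $\Fhauajpm = \varphi_{\alpha,\cdot}^n\pos{\Fhajpm} - \varphi_{\alpha,\cdot+1}^n\nega{\Fhajpm}$ (with $\varphi=u$) into~\eqref{eq:predictionstep}. After reorganizing, the right-hand side becomes
\begin{equation*}
    (\ha \ua)_j^\star = \uajn\!\left[\hajn - \frac{\Dtn}{\Delta x}\bigl(\pos{\Fhajp} + \nega{\Fhajm}\bigr)\right] + \frac{\Dtn}{\Delta x}\,u_{\alpha,j+1}^n \nega{\Fhajp} + \frac{\Dtn}{\Delta x}\,\uajmn\pos{\Fhajm}.
\end{equation*}
The three coefficients are exactly the three non-negative terms whose sum is $\hajs$ from the rewriting above. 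Hence, whenever $\hajs > 0$, the quantity $\uajs = (\ha \ua)_j^\star/\hajs$ is a genuine convex combination of $\uajmn$, $\uajn$ and $u_{\alpha,j+1}^n$, which yields the claimed local maximum principle. The identical manipulation with $\varphi=T$ gives the bound on $\Tajs$.

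The only point requiring slight care is the degenerate case $\hajs = 0$: then necessarily all three non-negative coefficients vanish, forcing $(\ha \ua)_j^\star = (\ha \Ta)_j^\star = 0$ as well, so $\uajs$ and $\Tajs$ are conventionally undefined and the maximum principle is vacuous for that cell. Beyond this small bookkeeping issue, the argument is purely algebraic, so I do not anticipate any real obstacle; the core observation is simply that the decentered flux~\eqref{eq:fluxdec} is the standard upwind flux for the transported quantities, and the CFL condition~\eqref{eq:positivity_prediction} is exactly what is needed for the convex combination coefficients to be non-negative.
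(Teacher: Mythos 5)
Your argument is correct and is essentially the paper's own proof: the same splitting of the flux into positive and negative parts gives positivity of $\hajs$, and the same regrouping of the decentered momentum/tracer update exhibits $(\ha\phia)_j^\star$ as a non-negative combination of $\varphi_{\alpha,j-1}^n$, $\varphi_{\alpha,j}^n$, $\varphi_{\alpha,j+1}^n$ with weights summing to $\hajs$, which the CFL condition~\eqref{eq:positivity_prediction} makes admissible. Your remark on the degenerate case $\hajs=0$ is a small but welcome addition that the paper leaves implicit.
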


\begin{proof}
    We reproduce the proof of~\cite[Theorem 4.3]{AB03}. The positivity of the water height is easy:
    \begin{align*}
        \hajs &=\hajn- \frac{\Dtn}{\Delta x} \left( \Fhajp - \Fhajm \right) \\
        &= \hajn- \frac{\Dtn}{\Delta x} \left( \pos{\Fhajp} - \nega{\Fhajp} - \pos{\Fhajm} + \nega{\Fhajm} \right) \\
        & \geq  \hajn- \frac{\Dtn}{\Delta x} \left( \pos{\Fhajp} + \nega{\Fhajm} \right)
    \end{align*}
    which is non-negative under hypothesis~\eqref{eq:positivity_prediction}.
    We now prove the lower bound for $\varphi \in \{u, T\}$ updated with~\eqref{eq:fluxdec}
    \begin{align*}
        (h \varphi )_{\alpha,j}^{\star} &= (h\varphi)_{\alpha,j}^{n} - \frac{\Dtn}{\Delta x} \left[
        \varphi_{\alpha,j}^{n} \pos{\Fhajp} - \varphi_{\alpha,j+1}^{n} \nega{\Fhajp} -  \varphi_{\alpha,j-1}^{n} \pos{\Fhajm} + \varphi_{\alpha,j}^{n} \nega{\Fhajm} \right]\\
        & = \varphi_{\alpha, j}^{n} \left[\hajn- \frac{\Dtn}{\Delta x} \left( \pos{\Fhajp} - \nega{\Fhajm} \right) \right]  
         + \varphi_{\alpha,j+1}^{n} \frac{\Dtn}{\Delta x} \nega{\Fhajp} 
         + \varphi_{\alpha,j-1}^{n} \frac{\Dtn}{\Delta x} \pos{\Fhajm}   \\
        & \geq  \min(\varphi_{\alpha,j-1}^{n}, \varphi_{\alpha,j}^{n}, \varphi_{\alpha,j+1}^{n}) \left( \hajn - \frac{\Dtn}{\Delta x} \left[ \pos{\Fhajp} - \nega{\Fhajp} - \pos{\Fhajm} + \nega{\Fhajm} \right] \right)\\
        & = \min(\varphi_{\alpha,j-1}^{n}, \varphi_{\alpha,j}^{n}, \varphi_{\alpha, j+1}^{n}) \hajs.
    \end{align*}
    The inequality holds because the coefficient in front of $\varphi_{\alpha, j}^{n}$ is non-negative as soon as~\eqref{eq:positivity_prediction} holds. The upper bound is proved in a similar fashion.
\end{proof}

\begin{remark} 
    If condition~\eqref{eq:positivity_prediction} should be violated, it can be enforced by modifying the mass fluxes using the draining time technique \cite{BollermannLukacovaNoelle2011}. This sets the mass flux to zero when the water height reaches zero within a time step. Note that it does not reduce the overall time step, but is an improved quadrature rule for the time integral of the numerical flux. For the terrain-following coordinates, this will only happen at wet-dry fronts.
\end{remark}

\begin{proposition} \label{P:sumsa}
    The correction on the velocities and the tracers~\eqref{eq:correctionstep} with the decentered choice~\eqref{eq:decu} for the interface velocities verifies the maximum principle 
    $$ \min_{\beta}(\varphi_{\beta,j}^{\star})  \leq \varphi_{\alpha,j}^{n+\frac12} \leq \max_{\beta}(\varphi_{\beta,j}^{\star}), \qquad \varphi_\alpha \in \{ u, T \} $$
    for any time step $\Dtn$ in the implicit case $\sharp=n+\frac12$.
    In the explicit case $\sharp=\star$ it verifies the maximum principle
    $$ \min(\varphi_{\alpha-1,j}^{\star}, \varphi_{\alpha,j}^{\star}, \varphi_{\alpha+1,j}^{\star})  \leq \varphi_{\alpha,j}^{n+\frac12} \leq \max(\varphi_{\alpha-1,j}^{\star}, \varphi_{\alpha,j}^{\star},  \varphi_{\alpha+1,j}^{\star}) $$
    if the time step satisfies the bound
    \begin{equation} \label{eq:CFLcor}
     \Dtn \left(\Gapj^{\star}\right)^-  + \Dtn   \left(\Gamj^{\star}\right)^+  \leq \hajd. 
    \end{equation}
\end{proposition}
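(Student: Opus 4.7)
The plan is to rewrite~\eqref{eq:correctionstep} cellwise so that $\varphi_{\alpha,j}^{n+\frac12}$ appears as an average of a few neighbouring values. The key algebraic ingredient is the decomposition of the upwind fluxes~\eqref{eq:decu} into positive and negative parts, valid for any $\sharp$:
\begin{align*}
    \varphi_{\alpha+\frac12,j}^{\sharp}\,\Gapj^{\star} &= \varphi_{\alpha+1,j}^{\sharp}\,(\Gapj^{\star})^{+} - \varphi_{\alpha,j}^{\sharp}\,(\Gapj^{\star})^{-}, \\
    \varphi_{\alpha-\frac12,j}^{\sharp}\,\Gamj^{\star} &= \varphi_{\alpha,j}^{\sharp}\,(\Gamj^{\star})^{+} - \varphi_{\alpha-1,j}^{\sharp}\,(\Gamj^{\star})^{-}.
\end{align*}
Substituting into~\eqref{eq:correctionstep} gives a three-point stencil in the vertical index~$\alpha$, and the boundary conditions $G_{\frac12,j}^{\star}=G_{N+\frac12,j}^{\star}=0$ ensure that the missing neighbours at $\alpha=1$ and $\alpha=N$ drop out automatically.

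For the explicit choice $\sharp=\star$, I would collect the $\varphi_{\alpha-1,j}^{\star}, \varphi_{\alpha,j}^{\star}, \varphi_{\alpha+1,j}^{\star}$ terms and, dividing by $\hajd$, express $\varphi_{\alpha,j}^{n+\frac12}$ as a linear combination of these three values. Two of the coefficients, $\Dtn(\Gapj^{\star})^{+}$ and $\Dtn(\Gamj^{\star})^{-}$, are manifestly non-negative; the coefficient of $\varphi_{\alpha,j}^{\star}$ is $\hajs-\Dtn(\Gapj^{\star})^{-}-\Dtn(\Gamj^{\star})^{+}$. Using~\eqref{eq:hajd} to substitute $\hajs=\hajd-\Dtn(\Gapj^{\star}-\Gamj^{\star})$, one checks at the same time that the three coefficients sum to exactly $\hajd$ and that the non-negativity of the central coefficient is equivalent to the CFL bound~\eqref{eq:CFLcor}. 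Thus $\varphi_{\alpha,j}^{n+\frac12}$ is a convex combination of its three vertical neighbours at time $\star$, and both inequalities follow.

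For the implicit choice $\sharp=n+\frac12$, the same expansion yields instead
\[
\bigl[\hajd+\Dtn(\Gapj^{\star})^{-}+\Dtn(\Gamj^{\star})^{+}\bigr]\varphi_{\alpha,j}^{n+\frac12}=\hajs\,\varphi_{\alpha,j}^{\star}+\Dtn(\Gapj^{\star})^{+}\varphi_{\alpha+1,j}^{n+\frac12}+\Dtn(\Gamj^{\star})^{-}\varphi_{\alpha-1,j}^{n+\frac12},
\]
a tridiagonal M-matrix-type relation whose coefficients are all non-negative irrespective of $\Dtn$. I would then run a discrete minimum principle argument: pick $\alpha_{0}$ minimising $\varphi_{\beta,j}^{n+\frac12}$ over $\beta$, bound $\varphi_{\alpha_{0}\pm 1,j}^{n+\frac12}\geq\varphi_{\alpha_{0},j}^{n+\frac12}$ on the right-hand side, and simplify via~\eqref{eq:hajd}; the factor in front of $\varphi_{\alpha_{0},j}^{n+\frac12}$ collapses to $\hajs$, leaving $\hajs\,\varphi_{\alpha_{0},j}^{n+\frac12}\geq\hajs\,\varphi_{\alpha_{0},j}^{\star}$, hence $\varphi_{\alpha_{0},j}^{n+\frac12}\geq\min_{\beta}\varphi_{\beta,j}^{\star}$. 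The upper bound follows by the symmetric argument at the maximising index.

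The main obstacle is purely a bookkeeping one: keeping the signs of $(\Gapm)^{\pm}$ and the corresponding upwind selections straight when unfolding~\eqref{eq:decu}. Once this is in place, the explicit case is an exercise in convex combinations and the implicit case reduces to a standard M-matrix maximum-principle argument, so no additional technical machinery is required.
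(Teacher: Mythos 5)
Your approach is the paper's: the explicit case is exactly the convex-combination argument of Proposition~\ref{prop:maxprincipleprediction} transposed to the vertical index (the paper's proof literally says ``with $\alpha$ playing the role of $j$ and $G^{\star}$ the role of $F^{\ha}$''), and for the implicit case the paper only cites \cite[Lemma 3.11]{ABS18}, so your self-contained M-matrix/extremum-index argument is a fleshing-out rather than a divergence. Your upwind decomposition of $\varphi_{\alpha\pm\frac12,j}^{\sharp}\,G_{\alpha\pm\frac12,j}^{\star}$ is correct, the three coefficients do sum to $\hajd$ via~\eqref{eq:hajd}, and in the implicit extremum argument the prefactor does collapse to $\hajs$ as you claim, so the lower and upper bounds follow.

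One assertion does not check out as written. The central coefficient in the explicit case is
$$\hajs - \Dtn\left(\Gapj^{\star}\right)^{-} - \Dtn\left(\Gamj^{\star}\right)^{+} \;=\; \hajd - \Dtn\left(\Gapj^{\star}\right)^{+} - \Dtn\left(\Gamj^{\star}\right)^{-},$$
so its non-negativity is equivalent to $\Dtn\left(\Gapj^{\star}\right)^{-} + \Dtn\left(\Gamj^{\star}\right)^{+} \leq \hajs$, or equivalently to $\Dtn\left(\Gapj^{\star}\right)^{+} + \Dtn\left(\Gamj^{\star}\right)^{-} \leq \hajd$ --- not to~\eqref{eq:CFLcor} as printed, which pairs the superscripts $-,+$ with $\hajd$. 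You assert the equivalence without carrying out the substitution; doing it reveals the mismatch. (The $\hajs$-version is also exactly what is needed for the bracket terms in Lemma~\ref{L:cor}, so the discrepancy very likely originates in a superscript slip in~\eqref{eq:CFLcor} itself; still, your proof should state the condition it actually establishes rather than declare it identical to the printed one.)
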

\begin{proof}
    We refer the reader to~\cite[Lemma 3.11]{ABS18} for the implicit case.
    For the explicit case the proof is the same as in Proposition~\ref{prop:maxprincipleprediction}, with $\alpha$ playing the role of $j$ and $G^{\star}$ the role of $F^{\ha}$.
\end{proof}

\subsubsection{Properties of the barotropic step} \label{S:propBT}
We first check that the weighted sum of the deviation $\sumaN \la \sa= \sumaN \la(\ua-\baru)$ remains null during the entire barotropic step.
\begin{proposition} \label{P:sumsigma}
    The decentered choice~\eqref{eq:fluxdecBT} for the numerical flux ensures that $\sumaN \la \sajnp =0$ holds for all $j$ and for all $n$.
\end{proposition}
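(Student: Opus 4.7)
The plan is to propagate the identity $\sum_\alpha \la \sigma_{\alpha,j} = 0$ from time $n+\frac12$ to time $n+1$ through the update~\eqref{eq:subcycling}, exploiting the fact that the decentered flux~\eqref{eq:fluxdecBT} is linear in $\sigma_\alpha$ with coefficients $(\fhjp)^\pm$ that are the same for every layer.

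First I would check the initial condition. At the beginning of the barotropic loop we set $\baru_j^{n+\frac12} := \sumaN \la u_{\alpha,j}^{n+\frac12}$ and $\sigma_{\alpha,j}^{n+\frac12} := u_{\alpha,j}^{n+\frac12} - \baru_j^{n+\frac12}$, so by construction
\begin{equation*}
    \sumaN \la \sigma_{\alpha,j}^{n+\frac12} = \sumaN \la u_{\alpha,j}^{n+\frac12} - \baru_j^{n+\frac12} \sumaN \la = 0,
\end{equation*}
since $\sumaN \la = 1$. Next I would apply this identity at both cells $j$ and $j+1$ to the decentered flux:
\begin{equation*}
    \sumaN \la \, \mathcal{F}^{h\sigma_\alpha}_{j+\frac12} = \pos{\fhjp} \sumaN \la \sigma_{\alpha,j}^{n+\frac12} \; - \; \nega{\fhjp} \sumaN \la \sigma_{\alpha,j+1}^{n+\frac12} = 0,
\end{equation*}
and the same identity at every interface $j-\frac12$.

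Finally, I would multiply the update~\eqref{eq:subcycling} by $\la$, sum over $\alpha$, and use the two vanishing sums just established:
\begin{equation*}
    h_j^{n+1} \sumaN \la \sigma_{\alpha,j}^{n+1} = h_j^{n+\frac12} \sumaN \la \sigma_{\alpha,j}^{n+\frac12} - \frac{\Dtn}{\Delta x} \left( \sumaN \la\, \mathcal{F}^{h \sa}_{j+\frac12} - \sumaN \la\, \mathcal{F}^{h \sa}_{j-\frac12} \right) = 0.
\end{equation*}
Dividing by $h_j^{n+1}$, which is positive as long as the shallow water solver of Section~\ref{subsec:barotropic} preserves positivity of the water height, yields the claim.

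The argument is essentially a bookkeeping exercise and I do not expect any real obstacle: the only point worth emphasizing is the commutation between the weighted sum $\sumaN \la (\cdot)$ and the upwind flux construction, which is precisely what the layer-independent sign split $(\fhjp)^\pm$ provides. The result would fail for a flux that depends non-linearly on $\sigma_\alpha$ or whose decentering switch involves layer-specific speeds.
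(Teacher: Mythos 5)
Your proof is correct and follows essentially the same route as the paper: the weighted sum vanishes initially by construction, the layer-independent sign split of $\fhjp$ makes the weighted flux sum vanish, and summing the update~\eqref{eq:subcycling} weighted by $\la$ gives the claim. Your extra remark about dividing by $h_j^{n+1}$ (hence needing positivity of the water height) is a small point the paper leaves implicit, but it does not change the argument.
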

\begin{proof}
   At the beginning of the barotropic step we have $\sumaN \la \sajd =0$. Then
   $$ 
   \sumaN \la \mathcal{F}_{j + \frac12}^{h \sigma_\alpha}=  \pos{\fhjp} \sumaN \la \sigma_{\alpha,j}^{n+\frac12}  -  \nega{\fhjp} \sumaN \la \sigma_{\alpha,j+1}^{n+\frac12} =0 
   $$
   and the result follows by summing~\eqref{eq:subcycling} over all the layers.
\end{proof}

Second, we proof the non-negativity of the water height $\hjnp$ and a maximum principle for the velocity and the tracer.

\begin{proposition} \label{P:maxBT}
    Suppose that
    \begin{equation} \label{eq:non-negativity}
        h_j^{n+\frac12} - \frac{\Dtn}{\Delta x} \left( \pos{\fhjp} - \nega{\fhjm} \right) \geq 0
    \end{equation}
    holds. Then $\hjnp$ is non-negative and the following maximum principle on $\varphi \in \{ \Ta, \sa\}$ obtained with~\eqref{eq:subcycling} holds:
    \begin{equation*}
        \min \left( \varphi_{j-1}^{n+\frac12}, \varphi_{j}^{n+\frac12}, \varphi_{j+1}^{n+\frac12} \right) 
        \leq \varphi_j^{n+1} 
        \leq \max \left( \varphi_{j-1}^{n+\frac12}, \varphi_{j}^{n+\frac12}, \varphi_{j+1}^{n+\frac12} \right).
    \end{equation*}
\end{proposition}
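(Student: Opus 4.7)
The plan is to recognise that once the barotropic subcycling is packaged into the accumulated mass flux~\eqref{eq:summed_massflux}, the updates of $h$ and of $(h\varphi)_j$ for $\varphi \in \{\sa,\Ta\}$ in~\eqref{eq:subcycling} form a single-step upwind finite volume scheme with time step $\Dtn$ that is formally identical to the prediction step~\eqref{eq:predictionstep} with the decentered flux~\eqref{eq:fluxdec}. Consequently the argument of Proposition~\ref{prop:maxprincipleprediction} transfers almost verbatim, with the accumulated flux $\fhjpm$ playing the role of $\Fhajpm$ and~\eqref{eq:non-negativity} playing the role of~\eqref{eq:positivity_prediction}.

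For the non-negativity of $\hjnp$ I would split $\fhjpm = \pos{\fhjpm} - \nega{\fhjpm}$ in~\eqref{eq:summed_massflux}, drop the two non-negative contributions $\nega{\fhjp}$ and $\pos{\fhjm}$ to obtain the lower bound
\[
\hjnp \;\geq\; \hjd - \frac{\Dtn}{\Delta x}\bigl(\pos{\fhjp} + \nega{\fhjm}\bigr),
\]
and conclude directly from~\eqref{eq:non-negativity}.

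For the maximum principle I would expand $(h\varphi)_j^{n+1}$ using the decentered flux~\eqref{eq:fluxdecBT} and regroup by the three stencil values:
\[
(h\varphi)_j^{n+1} = \varphi_j^{n+\frac12}\,A_j + \varphi_{j-1}^{n+\frac12}\,B_j + \varphi_{j+1}^{n+\frac12}\,C_j,
\]
with $A_j := \hjd - \frac{\Dtn}{\Delta x}\bigl(\pos{\fhjp} + \nega{\fhjm}\bigr)$, $B_j := \frac{\Dtn}{\Delta x}\pos{\fhjm}$ and $C_j := \frac{\Dtn}{\Delta x}\nega{\fhjp}$. Under~\eqref{eq:non-negativity} all three coefficients are non-negative, and a direct algebraic manipulation, using $\fhjpm = \pos{\fhjpm} - \nega{\fhjpm}$ and~\eqref{eq:summed_massflux}, shows $A_j + B_j + C_j = \hjnp$. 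Dividing by $\hjnp$ (the claim being vacuous if $\hjnp = 0$) expresses $\varphi_j^{n+1}$ as a convex combination of $\varphi_{j-1}^{n+\frac12}$, $\varphi_j^{n+\frac12}$, $\varphi_{j+1}^{n+\frac12}$, which yields both the upper and the lower bound.

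I do not foresee any real obstacle; the only conceptual subtlety is that the maximum principle is obtained on the large baroclinic step $\Dtn$ even though each shallow water substep $\dtk$ only obeys~\eqref{eq:CFL_SWE}. The point is that~\eqref{eq:non-negativity} is a CFL-type condition on the accumulated flux rather than on any single substep, which is why it may fail in principle and has to be enforced by the additional strategy deferred to the rest of Section~\ref{S:propBT}.
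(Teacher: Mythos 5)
Your proof is correct and is exactly the paper's argument: the paper's proof of this proposition simply states that it is the same as for Proposition~\ref{prop:maxprincipleprediction}, i.e.\ the upwind convex-combination argument you spell out, with the accumulated flux $\fhjpm$ in place of $\Fhajpm$. The only caveat is that~\eqref{eq:non-negativity} as printed has a minus sign where the analogous condition~\eqref{eq:positivity_prediction} and the check in Algorithm~\ref{alg:BT-subcycling-negativity-check} have a plus; your argument correctly uses the intended ``plus'' version $\hjd - \frac{\Dtn}{\Delta x}\bigl(\pos{\fhjp} + \nega{\fhjm}\bigr) \geq 0$.
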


\begin{proof}
    The proof is the same as for Proposition~\ref{prop:maxprincipleprediction}.
\end{proof}

\begin{algorithm}[htbp]
    \begin{algorithmic}[1]
        \Require Initial data for barotropic loop $(h, \ua, \Ta)^{n+\frac12}$, final time for barotropic loop $\Dtn$
        \Ensure $(h, \baru, \sa, \Ta)^{n+1}$
        \State $\tau \gets 0$
        \State $\Delta t \mathcal{F}^h \gets 0$
        \State $(h,\baru,\sa,\Ta) \gets \left(h^{n+\frac12}, \sumaN \la \ua^{n+\frac12}, \ua^{n+\frac12}-\baru^{n+\frac12}, \Ta^{n+\frac12} \right)$
        \State $h_\text{old} \gets h$
        \While{$\tau < \Dtn$}
            \State $\delta t \gets \min(\operatorname{computeDT\_barotropic}(h, \baru), \Dtn - \tau)$ \Comment{see~\eqref{eq:CFL_SWE}}
            \State $(f^{h},f^{h\baru}) \gets \operatorname{compute\_numerical\_SWE\_fluxes}(h,h\baru)$
            \If{$h_{\text{old},j} - \frac{1}{\Delta x} \left( \pos{\Delta t \mathcal{F}^h_{j+\frac12} + \delta t f^h_{j+\frac12}} + \nega{\Delta t \mathcal{F}^h_{j-\frac12} + \delta t f^h_{j-\frac12}} \right) \geq  0$ for all $j$} \Comment{see~\eqref{eq:non-negativity}}
                \State $(h, \baru) \gets \operatorname{SWE\_solver}(h, \baru, f^{h}, f^{h\baru}, \delta t)$ \Comment{see~\eqref{eq:FV-SW}}
                \State $\Delta t \mathcal{F}^h \gets \Delta t \mathcal{F}^h + \delta t f^h$ \Comment{see~\eqref{eq:summed_massflux}}
                \State $\tau \gets \tau + \delta t$
            \Else 
                \State $(\sa,\Ta) \gets \operatorname{update\_transported\_quatities}(\sa,\Ta,\Delta t \mathcal{F}^h)$ \Comment{see~\eqref{eq:subcycling}}
                \State $h_\text{old} \gets h$ 
                \State $\Delta t \mathcal{F}^h \gets 0$
            \EndIf
        \EndWhile
        \State $(\sa,\Ta) \gets \operatorname{update\_transported\_quatities}(\sa,\Ta,\Delta t \mathcal{F}^h)$ \Comment{see~\eqref{eq:subcycling}}
        \State return $(h, \ua= \sa+\baru,\Ta)$
    \end{algorithmic}
\caption{Barotropic loop (with subcycling and non-negativity check)}\label{alg:BT-subcycling-negativity-check}
\end{algorithm}

\begin{remark}
    In practice, there is no guarantee that condition~\eqref{eq:non-negativity} applies. We therefore check condition~\eqref{eq:non-negativity} using the summed flux $\sum_{k=0}^m \delta t^k f^{h,k}$ instead of $\Dtn \mathcal{F}^h = \sum_{k=0}^{K-1} \dtk f^{h,k}$ for each $m$, $1 \leq m \leq K-1$. If condition~\eqref{eq:non-negativity} is not fulfilled for one $m$, we update the transported quantities $(h\sa)^{n+\frac12,m}$ and $(h\Ta)^{n+\frac12,m}$ in between according to~\eqref{eq:subcycling}, but using the mass fluxes summed up to $m$ and restart the summation of the mass fluxes from here. The strategy is the same as in \cite{AB03} and is illustrated in more detail in Algorithm~\ref{alg:BT-subcycling-negativity-check}. The main argument of the proof is once again the one of Proposition~\ref{prop:maxprincipleprediction}.
\end{remark}

\subsubsection{Properties of the entire scheme} \label{S:choices}
We now consider the global scheme  based on the barotropic-baroclinic splitting, going from $(h, \ua, \Ta)^{n}_{j}$ to $(h, \ua, \Ta)^{n+1}_{j}$ with the timestep $\Dtn$, consisting of
\begin{itemize}
    \item the baroclinic part with
    \begin{itemize}
        \item the prediction step~\eqref{eq:predictionstep}-\eqref{eq:hjd} with choice~\eqref{eq:fluxdec} for the momentum flux;
        \item the correction step~\eqref{eq:correctionstep}-\eqref{eq:decu} with Definition~\eqref{eq:def_exchange_terms} of the exchange terms;
    \end{itemize}
    \item the baroclinic part with
     the update of the shallow water system~\eqref{eq:FV-SW} and the update~\eqref{eq:subcycling} of the deviation $\sa$ and the tracer $\Ta$ with the choice of flux~\eqref{eq:fluxdecBT}.
\end{itemize}
We suppose that  the CFL conditions~\eqref{eq:positivity_prediction},~\eqref{eq:CFLcor} and~\eqref{eq:non-negativity} are fulfilled. 
The entropy flux in the entropy inequality~\eqref{eq:entropyMSW} for the continuous solutions is denoted by $F^{E}=g h^{2} \baru + g h \zb \baru + \sumaN \frac{\la h \ua^{3}}{2}$. We now show that this inequality also holds at the discrete level.
\begin{theorem} \label{thm:DEI}
    Suppose that the scheme verifies the following additional assumptions:
    \begin{itemize}
        \item In the prediction step, there exists a numerical potential energy flux $F^{E^{p}}_{j\pm\frac12}$ consistent with $0$ such that
        \begin{equation} \label{eq:Ep}
            (E^{p})_{j}^{*} \leq (E^{p})_{j}^{n} - \dfrac{\Dtn}{\Delta x} \left( F^{E^{p}}_{j+\frac12} - F^{E^{p}}_{j-\frac12}  \right) \ \text{ where } \ E^{p}= g \frac{h^{2}}{2} + g h \zb .
        \end{equation}
        \item The shallow water scheme~\eqref{eq:FV-SW} satisfies a discrete entropy inequality (see Lemma~\ref{L:EIBT} for a precise definition) under the subtimestep restriction~\eqref{eq:CFL_SWE} on $\dtk$.
    \end{itemize}
    Then  there exists a numerical entropy flux $F^{E}_{j \pm \frac12}$ consistent with the continuous entropy flux $F^{E}$ such that, under the same restrictions on $\Dtn$ and $\dtk$, the following discrete entropy inequality holds:
    $$ E_{j}^{n+1} \leq E_{j}^{n} + \frac{\Dtn}{\Delta x} \left( F^{E}_{j + \frac12} - F^{E}_{j -\frac12}\right). $$
\end{theorem}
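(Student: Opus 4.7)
The plan is to treat each of the four substeps (prediction, correction, subcycled shallow water, transport update for $\sa$ and $\Ta$) separately, obtaining for each a discrete entropy inequality with a numerical entropy flux, and then to telescope the four inequalities. Since the Lie splitting is a composition of updates, the resulting numerical flux will be the sum of the four substep fluxes, and consistency with the continuous flux $F^{E}$ follows from the splitting decomposition of $F^E$ into its barotropic part $g h^{2} \baru + g h \zb \baru + \sumaN \frac{\ha \ua^{2} \baru}{2}$ and its baroclinic part $\sumaN \frac{\ha \ua^{2}(\ua-\baru)}{2}$. Throughout I split $E = E^{p} + E^{k}$ with kinetic energy $E^{k} = \sumaN \frac{\ha \ua^{2}}{2}$, and I use the identity
\begin{equation*}
    E^{k} = \frac{h \baru^{2}}{2} + \sumaN \frac{\la h \sa^{2}}{2},
\end{equation*}
which follows from $\ua = \baru + \sa$ and $\sumaN \la \sa = 0$.

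For the baroclinic step, the potential energy bound over the prediction substep is given by assumption~\eqref{eq:Ep}. The kinetic energy bound will come from the observation that system~\eqref{eq:BCnoME} advances $(\ha, \ha \ua)$ via a finite-volume scheme with mass flux $F^{\ha}_{j\pm\frac12}$ and momentum flux~\eqref{eq:fluxdec} of upwind type. A classical convexity argument (see the proof of Proposition~\ref{prop:maxprincipleprediction}, applied to $\psi(u) = u^{2}/2$ rather than $\psi(u) = u$) yields, under the CFL condition~\eqref{eq:positivity_prediction}, a cell inequality for $\ha \ua^{2}/2$ with a numerical kinetic-energy flux built from $\Fhajpm$ and $\ua$. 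Summation over $\alpha$ produces the desired prediction kinetic-energy inequality. For the correction substep, the potential energy is unchanged because $\hjd$ is already fixed at the end of prediction; only the kinetic energy evolves through~\eqref{eq:correctionstep}. The key remark is that this update has exactly the same algebraic structure as the prediction step, but with the layer index $\alpha$ playing the role of the spatial index $j$ and the exchange terms $\Gapm^{\star}$ playing the role of the mass fluxes. Since $G_{\frac12,j}^{\star} = G_{N+\frac12,j}^{\star} = 0$, the $\alpha$-telescoping is boundary-free, and the convexity argument under~\eqref{eq:CFLcor} gives a pure cell decrease of $\sumaN \ha \ua^{2}/2$ with no additional spatial flux, which is the discrete counterpart of~\eqref{eq:EIBC}.

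For the barotropic step, the shallow water update~\eqref{eq:FV-SW} delivers by assumption a discrete entropy inequality controlling $E^{p} + \frac{h \baru^{2}}{2}$ over each subtimestep $\dtk$; summation from $k=0$ to $K-1$ provides a discrete inequality at scale $\Dtn$ with numerical flux $\sum_{k} \dtk F^{E,\text{SWE},k}_{j\pm\frac12}/\Dtn$, consistent with $g h^{2}\baru + g h \zb \baru + \frac{h\baru^{3}}{2}$. It remains to control $\sumaN \frac{\la h \sa^{2}}{2}$ across the single large update~\eqref{eq:subcycling}. Since $h \sa$ is transported with the summed mass flux $\Dtn \fhjpm$ via~\eqref{eq:fluxdecBT}, exactly the same convexity argument as in the prediction step applies under the non-negativity condition~\eqref{eq:non-negativity} on $h^{n+1}$, yielding a cell inequality for $h \sa^{2}/2$ with a numerical flux of the form $\sa^{2}/2 \cdot (\fhjpm)^{\pm}$ which is consistent with $\frac{h\baru\sa^{2}}{2}$. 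Summing over $\alpha$ weighted by $\la$ produces the missing baroclinic-kinetic contribution, and adding to the SWE inequality gives the full barotropic entropy inequality consistent with $g h^{2} \baru + g h \zb \baru + \sumaN \frac{\la h \ua^{2} \baru}{2}$.

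Finally, I telescope the four inequalities in sequence from $E_{j}^{n}$ to $E_{j}^{n+1}$, reading off the numerical flux $F^{E}_{j\pm\frac12}$ as the sum of the four substep fluxes. The main obstacle I anticipate is the bookkeeping for consistency of the assembled flux: in particular, ensuring that the subcycled barotropic flux (summed over $\dtk$) and the single-step transport flux for $\sa$, $\Ta$ combine correctly with the baroclinic prediction and correction fluxes so that their sum is consistent with $F^{E}$. This relies crucially on using the same summed mass flux $\Dtn \mathcal{F}^{h}_{j\pm\frac12}$ in~\eqref{eq:subcycling} as in the density update~\eqref{eq:summed_massflux}, and on the upwind choice~\eqref{eq:fluxdecBT} which matches the one used for $\ha$ in the prediction step. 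A secondary technical point is verifying that each convexity argument remains valid under the already-imposed CFL conditions~\eqref{eq:CFL_prediction}, \eqref{eq:CFLcor} and~\eqref{eq:non-negativity}, without needing a tighter timestep restriction for the kinetic-energy bound.
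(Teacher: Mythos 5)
Your proposal is correct and follows essentially the same route as the paper: the same decomposition $E = E^{p} + \frac{h\baru^{2}}{2} + \sumaN \frac{\la h \sa^{2}}{2}$, the same substep-by-substep entropy inequalities (prediction under~\eqref{eq:positivity_prediction}, correction as the $\alpha$-analogue under~\eqref{eq:CFLcor}, summed shallow-water inequalities plus the $\sa$-transport under~\eqref{eq:non-negativity}), and the same telescoping; these are precisely Lemmas~\ref{L:pred},~\ref{L:cor} and~\ref{L:EIBT}. Your packaging of the kinetic-energy bounds as Jensen's inequality applied to the convex combinations from Proposition~\ref{prop:maxprincipleprediction} is an equivalent, slightly tidier presentation of the explicit computations the paper carries out by hand.
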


\begin{remark} \label{rmk:sumnull}
    Let us recall that in the baroclinic step~\eqref{eq:BC} the total water height and thus the potential energy $E^{p} = g \frac{h^{2}}{2} + g h \zb $ remain constant. At the discrete level the mass flux~\eqref{eq:predictionstep} may introduce some numerical diffusion and a decay of the potential energy. If the total mass flux $\sumaN F^{\ha}_{j\pm\frac12}$ vanishes, we automatically get $\hjd=\hjn$ and an exact numerical counterpart of $\partial_{t} h =0$ and $\partial_{t} E^{p} =0$. In other words in this case~\eqref{eq:Ep} is an equality with $F^{E^{p}}_{j\pm\frac12}=0$.
    %
\end{remark}

\begin{proof}[Proof of Theorem~\ref{thm:DEI}]
The proof is detailed in Section~\ref{S:DEIproofs} and heavily relies on the splitting of the total entropy inequality~``\eqref{eq:entropyMSW}=\eqref{eq:EIBT}+\eqref{eq:EIBC}'' obtained in Proposition~\ref{P:energyequality}. We obtain a discrete counterpart of~\eqref{eq:EIBC} for the baroclinic part in Lemma~\ref{L:pred} and Lemma~\ref{L:cor} for the prediction and correction steps respectively. Lemma~\ref{L:EIBT} mimics~\eqref{eq:EIBT} through the barotropic step with the subcycling strategy. The three inequalities are then put together to obtain the decay of the entropy over the entire timestep.
\end{proof}

\begin{proposition} \label{P:choice}
    Suppose that the shallow water scheme~\eqref{eq:FV-SW}  preserves both the lake at rest equilibrium with null velocity $\baru$ and constant free surface $h+ \zb$ and the non-negativity of the water height under a CFL condition of the form~\eqref{eq:CFL_SWE}.
    Consider the baroclinic mass flux for~\eqref{eq:predictionstep} given by
    \begin{equation} \label{eq:choice}
        \Fhajp = (\ha\sa)_{j+\frac12}^n 
        := \begin{cases}
            (\ha \sa)_j^n, & \text{ if } \hajn<\hajpn, \\
            (\ha \sa)_{j+1}^n, & \text{ if } \hajn \geq \hajpn.
        \end{cases}
    \end{equation}
    Then the overall scheme
    \begin{itemize}
        \item preserves the lake at rest equilibrium;
        \item preserves the shallow water equation solutions:
        $$ (\forall \alpha, \ \forall j, \ \uajn=\baru_{j}^{n }) \Longrightarrow (\forall \alpha, \ \forall j, \ \uajnp=\baru_{j}^{n+1} ); $$ 
        \item verifies the first hypothesis of Theorem~\ref{thm:DEI};
        \item preserves the non-negativity of the water height under the CFL condition $ \Dtn \leq \dfrac{0.5 \Delta x}{\max_{\alpha,j} |\sigma_{\alpha,j}^{n}|}$.
    \end{itemize}
\end{proposition}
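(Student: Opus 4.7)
\medskip
\noindent\textbf{Proof plan.} The linchpin of the argument is that the flux~\eqref{eq:choice} is upwinded according to the comparison of the layer heights $\hajn$ and $\hajpn$; but since the constraint $\hajn = \la\hjn$ holds at the beginning of the baroclinic step, this comparison reduces to one between $\hjn$ and $\hjpn$ and therefore yields the \emph{same} side-selection for every layer $\alpha$ at a given interface $j+\tfrac12$. As a direct consequence the total mass flux factors as
\[
    \sumaN \Fhajp =
    \begin{cases}
        h_j^n \sumaN \la(\sigma_\alpha)_j^n & \text{if } \hjn<\hjpn,\\[2pt]
        h_{j+1}^n \sumaN \la(\sigma_\alpha)_{j+1}^n & \text{if } \hjn\geq\hjpn,
    \end{cases}
\]
and both quantities vanish by Proposition~\ref{P:sumsigma} (applied at time $t^n$). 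Hence $h_j^{n+\frac12}=h_j^n$, i.e.\ the prediction step leaves the total water height unchanged. Using Remark~\ref{rmk:sumnull} this gives property (3) immediately with the trivial flux $F^{E^p}_{j\pm\frac12}=0$.

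\medskip
For properties (1) and (2), I would treat both at once since the lake-at-rest case is a particular instance of ``$u_{\alpha,j}^n=\baru_j^n$ for all $\alpha$, $j$'' with the additional assumption $\baru_j^n=0$ and constant free surface. Whenever $u_{\alpha,j}^n=\baru_j^n$, we have $\sigma_{\alpha,j}^n=0$, so every baroclinic mass flux~\eqref{eq:choice} vanishes, the decentered momentum/tracer fluxes~\eqref{eq:fluxdec} also vanish (they factor the mass flux), and the exchange terms~\eqref{eq:def_exchange_terms} are zero because $\hajs=\hajn=\la h_j^{n+\frac12}$. Hence the baroclinic step acts as the identity on the velocities and tracers. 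In the barotropic step, the shallow-water subroutine updates $(h,\baru)$ via its own (lake-at-rest-preserving) scheme, and the update~\eqref{eq:subcycling} of $\sigma$ is homogeneous in $\sigma$, so $\sigma_{\alpha,j}^{n+1}=0$ and $u_{\alpha,j}^{n+1}=\baru_j^{n+1}$. Specializing to $\baru\equiv 0$ and constant $h+\zb$ then gives the lake-at-rest preservation directly.

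\medskip
For property (4), the key technical estimate is a one-sided bound on the chosen flux: because~\eqref{eq:choice} picks $(\ha\sa)$ from the side where $\ha$ is \emph{smaller},
\[
    |\Fhajp| \ \leq\ \min(\hajn,\hajpn)\ \max_{\beta,k}|\sigma_{\beta,k}^n|,
\]
and similarly for $\Fhajm$. Combining the two interfaces gives
\[
    \pos{\Fhajp}+\nega{\Fhajm}\ \leq\ 2\hajn \max_{\beta,k}|\sigma_{\beta,k}^n|,
\]
so the CFL condition $\Dtn\leq 0.5\Delta x/\max_{\alpha,j}|\sigma_{\alpha,j}^n|$ is precisely what is needed to verify~\eqref{eq:positivity_prediction}. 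Proposition~\ref{prop:maxprincipleprediction} then yields $\hajs\geq 0$, hence $h_j^{n+\frac12}=\sumaN\hajs\geq 0$; since $h_j^{n+\frac12}=h_j^n$ by the first paragraph, and the SWE scheme preserves $h\geq 0$ under~\eqref{eq:CFL_SWE}, we conclude $h_j^{n+1}\geq 0$.

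\medskip
\noindent\textbf{Main obstacle.} Nothing here is deep, but the only non-routine point is the sharp flux bound by the \emph{minimum} of the two neighboring layer heights; this is what produces the relatively mild CFL constant $0.5$ rather than a smaller number. Everything else reduces to tracking the identity $\sumaN\la\sa=0$ through the scheme, which is a direct consequence of the layer-independence of the side-selection in~\eqref{eq:choice}.
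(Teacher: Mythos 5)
Your proposal is correct and follows essentially the same route as the paper's proof: the vanishing of the layer-summed mass flux (hence $\hjd=\hjn$ and $F^{E^p}_{j\pm\frac12}=0$), the observation that $\sigma\equiv 0$ makes the whole baroclinic step trivial for the lake-at-rest and purely barotropic states, and the fact that the upwinding in~\eqref{eq:choice} always selects the side with the smaller layer height, which yields the factor $0.5$ in the CFL condition. Your explicit remark that the side-selection is layer-independent because $\hajn=\la\hjn$ is a detail the paper leaves implicit, but it does not change the argument.
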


\begin{proof}
    We recall that  $\sumaN \hajn \sigma_{\alpha,j}^{n} =0$
    which yields $\sumaN \Fhajp=0$ and $\hjd = \hjn$, see Remark~\ref{rmk:sumnull}. In particular~\eqref{eq:Ep} holds with  $F^{E^{p}}_{j\pm\frac12}=0$.
    
    Consider a lake at rest initial data: there exists $C$ such that for all $j$, $\hjn+(\zb)_{j}= C$ and for all $\alpha$ and for all $j$, $\uajn=0$. Then all the deviations $\sigma_{\alpha,j}^{n}$ are equal to $0$. The decentered choice~\eqref{eq:fluxdec} for the fluxes yields $F^{\ha \ua}_{j+\frac12}=0$ and thus $\uajs=0$. Moreover, with the mass flux~\eqref{eq:choice} we not only have $\hjd=\hjn$ but also $\hajs=\hajn=\la \hjd$. Thus all the exchange terms~\eqref{eq:def_exchange_terms} vanish and we also have $\uajd=0$ after the correction step~\eqref{eq:correctionstep}. The lake at rest equilibrium is thus preserved through the baroclinic step. It is also preserved in the shallow water part by hypothesis. The redistribution~\eqref{eq:fluxdecBT}-\eqref{eq:subcycling} yields $\sajnp=0$ and all the individual velocities are still null after the barotropic step.
    
    We now turn to the case of a purely barotropic setting, where the velocity is uniform along every vertical column of fluid (but may vary along the horizontal): for all $\alpha$ and for all $ j$, $\uajn=\baru_{j}^{n}$ or in other words $\sigma_{\alpha,j}^{n}=0$ identically. Then once again in the baroclinic step $\Fhajp=0$, yielding $F^{\ha \ua}_{j+\frac12}=0$, $\hajs=\hajn$ and $u_{\alpha,j}^{\star}=u_{\alpha,j}^{n}$. The exchange terms vanish and the correction step does nothing. In the barotropic step, $h$ and $\baru$ vary in the resolution of the shallow water system~\eqref{eq:FV-SW}. The redistribution over the layers~\eqref{eq:subcycling} gives $\sigma_{\alpha,j}^{n+1}=0$ and thus $u_{\alpha,j}^{n+1} = \bar{u}_j^{n+1}$.
    
    We now turn to the non-negativity condition~\eqref{eq:positivity_prediction}. We treat the worst case for the non-negativity, the case $\Fhajp>0$, $\Fhajm<0$. Then
    \begin{align*}
        \hajn - \frac{\Dtn}{\Delta x} \left( \pos{\Fhajp} + \nega{\Fhajm} \right)
        &= \hajn- \dfrac{\Dtn}{\Delta x} \left( h_{\alpha,j+\frac12}^{n} |\sigma_{\alpha,j+\frac12}^{n}| + h_{\alpha,j-\frac12}^{n} | \sigma_{\alpha,j-\frac12}^{n}| \right) \\
        & \geq \hajn \left( 1 - \dfrac{\Dtn}{\Delta x} (|\sigma_{\alpha,j+\frac12}^{n}| + | \sigma_{\alpha,j-\frac12}^{n}|) \right)
    \end{align*}
    which is non-negative as soon as
    $$ \Dtn \leq \dfrac{0.5 \Delta x}{\max_{\alpha,j} |\sigma_{\alpha,j}^{n}|}.$$
\end{proof}

\begin{proposition} \label{P:Rusanov}
    Proposition~\ref{P:choice} is also valid for the Rusanov mass flux
    \begin{equation} \label{eq:Rusanov}
        \Fhajp = \dfrac{h_{\alpha, j}^{n} \sigma_{\alpha, j}^{n} + h_{\alpha, j+1}^{n} \sigma_{\alpha, j+1}^{n}}{2} - A_{j+\frac12} \dfrac{h_{\alpha, j+1}^{n} -h_{\alpha, j}^{n} }{2} \quad \text{with} \quad A_{j+\frac12}=\max_{\substack{1 \leq \alpha \leq N, \\ 0 \leq k \leq 1}}  |\sigma_{\alpha, j+k}^{n}|
    \end{equation}
    with the difference that $\Dtn \leq \dfrac{\Delta x}{\max_{\alpha,j} |\sigma_{\alpha,j}^{n}|}$ is enough to ensure the non-negativity of th water height.
\end{proposition}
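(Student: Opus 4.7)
The plan is to revisit the four conclusions of Proposition~\ref{P:choice} one at a time and check which arguments transfer verbatim to the Rusanov flux~\eqref{eq:Rusanov}. The preservation of the lake at rest and of the purely barotropic regime in fact becomes even simpler. Both configurations are characterized by $\sigma_{\alpha,j}^n = 0$ for every $(\alpha,j)$, so that $A_{j+\frac12} = \max_{\alpha,k}|\sigma_{\alpha,j+k}^n| = 0$ and the Rusanov flux~\eqref{eq:Rusanov} collapses to the centered average $(\hajn \sigma_{\alpha,j}^n + \hajpn \sigma_{\alpha,j+1}^n)/2 = 0$. From that point on the argument of Proposition~\ref{P:choice} applies word for word: the decentered momentum flux~\eqref{eq:fluxdec} vanishes, $\uajs = \uajn$, $\hajs = \hajn = \la \hjd$, the exchange terms~\eqref{eq:def_exchange_terms} are all zero, and the barotropic redistribution~\eqref{eq:fluxdecBT}--\eqref{eq:subcycling} is unaffected.

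For the non-negativity of the water height I would rewrite the prediction step~\eqref{eq:predictionstep} as a convex combination of the three neighbouring values,
\begin{equation*}
\hajs = \left(1 - \tfrac{\Dtn}{2\Delta x}(A_{j+\frac12} + A_{j-\frac12})\right) \hajn + \tfrac{\Dtn}{2\Delta x}(A_{j+\frac12} - \sigma_{\alpha,j+1}^n)\,\hajpn + \tfrac{\Dtn}{2\Delta x}(A_{j-\frac12} + \sigma_{\alpha,j-1}^n)\, h_{\alpha,j-1}^n.
\end{equation*}
The off-diagonal coefficients are non-negative by the very definition of $A_{j\pm\frac12}$, and the diagonal one is non-negative as soon as $\Dtn \leq \Delta x / \max_{\alpha,j}|\sigma_{\alpha,j}^n|$, which is exactly the relaxed CFL stated. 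The upwind choice~\eqref{eq:choice} only activates one of the two off-diagonal terms at a time, which explains the extra factor $1/2$ appearing in the CFL of Proposition~\ref{P:choice} but absent here.

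The real work lies in verifying the first hypothesis of Theorem~\ref{thm:DEI}. Summing the decomposition above over $\alpha$ and invoking the algebraic identity $\sumaN \hajn \sigma_{\alpha,j}^n = 0$ kills all advective contributions, so that $\hjd$ itself is a convex combination of $h_{j-1}^n,\hjn,\hjpn$ with weights $\bigl(1 - \tfrac{\mu}{2}(A_{j+\frac12}+A_{j-\frac12}),\,\tfrac{\mu A_{j+\frac12}}{2},\,\tfrac{\mu A_{j-\frac12}}{2}\bigr)$ where $\mu=\Dtn/\Delta x$. Applying Jensen's inequality to the convex function $h \mapsto \tfrac{g h^2}{2} + g h z_{b,j}$ then bounds $(E^p)_j^\star$ by the matching convex combination of $\tfrac{g h_k^2}{2} + g h_k z_{b,j}$ for $k\in\{j-1,j,j+1\}$. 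Expanding $g h_k z_{b,j} = g h_k z_{b,k} - g h_k(z_{b,k}-z_{b,j})$ splits this bound into a telescope, which generates the candidate flux
$$
F^{E^p}_{j+\frac12} := -\tfrac{A_{j+\frac12}}{2}\bigl[(E^p)_{j+1}^n - (E^p)_j^n\bigr] + (\text{topography correction at interface }j+\tfrac12),
$$
manifestly consistent with the continuous value $0$, plus a residual of the form $\mu\tfrac{A_{j\pm\frac12}}{2}\, g\, h_{j\pm 1}^n\,(z_{b,j}-z_{b,j\pm 1})$.

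The main obstacle is that this residual is not in conservation form: the factor $z_{b,j}-z_{b,j\pm 1}$ uses the cell-centered topography on the wrong side of each interface. To close the inequality I would symmetrize the residual across each interface using an averaged topography in the topography correction, so that only an antisymmetric leftover $-\mu\tfrac{A_{j+\frac12}}{4}g(\hjpn-\hjn)(z_{b,j+1}-z_{b,j})$ survives at each interface $j+\tfrac12$, and then show that this leftover is absorbed by the Jensen slack $(w_0 w_\pm/2)(h_{j\pm 1}-\hjn)^2 + (w_+w_-/2)(\hjpn-h_{j-1}^n)^2$ discarded in the convex-combination inequality. This absorption is where the CFL~\eqref{eq:CFL_prediction} on $\Dtn$ plays its role, and it is the technical heart of the proof. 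The non-negativity part of the proposition follows from the convex-combination argument used above for individual water heights, with the only difference that a Rusanov scheme needs $\mu\max A \leq 1$ rather than $\mu\max |\sigma|\leq 1/2$, giving the announced CFL.
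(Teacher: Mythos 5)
Your treatment of the lake at rest, of the purely barotropic case, and of the non-negativity of the water height coincides with the paper's proof: both arguments rest on the observation that $\sigma_{\alpha,j}^n\equiv 0$ forces $A_{j\pm\frac12}=0$ and hence $\Fhajpm=0$, and on writing $\hajs$ as a convex combination of $h_{\alpha,j-1}^n$, $\hajn$, $\hajpn$ whose diagonal weight $1-\frac{\Dtn}{2\Delta x}\bigl(A_{j+\frac12}+A_{j-\frac12}\bigr)$ is non-negative under the relaxed CFL condition. The verification of hypothesis~\eqref{eq:Ep} also starts exactly as in the paper: sum over $\alpha$, use $\sumaN\hajn\sigma_{\alpha,j}^n=0$ to cancel the advective contributions, and apply Jensen's inequality to the resulting convex combination of $h_{j-1}^n$, $\hjn$, $\hjpn$. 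The paper stops there and takes $F^{E^{p}}_{j+\frac12}=A_{j+\frac12}\bigl(E^{p,n}_{j+1}-E^{p,n}_{j}\bigr)/2$; its Jensen computation in fact only controls the $g h^2/2$ part of $E^p$, and the $g h z_b$ contribution is not discussed.

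You go further and try to handle the topography term explicitly, but the proposed absorption does not close. The residual you isolate at the interface $j+\frac12$ has the form $\frac{\Dtn}{\Delta x}\,\frac{A_{j+\frac12}}{4}\,g\,(\hjpn-\hjn)(z_{b,j+1}-z_{b,j})$: it carries the prefactor $\frac{\Dtn}{\Delta x}A_{j+\frac12}$ and is \emph{linear} in $\hjpn-\hjn$. The Jensen slack you want to absorb it into, $w_0w_+(\hjpn-\hjn)^2$ with $w_+=\frac{\Dtn}{2\Delta x}A_{j+\frac12}$ and $w_0\leq 1$, carries the \emph{same} prefactor but is \emph{quadratic} in $\hjpn-\hjn$. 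The comparison is therefore independent of $\Dtn$, and when $z_b$ varies while $h$ is nearly flat the residual dominates the slack, so no CFL condition can rescue the argument. Closing this step would require either flat topography, reading~\eqref{eq:Ep} as a statement on $gh^2/2$ alone, or a choice of interface topography in $F^{E^{p}}$ whose remainder pairs with the mass flux $-A_{j+\frac12}(\hjpn-\hjn)/2$ with a favorable sign. None of this is needed to reproduce the paper's proof, which simply does not address the point; everything else in your proposal matches it.
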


\begin{proof}
    The water height does not vary during the baroclinic step on the lake at rest or shallow water solution because $\sigma_{\alpha,j}^n=0$ and $A_{j\pm \frac12}=0$ in this case. As in the proof before, $\Fha_{j\pm\frac12}=0$ and thus $F^{\ha\ua}_{j\pm\frac12}$ and the exchange terms vanish. Thus also the velocity does not change during the baroclinic step. The barotropic step preserves the lake at rest solution and the shallow water solution by hypothesis. In the case of the shallow water solution $h$ and $\bar{u}$ vary but since the redistribution gives $\sigma_{\alpha,j}^{n+1} = 0$ it holds $u_{\alpha,j}^{n+1} = \uajn = 0$ in the case of the lake at rest and $u_{\alpha,j}^{n+1} = \bar{u}_j^{n+1}$ in the case of the shallow water solution.
    
    Now we investigate the non-negativity condition~\eqref{eq:positivity_prediction}. The worst case is when $\Fhajp>0$ and $\Fhajm<0$. Then
    \begin{align*}
        &\hajn - \dfrac{\Dtn}{\Delta x} \left( \pos{\Fhajp} + \nega{\Fhajm} \right) \\ 
        = &\, \hajn - \dfrac{\Dtn}{\Delta x} \left( h_{\alpha, j}^{n} \frac{\sigma_{\alpha, j}^{n}+A_{j+\frac12}}{2} + h_{\alpha, j+1}^{n} \dfrac{\sigma_{\alpha, j+1}^{n}-A_{j+\frac12}}{2} 
        - h_{\alpha, j-1}^{n} \dfrac{\sigma_{\alpha, j-1}^{n}+A_{j-\frac12}}{2} - h_{\alpha, j}^{n} \dfrac{\sigma_{\alpha, j}^{n}-A_{j-\frac12}}{2} \right) \\
        = &\, \hajn \left( 1 - \dfrac{\Dtn}{\Delta x} \dfrac{A_{j+\frac12}+A_{j-\frac12}}{2} \right) + h_{\alpha, j+1}^{n} \dfrac{\Dtn}{\Delta x}  \dfrac{A_{j+\frac12}-\sigma_{\alpha, j+1}^{n}}{2} 
        +h_{\alpha, j-1}^{n} \dfrac{\Dtn}{\Delta x} \dfrac{\sigma_{\alpha, j-1}^{n}+A_{j-\frac12}}{2}
    \end{align*}
    With $A_{j\pm1/2} = \max( |\sigma_{\alpha, j}^{n}|,  |\sigma_{\alpha, j\pm1}^{n}|)$ the coefficients in front of $h_{\alpha, j+1}^{n}$ and $h_{\alpha, j-1}^{n}$ are non-negative.  The coefficient in front of $\hajn$ is non-negative as soon as
    \begin{equation} \label{eq:dt_rusanov}
        \Dtn \leq \dfrac{\Delta x}{\max_{\alpha,j} |\sigma_{\alpha,j}^{n}|} \leq  \dfrac{2 \Delta x}{A_{j+\frac12}+A_{j-\frac12}}.
    \end{equation}
    
    We now turn to the variation of potential energy~\eqref{eq:Ep} in the prediction step. Under the hypothesis~\eqref{eq:dt_rusanov}
    \begin{align*}
      h_j^\star&= \hjn \left( 1 - \dfrac{\Dtn}{\Delta x} \dfrac{A_{j+\frac12}+A_{j-\frac12}}{2} \right)  + \hjpn \dfrac{\Dtn}{\Delta x}  \dfrac{A_{j+\frac12}}{2} +h_{ j-1}^{n} \dfrac{\Dtn}{\Delta x} \dfrac{A_{j-\frac12}}{2}
    \end{align*}
    is a convex combination of $\hjn$, $\hjpn$ and $h_{j-1}^{n}$. Hence we get with Jensen's inequality
    \begin{align*}
      (h_j^\star)^{2}&\leq (\hjn)^{2} \left( 1 - \dfrac{\Dtn}{\Delta x} \dfrac{A_{j+\frac12}+A_{j-\frac12}}{2} \right)  + (\hjpn)^{2} \dfrac{\Dtn}{\Delta x}  \dfrac{A_{j+\frac12}}{2}  +(h_{j-1}^{n})^{2} \dfrac{\Dtn}{\Delta x} \dfrac{A_{j-\frac12}}{2} \\
      &= (\hjn)^{2} + \dfrac{\Dtn}{\Delta x} \left[ A_{j+\frac12} \dfrac{(\hjpn)^{2}-(\hjn)^{2}}{2} -  A_{j-\frac12} \dfrac{(\hjn)^{2}-(h_{ j-1}^{n})^{2}}{2} \right]
    \end{align*}
    and we obtain~\eqref{eq:Ep} with $F^{E^{p}}_{j+\frac12}= A_{j+\frac12} \dfrac{ E^{p,n}_{j+1}-E^{p,n}_{j}}{2}$.
\end{proof}

\section{Numerical experiments} \label{S:testcases}
In this section we perform several numerical test cases. We investigate the convergence behaviour of the splitting towards an analytical solution of the hydrostatic Euler equations, the reduction of computational cost and the well-balancing property of the scheme for the lake at rest and for the geostrophic equilibrium.

The results presented here are all obtained using the Rusanov mass flux~\eqref{eq:Rusanov}. In addition, we ran the same test cases again with the mass flux~\eqref{eq:choice}, which gave us very similar results.

\subsection{Convergence study: analytical solution for the 2d hydrostatic Euler equations} \label{sec:analytical_sol_2d_euler}
According to \cite{BBSM2013} stationary solutions of the 2d ($x,z$) hydrostatic and incompressible Euler equations are given by
\begin{align*}
    z_b(x) &= \bar{z_b} - h(x) - \frac{\alpha^2\beta^2}{2g\sin^2(\beta h(x))}, \\
    u(x,z) &= \frac{\alpha \beta}{\sin(\beta h(x))} \cos(\beta (z-z_b(x))), \\
    w(x,z) &= \alpha \beta \left( \frac{\partial z_b}{\partial x} \frac{\cos(\beta (z-z_b(x)))}{\sin(\beta h(x))} + \frac{\partial h}{\partial x} \frac{\sin(\beta (z-z_b(x))) \cos(\beta h(x))}{\sin^2(\beta h(x))} \right)
\end{align*}
for $\alpha, \beta \in \R$ and $h \in C^1(\Omega)$ non-negative such that $\sin(\beta h(x)) \neq 0$ for all $x \in \Omega$. 
For our testcase we choose the domain $\Omega = [-5,5]$, the parameters $\alpha = 0.1$, $\beta = 1$, $\bar{z_b} = 2$ and $h(x) = 2-e^{-x^2}$ and use this as initial data. In this setting the Froude number is $\mathrm{Fr}=0.04$. We compute until final time $t=60 \, \mathrm{s}$ using the barotropic-baroclinic splitting with implicit treatment of the exchange terms and subcycling strategy and investigate the behaviour of the numerical solution when refining the cells in $x$-direction and refining the layers. Figure~\ref{fig:2d_euler_velocity_profiles} illustrates the velocity field and the vertical velocity profile of $u$ at various positions $x$. The upper section of the figure displays the velocity field, while the red lines indicate the locations where the convergence behavior of the vertical velocity profile of $u$ is investigated. This analysis is presented in the second row of Figure~\ref{fig:2d_euler_velocity_profiles}. The $L^1$-errors and experimental orders of convergence are given in Table~\ref{tab:EOC_euler}. 

If we modify this testcase such that it becomes a non stationary testcase by adding a smooth perturbation of the water height, the scheme still converges with first order.

\begin{figure}[htbp]
    \centering
    \includegraphics[width=0.75\textwidth]{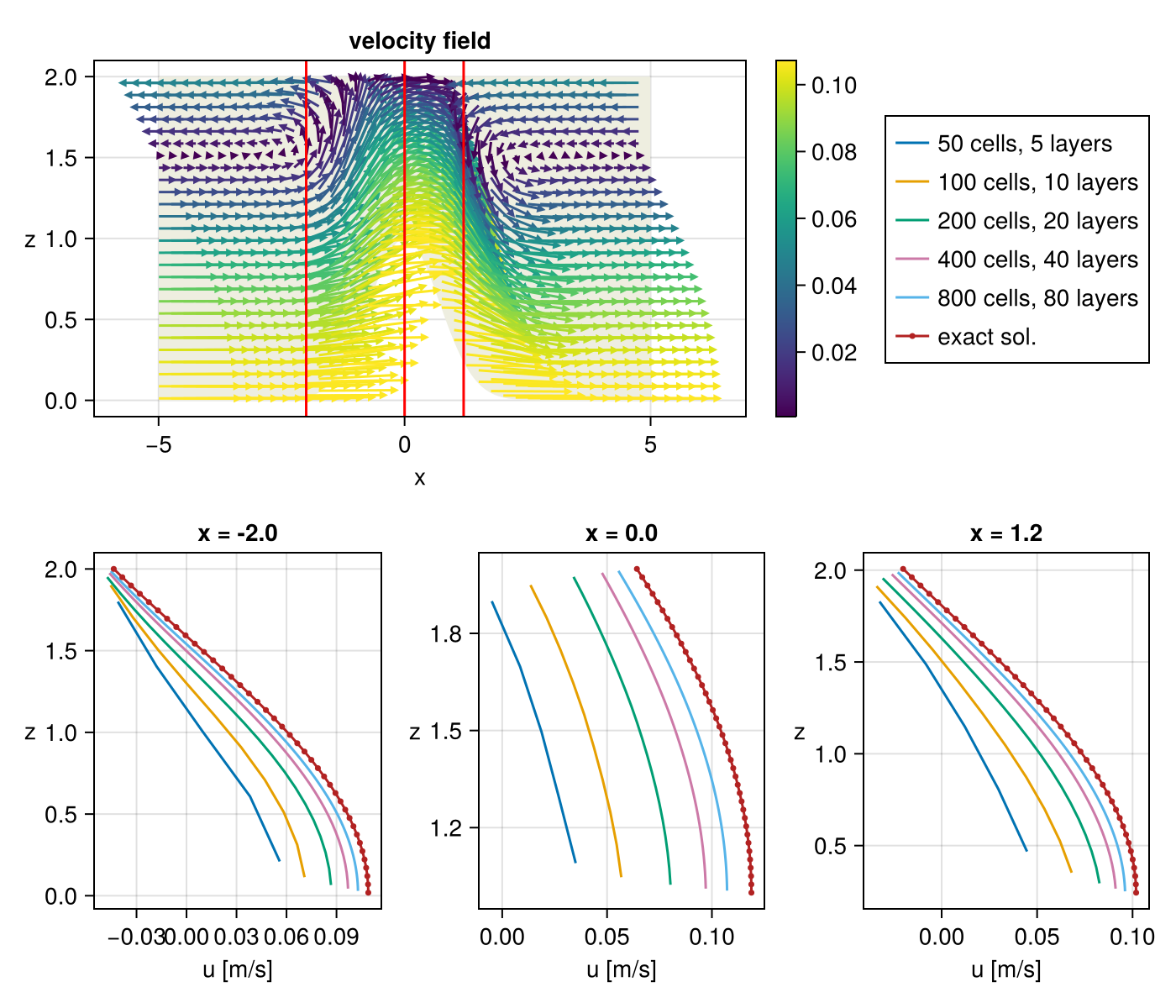}
    \caption{Upper row: The velocity field for testcase~\ref{sec:analytical_sol_2d_euler}, along with the highlighted positions for a detailed examination of the vertical velocity profile of 
    $u$. Lower row: The convergence behavior of the vertical velocity profiles of $u$ at various positions $x$.}
    \label{fig:2d_euler_velocity_profiles}
\end{figure}

\begin{table}[htbp]
    \centering
    \begin{tabular}{c|cc|cc}
        cells $\times$ layers & $h$ error & EOC & $u$ error & EOC \\ 
\hline
50 $\times$ 5 & 6.10e-04 & --- & 8.37e-02 & --- \\
100 $\times$ 10 & 1.98e-04 & 1.62 & 6.05e-02 & 0.47 \\
200 $\times$ 20 & 1.02e-04 & 0.96 & 3.74e-02 & 0.70 \\
400 $\times$ 40 & 5.50e-05 & 0.88 & 2.09e-02 & 0.84 \\
800 $\times$ 80 & 2.88e-05 & 0.94 & 1.11e-02 & 0.92 \\
1600 $\times$ 160 & 1.58e-05 & 0.86 & 5.70e-03 & 0.96 
    \end{tabular}
    \caption{$L^1$-errors and experimental orders of convergence for testcase~\ref{sec:analytical_sol_2d_euler}.}
\end{table} \label{tab:EOC_euler}

To examine numerical diffusion, we introduce a column of tracer on the left side of the bump in the bottom topography in the initial condition and monitor the $L^2$-norm of the tracer over time. The results are presented in Figure~\ref{fig:L2_tracer_Euler}. This investigation reveals that, in this testcase, the original scheme without splitting exhibits lower diffusion compared to the barotropic-baroclinic splitting.

\begin{figure}[htbp]
    \begin{subfigure}[t]{0.48\textwidth}
        \includegraphics[width=\textwidth]{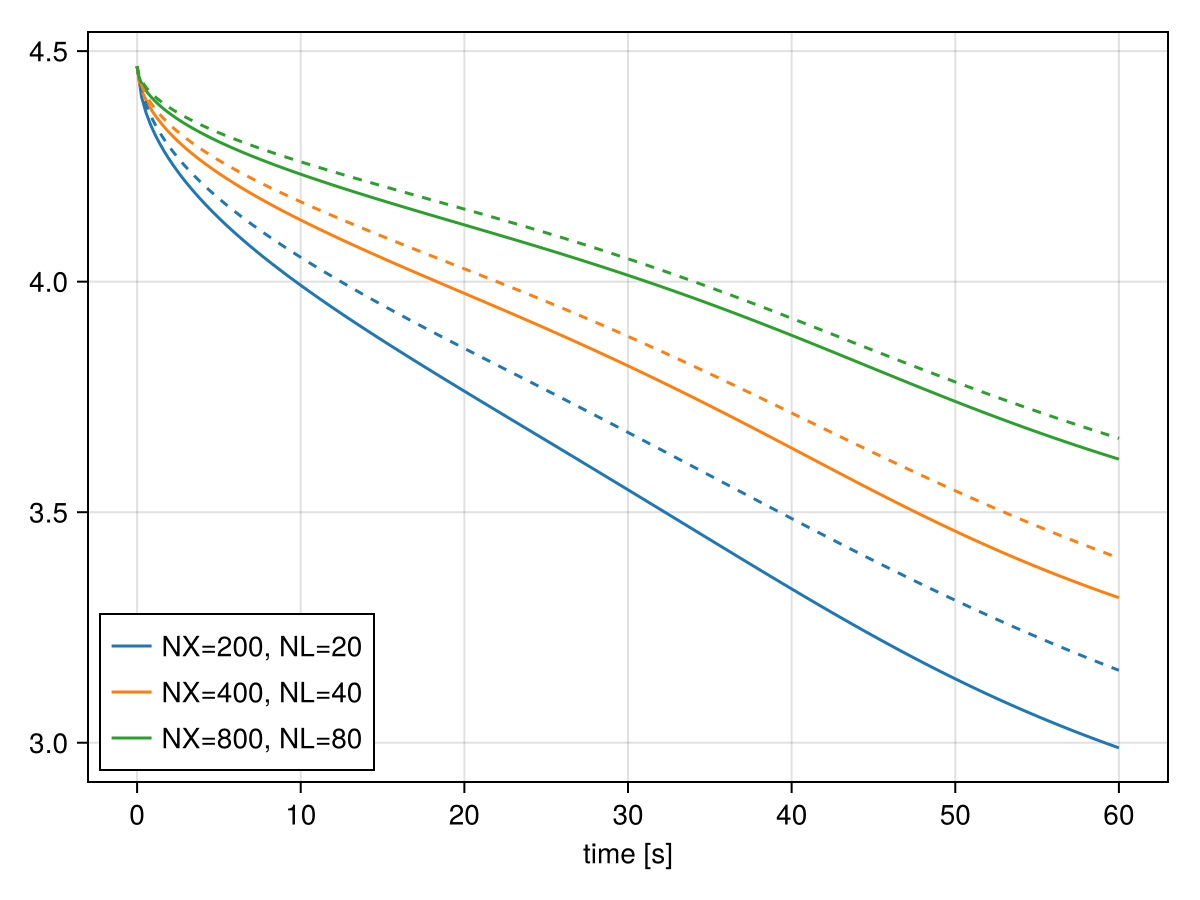}
        \caption{$L^2$-norm of the tracer in testcase~\ref{sec:analytical_sol_2d_euler}.}
        \label{fig:L2_tracer_Euler}
    \end{subfigure}
    \hfill
    \begin{subfigure}[t]{0.48\textwidth}
        \includegraphics[width=\textwidth]{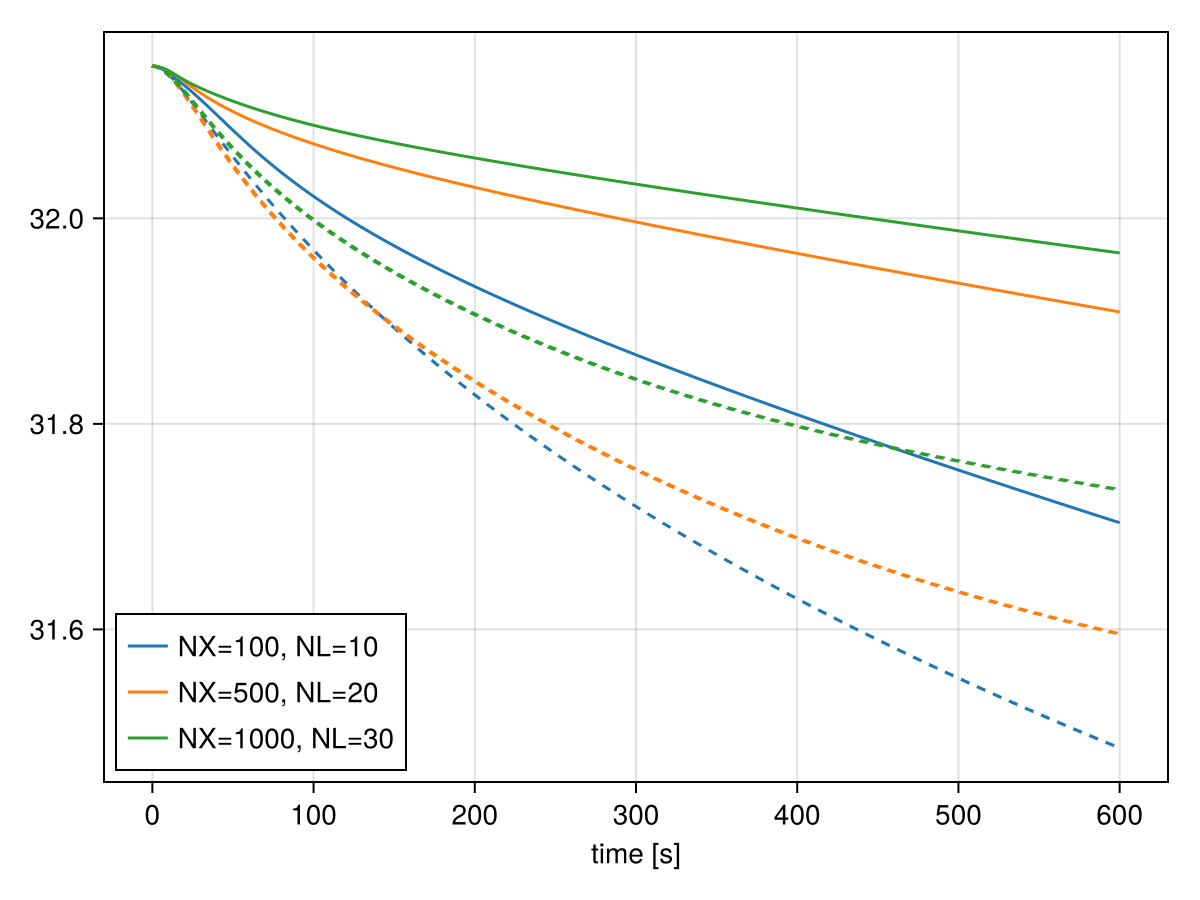}
        \caption{$L^2$-norm of the tracer in testcase~\ref{sec:wind_driven_cavity}.}
        \label{fig:L2_tracer_wind-driven-cavity}
    \end{subfigure}
    \caption{$L^2$-norm of the tracer to examine numerical diffusion. The solid line represents the splitting method, while the dashed line indicates the scheme without splitting.}
\end{figure}

\subsection{Computational cost}
For the comparison of the different schemes (barotropic-baroclinic splitting (with or without subcycling strategy) or unsplit scheme, implicit or explicit treatment of exchange terms) we perform several runs of the respective scheme and measure the averaged runtime and the total error. As a testcase we use the analytical solution for the 2d hydrostatic Euler equations given in section~\ref{sec:analytical_sol_2d_euler} until final time $t=1 \, \mathrm{s}$. We perform the simulation three times with different Froude numbers given by different values for $\alpha$. The averaged (over 10 runs) measured runtime for the different schemes is plotted against the total error $\| (h_\text{err}, hu_\text{err})^\top \|_2$ in Figure~\ref{fig:cost-error}. Note that the gain in computational cost is especially large in the low Froude setting. In addition, in low Froude situations the subcycling strategy has a bigger impact on the computational cost than the explicit or implicit treatment of the exchange terms. 
For a 1D problem computed with a first order scheme one would expect the computational cost to grow by a factor of around 4 when halving the grid size (and thus also the error) because of the double amount of cells and the double amount of time steps that have to be computed. But the simulation shows that especially for low Froude simulations and the splitting scheme the factor is lower (see Figure~\ref{fig:cost-error}).

\begin{figure}[htbp]
    \centering
    \includegraphics[width=0.9\textwidth]{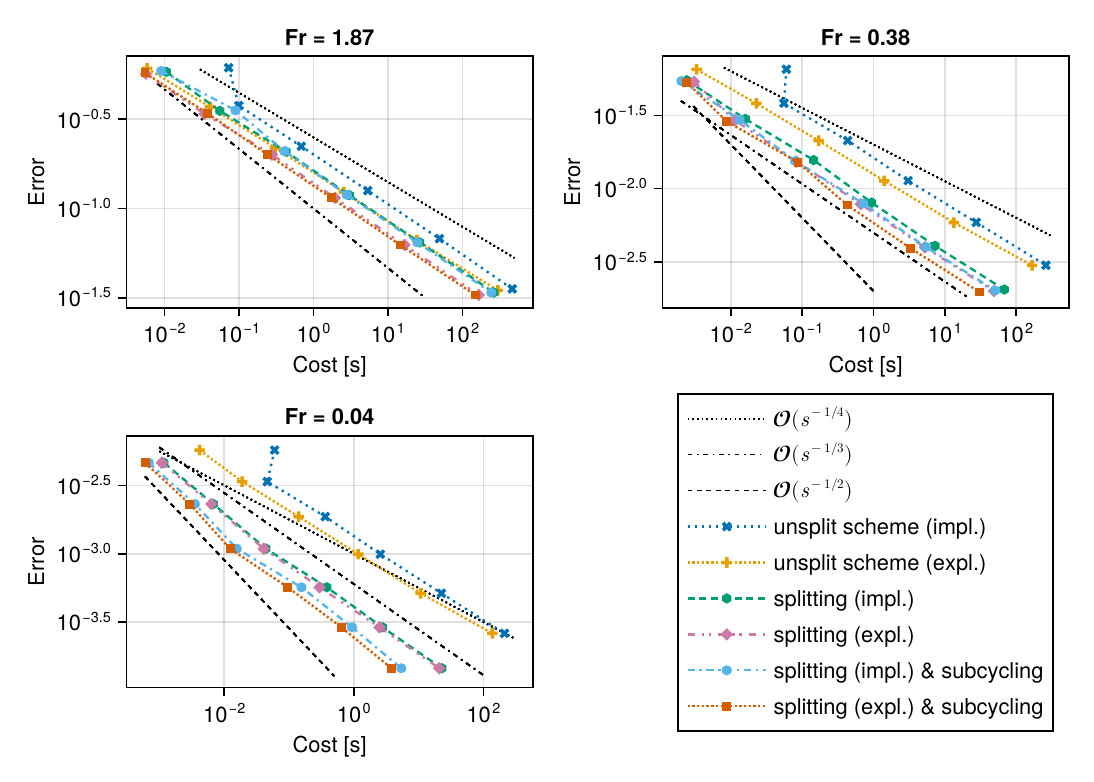}
    \caption{Comparison of the computational cost and the total error $\| (h_\text{err}, hu_\text{err})^\top \|_2$ for testcase~\ref{sec:analytical_sol_2d_euler} with different Froude numbers. Top left: $\mathrm{Fr}=1.87$ (using $\alpha = 5$), top right: $\mathrm{Fr}=0.38$ (using $\alpha = 1$), bottom left: $\mathrm{Fr}=0.04$ (using $\alpha = 0.1$).}
    \label{fig:cost-error}
\end{figure}

Furthermore we investigated the ratio of the computational cost of the barotropic-baroclinic splitting with subcycling and the scheme without splitting for different Froude numbers. The results are plotted in Figure~\ref{fig:cost_ratio} and show that the gain in computational cost is proportional to the Froude number.

\begin{figure}[htbp]
    \centering
        \includegraphics[width=0.6\textwidth]{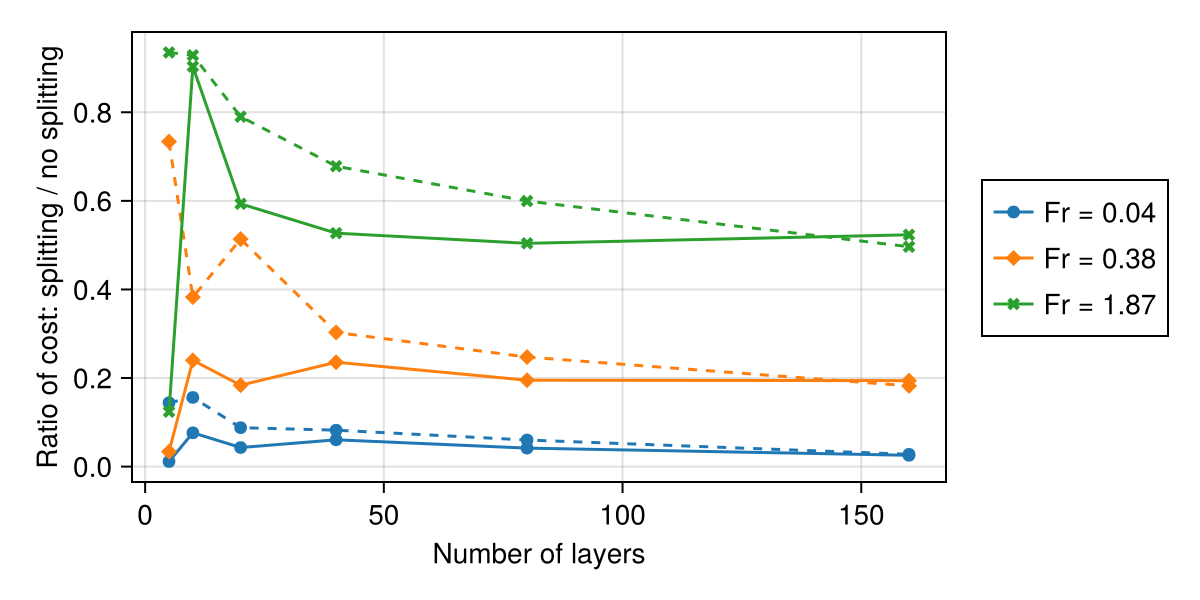}
    \caption{Comparison of the computational cost ratio (cost of barotropic-baroclinic splitting with subcycling over cost of the scheme without splitting) for various Froude numbers. The solid line represents the implicit version of the scheme, while the dashed line indicates the explicit version.}
    \label{fig:cost_ratio}
\end{figure}

\subsection{Wind driven cavity} \label{sec:wind_driven_cavity}

In this section we perform the test case of a two-layer fluid flow, initially at rest, subject to wind stress as it was performed in \cite{ABPSM11b}, but with constant density. We consider a fluid in a rectangular basin: the computational domain is given by the intervall $[0,3]$ with reflecting boundary conditions. Initially the fluid is at rest with a total water height of $h = 1$\,m. The initial temperature (tracer) distribution is given by 
\begin{equation*}
    T_0(x,z) =\begin{cases}
    25 , & \text{if } z-b \geq \frac{h}{2}\\
    8, & \text{otherwise}.
    \end{cases}
\end{equation*}
We impose a constant uniform wind stress in $x$-direction. As done in \cite{ABPSM11b} we use a vertical viscosity of $\nu = 0.003 \,\mathrm{m}^2\mathrm{s}^{-1}$, the friction coefficient $\kappa =~0.1 \,\mathrm{m}\mathrm{s}^{-1}$ and a wind velocity of $6 \, \mathrm{m} \mathrm{s}^{-1}$. We let the scheme run until final time $t=600\,\mathrm{s}$. 

The numerical results for the tracers are shown in Figure~\ref{fig:wind_driven_cavity_tracer}. The barotropic-baroclinic splitting converges much faster than the scheme without splitting. Note that the splitting seems to be less diffusive than the scheme without splitting. To investigate this further we again tracked the $L^2$-norm of the tracer over time. The results are plotted in Figure~\ref{fig:L2_tracer_wind-driven-cavity}.  This investigation reveals that, in constrast to testcase~\ref{sec:analytical_sol_2d_euler} in this testcase, the barotropic-baroclinic splitting is less diffusive than the original scheme withour splitting.

\begin{figure}[htbp]
    \centering
    \begin{subfigure}[t]{0.32\textwidth}
        \includegraphics[width=\textwidth]{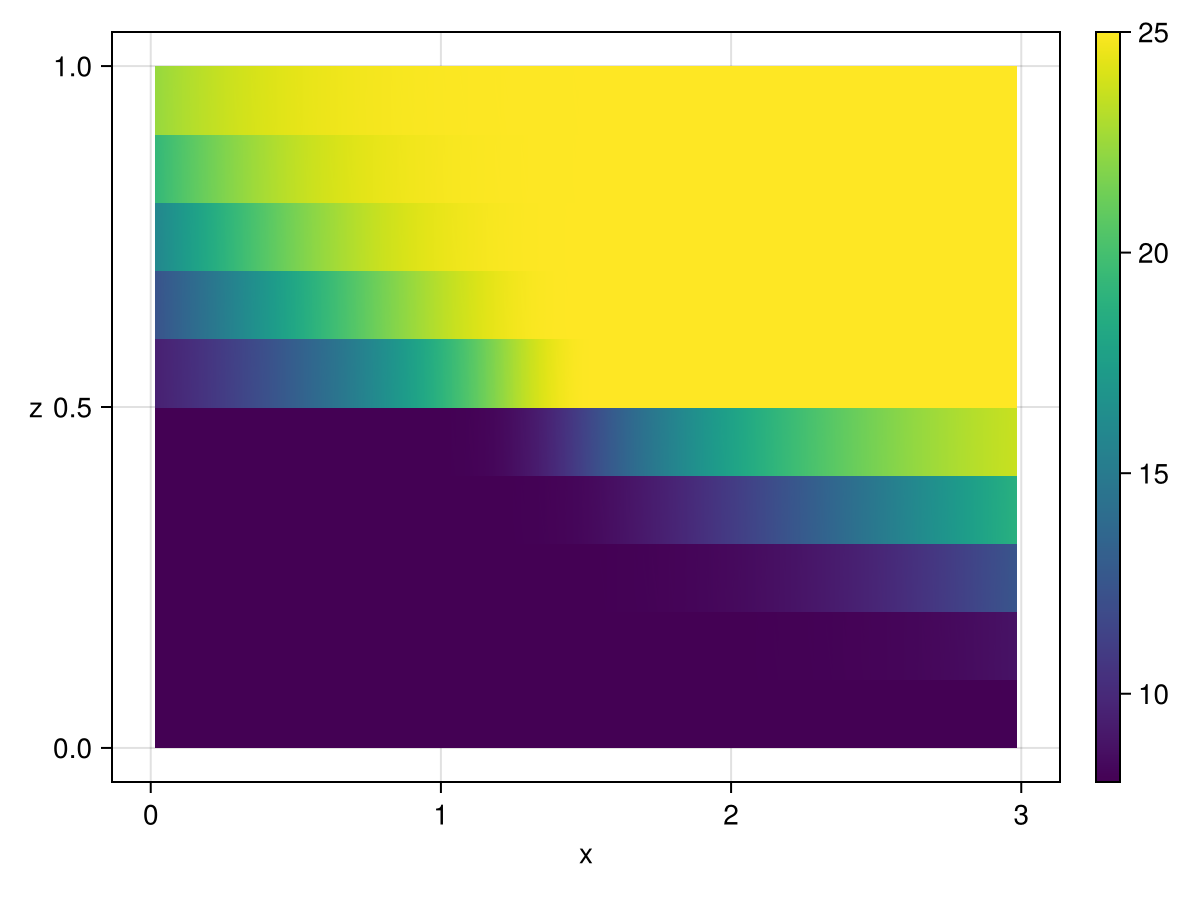}
        \caption{unsplit scheme, 100 cells, 10 layers}
    \end{subfigure}
    \begin{subfigure}[t]{0.32\textwidth}
        \includegraphics[width=\textwidth]{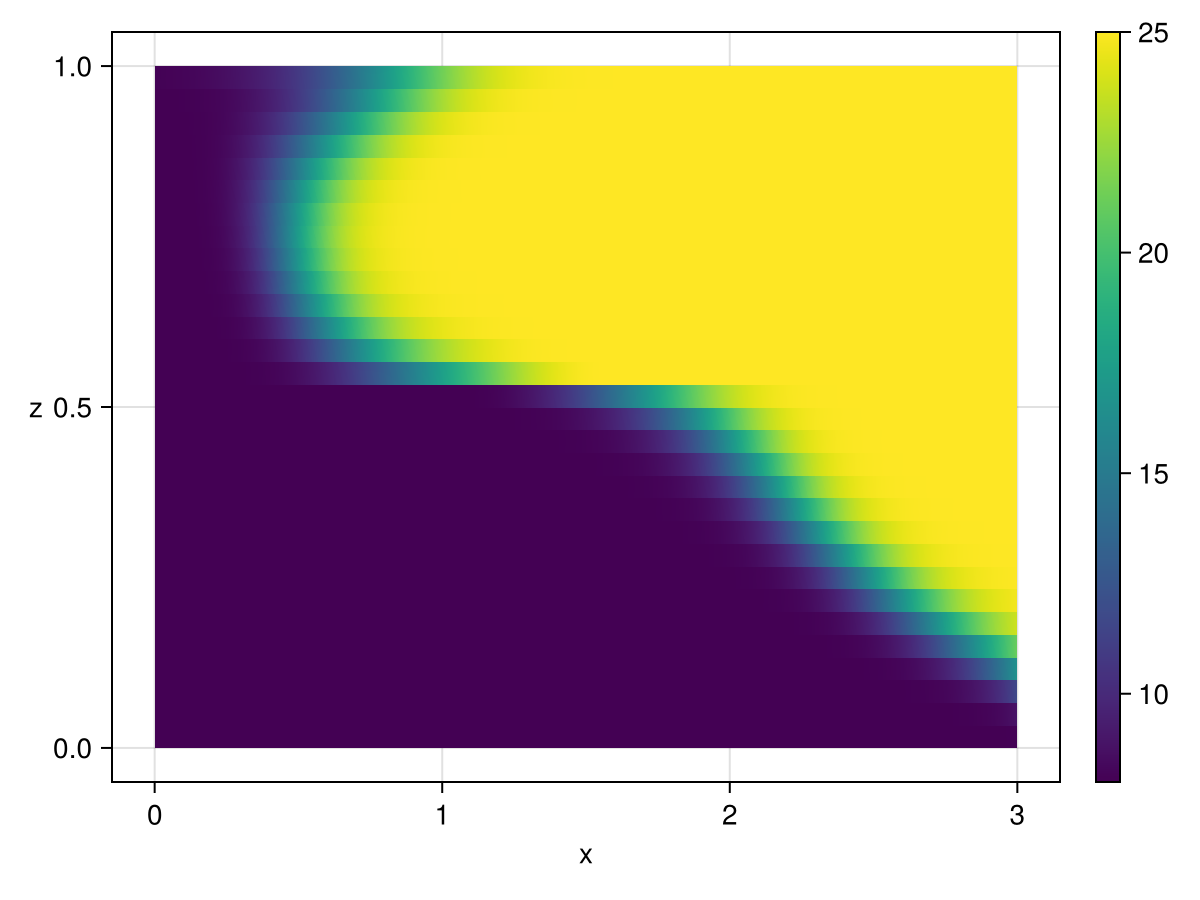}
        \caption{unsplit scheme, 1000 cells, 30 layers}
    \end{subfigure}
    \begin{subfigure}[t]{0.32\textwidth}
        \includegraphics[width=\textwidth]{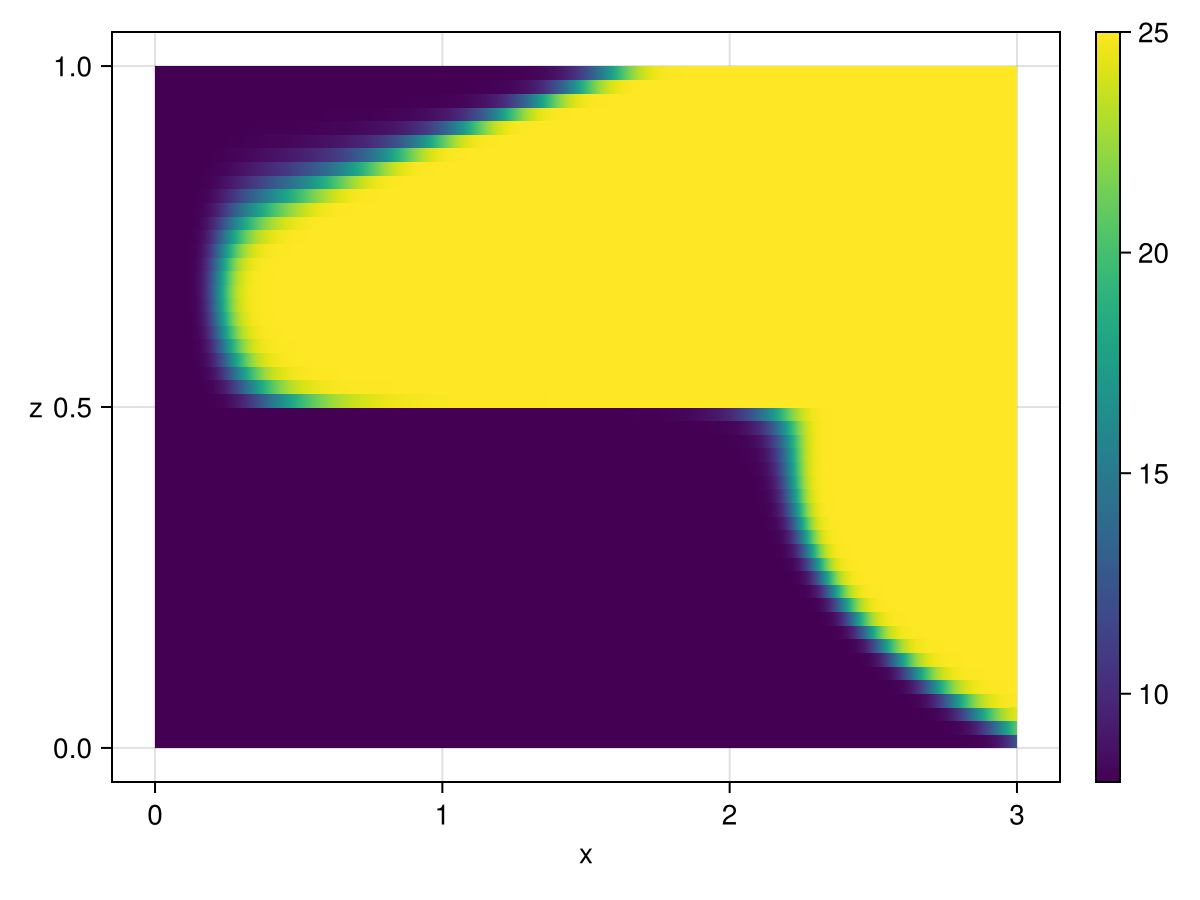}
        \caption{unsplit scheme, 8000 cells, 50 layers}
    \end{subfigure}
    
    \begin{subfigure}[t]{0.32\textwidth}
        \includegraphics[width=\textwidth]{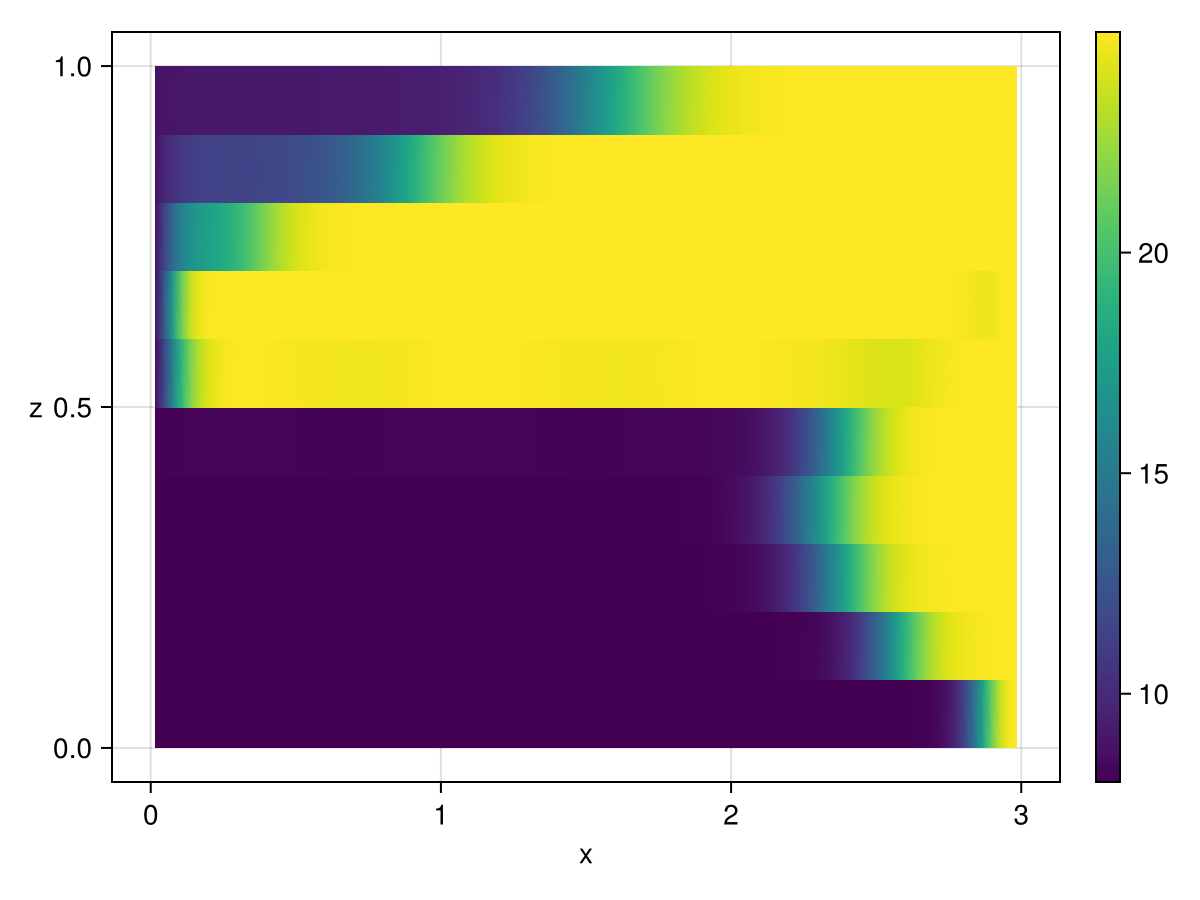}
        \caption{splitting, 100 cells, 10 layers}
    \end{subfigure}
    \begin{subfigure}[t]{0.32\textwidth}
        \includegraphics[width=\textwidth]{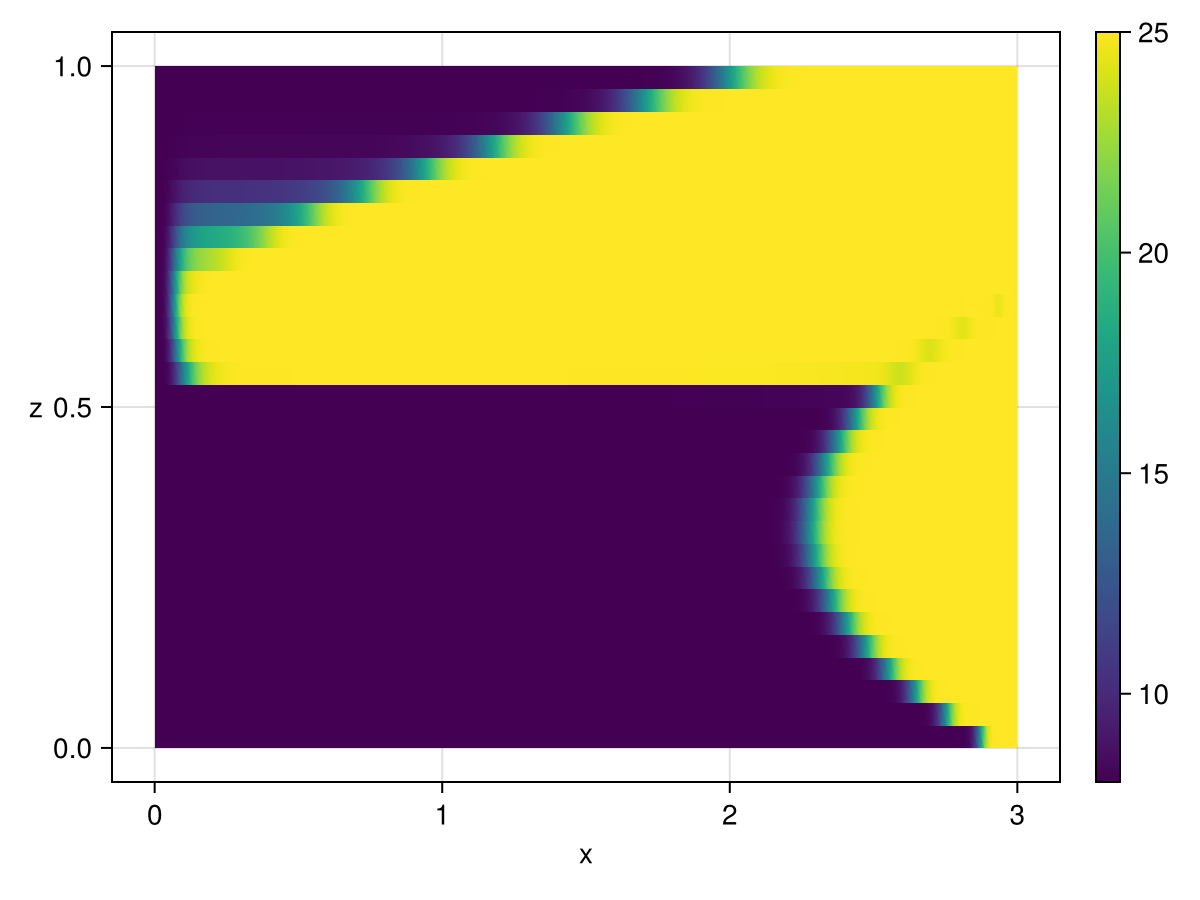}
        \caption{splitting, 1000 cells, 30 layers}
    \end{subfigure}
    \begin{subfigure}[t]{0.32\textwidth}
        \includegraphics[width=\textwidth]{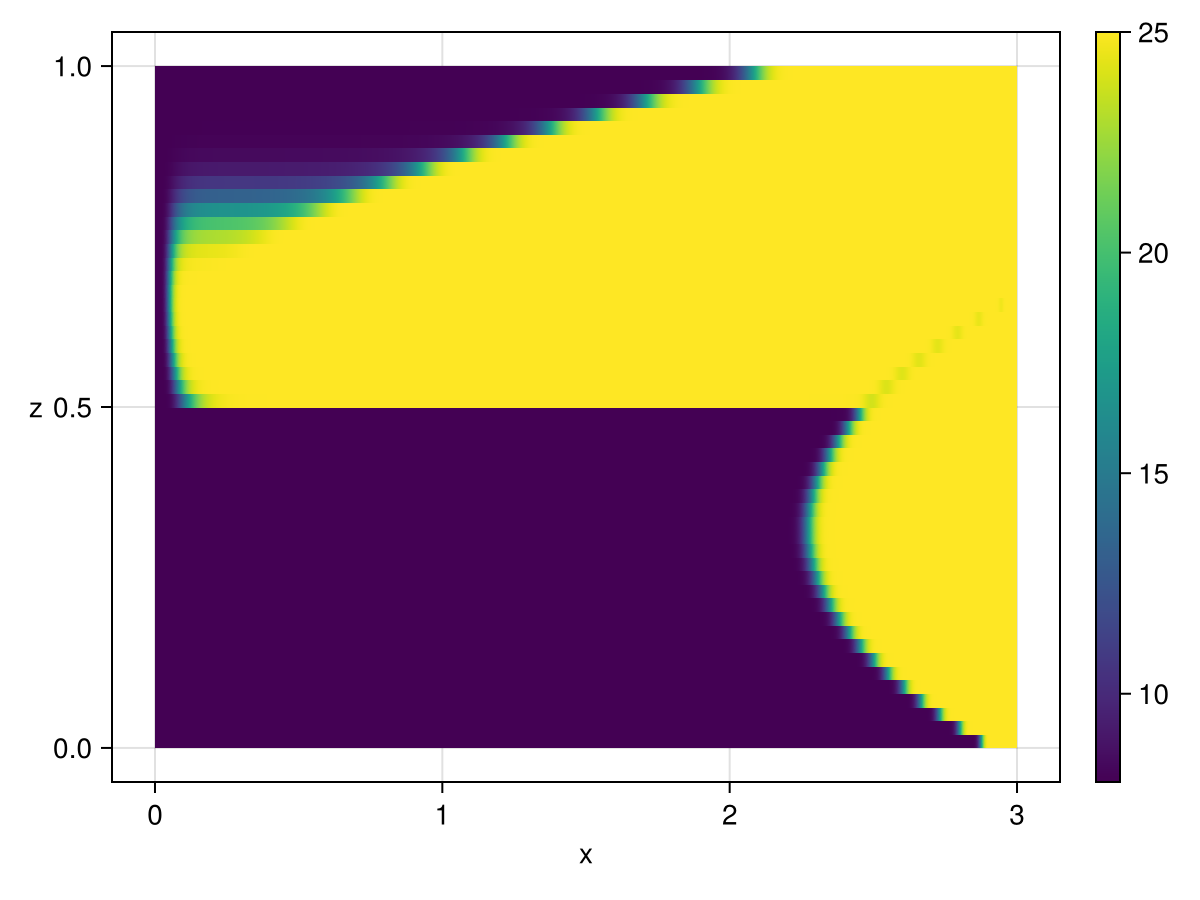}
        \caption{splitting, 4000 cells, 50 layers}
    \end{subfigure}
    \caption{Tracer distribution of the wind driven cavity (testcase~\ref{sec:wind_driven_cavity}) at time $t = 600 \, \mathrm{s}$.}
    \label{fig:wind_driven_cavity_tracer}
\end{figure}

So far, horizontal viscosity was neglegted in the systems. 
We add the horizontal viscosity ($\nu_\mathrm{hor} = \nu = 0.003 \, \,\mathrm{m}^2\mathrm{s}^{-1}$) both to the splitting and the scheme without splitting. The results are plotted in Figure~\ref{fig:wind_driven_cavity_tracer_hor_visc=0.003}.

Since the horizontal viscosity is described by second order terms it gives a severe time step restriction depending on $\Delta x^2$. Thus the grids in the simulations are not as refined as before. But again, the splitting converges faster than the scheme without splitting.

\begin{figure}[htbp]
    \centering
    \begin{subfigure}[t]{0.32\textwidth}
        \includegraphics[width=\textwidth]{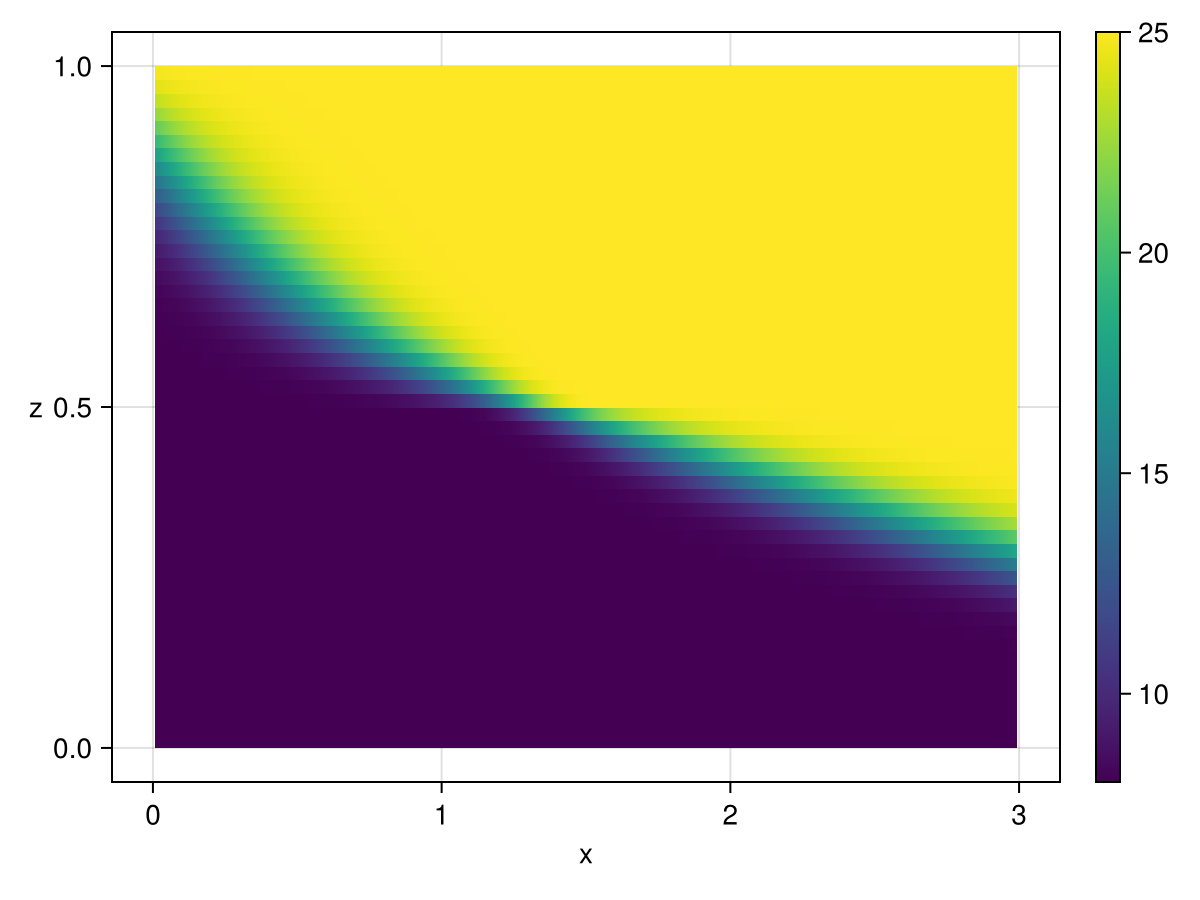}
        \caption{unsplit scheme, 200 cells, 50 layers}
    \end{subfigure}
    \begin{subfigure}[t]{0.32\textwidth}
        \includegraphics[width=\textwidth]{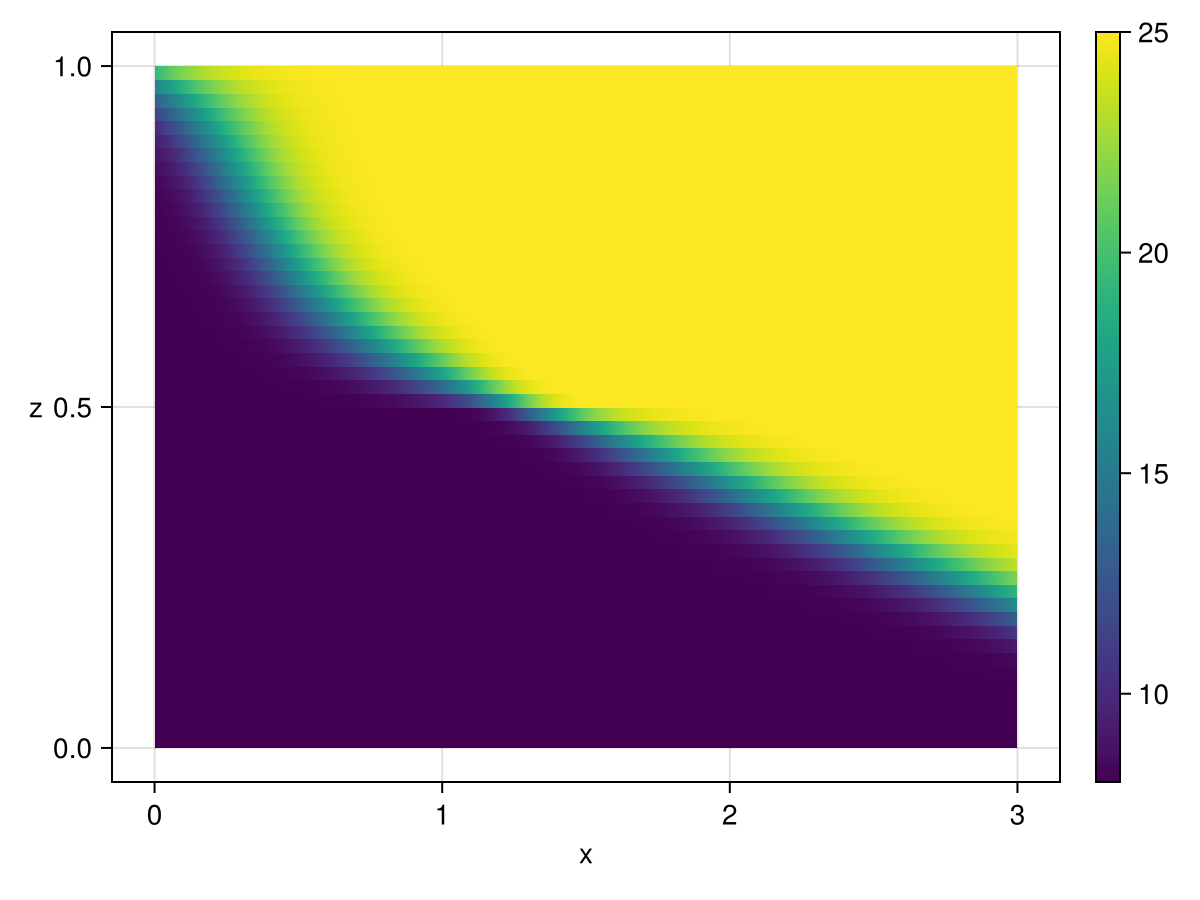}
        \caption{unsplit scheme, 800 cells, 50 layers}
    \end{subfigure}
    
    \begin{subfigure}[t]{0.32\textwidth}
        \includegraphics[width=\textwidth]{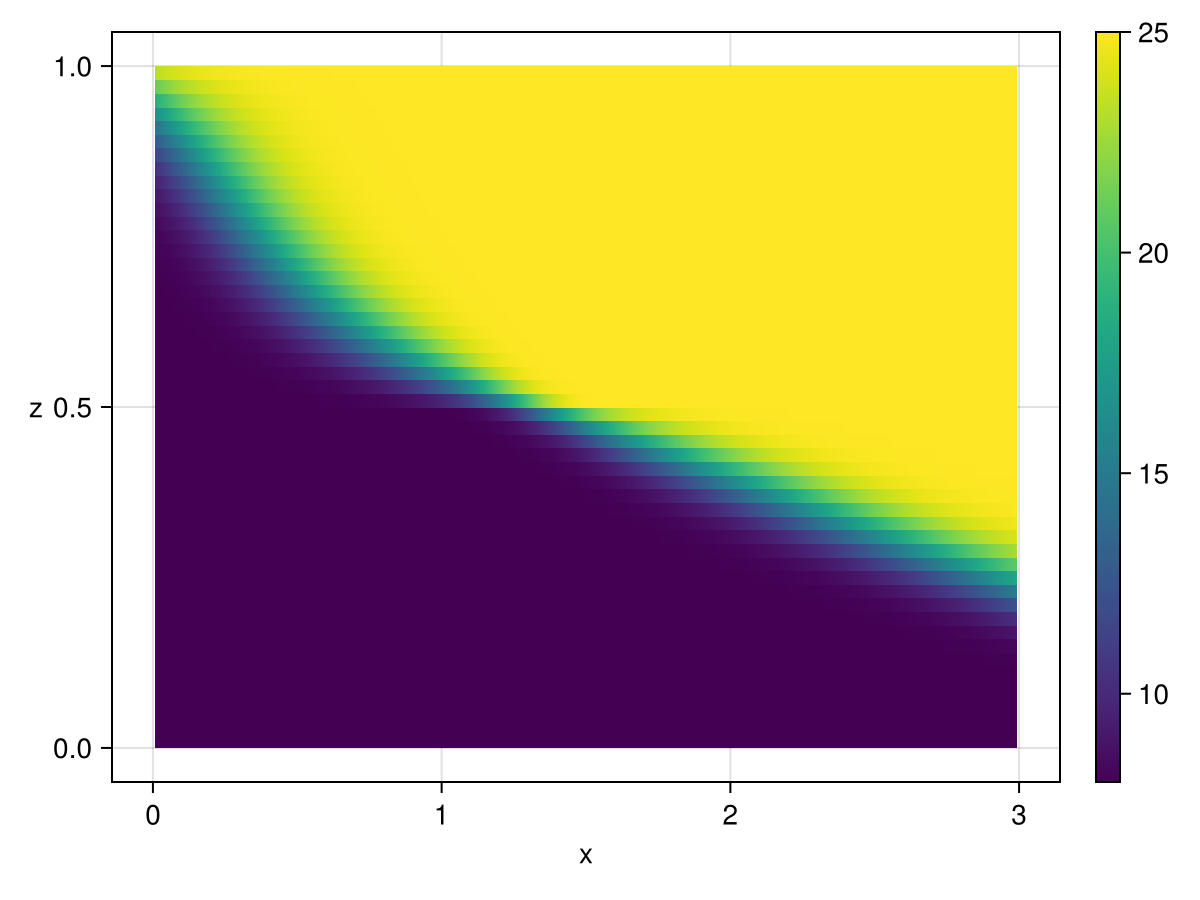}
        \caption{splitting, 200 cells, 50 layers}
    \end{subfigure}
    \begin{subfigure}[t]{0.32\textwidth}
        \includegraphics[width=\textwidth]{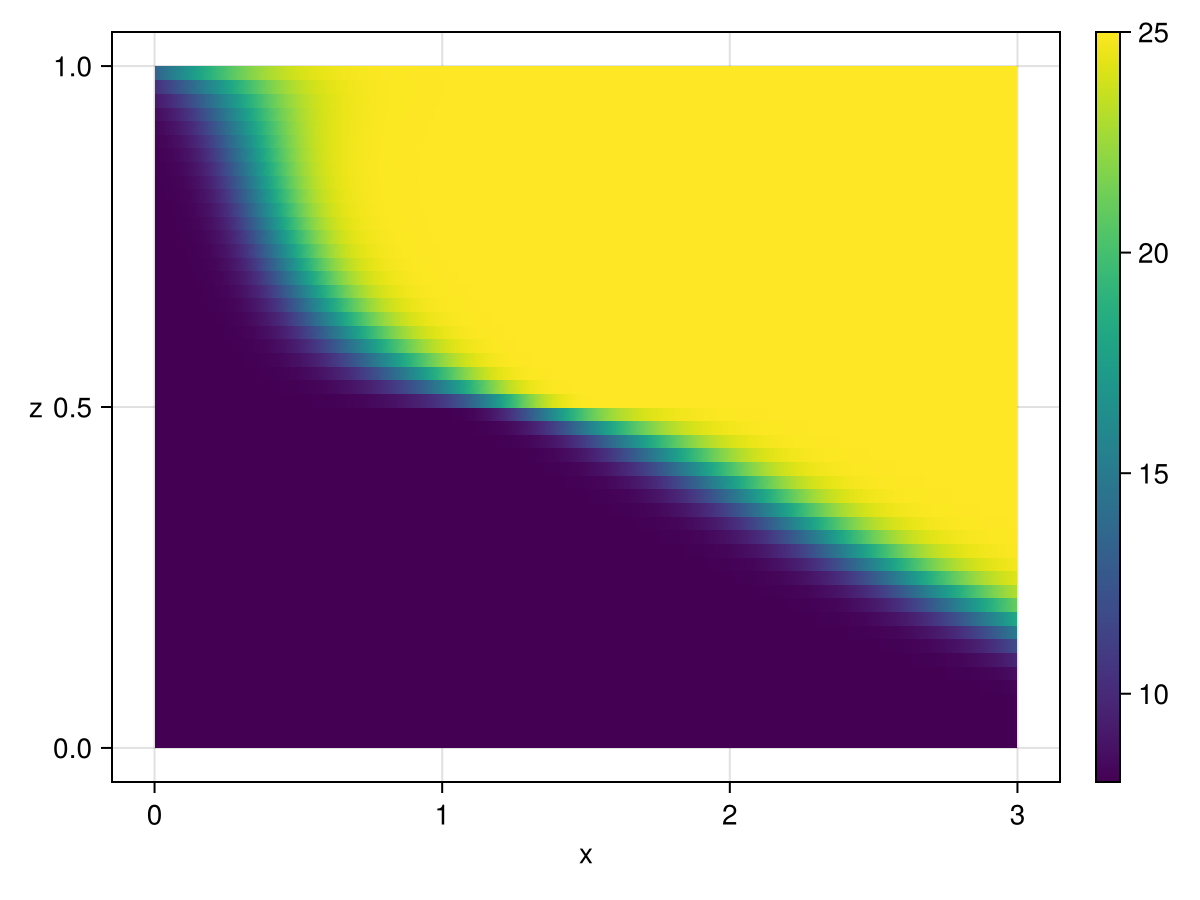}
        \caption{splitting, 800 cells, 50 layers}
    \end{subfigure}
    \hfill
    \caption{Tracer distribution of the wind driven cavity (testcase~\ref{sec:wind_driven_cavity}) at time $t = 600 \, \mathrm{s}$ with horizontal viscosity $\nu_\mathrm{hor} = \nu = 0.003 \, \,\mathrm{m}^2\mathrm{s}^{-1}$.}
    \label{fig:wind_driven_cavity_tracer_hor_visc=0.003}
\end{figure}

\subsection{Well-balancing for the lake at rest} \label{sec:wb_lar}

The barotropic-baroclinic splitting is well-balanced for the lake at if the shallow water solver~\eqref{eq:FV-SW} used in the barotropic step is well-balanced. Since we used the hydrostatic reconstruction of Audusse et al. \cite{ABBKP04,CN17}, which is well-balanced for the lake at rest, our barotropic-baroclinic splitting is also well-balanced. We verify this property with the following numerical test case taken from \cite{CN22}: On the computational domain $[0,4] \times [0,2]$ we consider a volcano shaped bottom topography with a lake in the crater of the volcano and water surrounding the volcano. The initial data is given by
\begin{align*}
    r(x,y) &= 2(x-2)^2 + 4(y-1)^2, \\
    z_b(x,y) &= \begin{cases}
    1-0.8 \exp(-r(x,y)), & r(x,y) \leq \ln \frac85 \\
    0.8 \exp(-r(x,y)), & \text{else}
    \end{cases} \\
    h(x,y) &= \begin{cases}
    \max(0.45 - z_b(x,y),0), & r(x,y) \leq \ln \frac85 \\
    \max(0.3 - z_b(x,y),0), & \text{else}
    \end{cases}\\
    u &\equiv 0 , \quad v \equiv 0
\end{align*}
and is plotted in Figure~\ref{fig:wb_lake-at-rest_initial}. We perform the simulation on a grid of $200 \times 100$ cells with 10 layers until final time $t=2$. The results are shown in Figure~\ref{fig:wb_lake-at-rest_final}. Since the deviation of the numerical solution from the lake at rest is in order of the machine accuracy the scheme is well-balanced.

\begin{figure}[htbp]
    \centering
    \begin{subfigure}[t]{0.32\textwidth}
        \includegraphics[width=\textwidth]{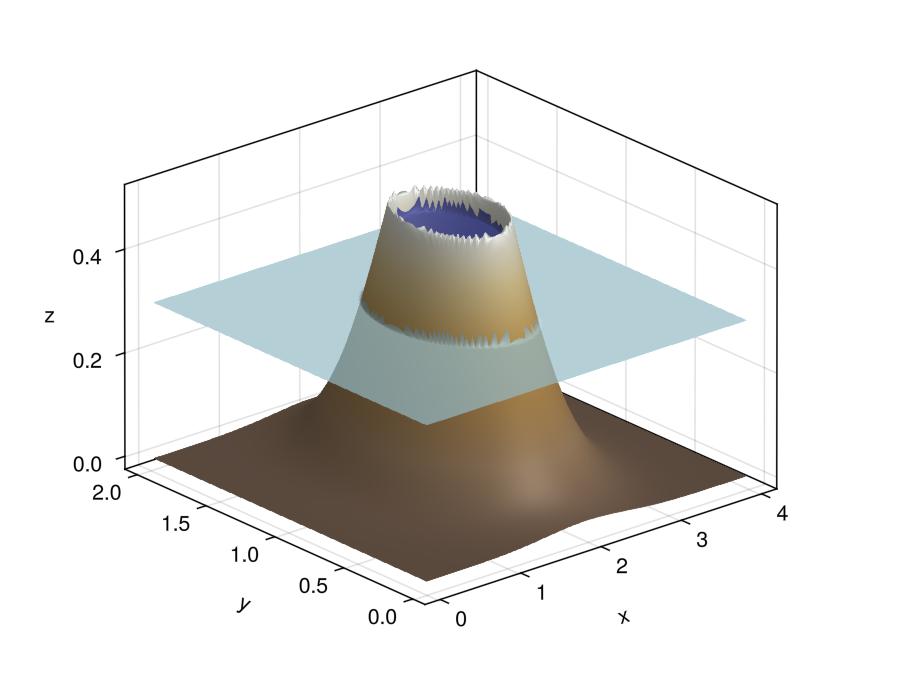}
        \caption{Bottom $z_b$ and water surface $\eta$.}
    \end{subfigure}
    \qquad
    \begin{subfigure}[t]{0.32\textwidth}
        \includegraphics[width=\textwidth]{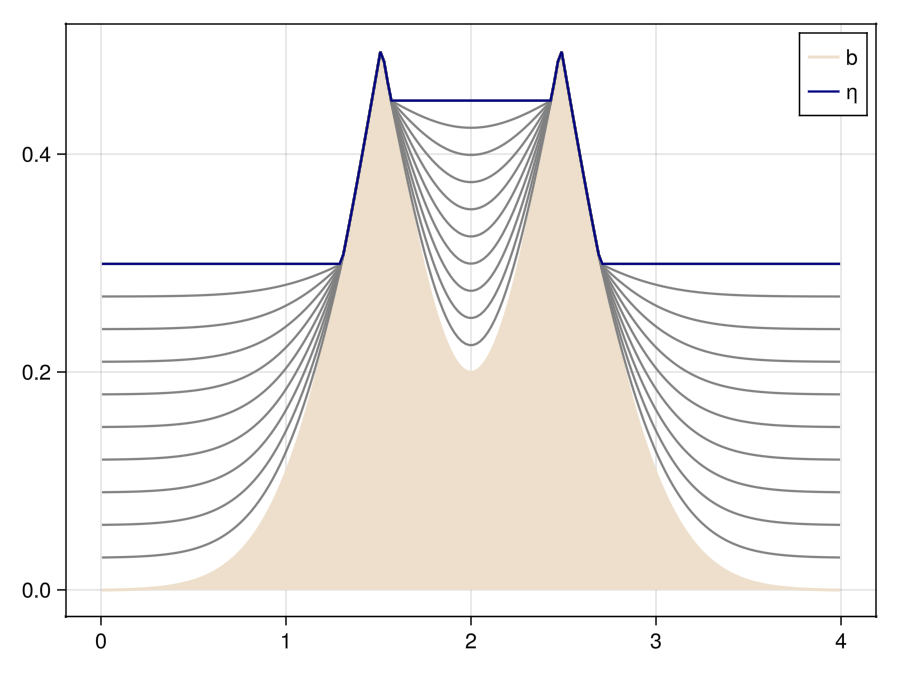}
        \caption{Cut at position $y=1$.}
    \end{subfigure}
            
    \caption{Initial data of lake at rest testcase~\ref{sec:wb_lar}.}
    \label{fig:wb_lake-at-rest_initial}
\end{figure}
    
\begin{figure}[htbp]
    \centering
    \begin{subfigure}[t]{0.32\textwidth}
        \includegraphics[width=\textwidth]{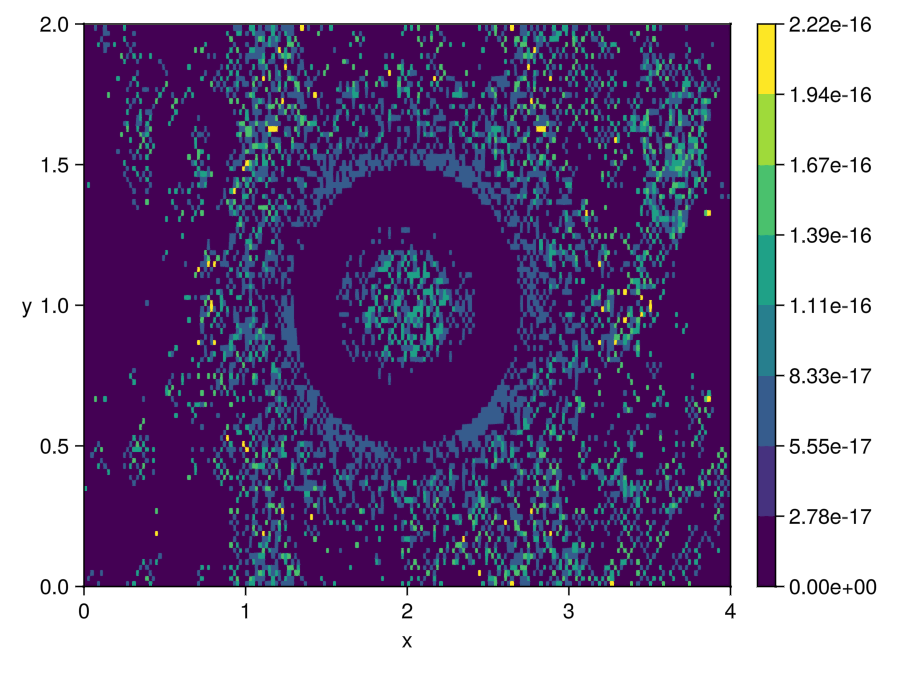}
        \caption{Error of water surface $\eta$.}
    \end{subfigure}
    \hfill
    \begin{subfigure}[t]{0.32\textwidth}
        \includegraphics[width=\textwidth]{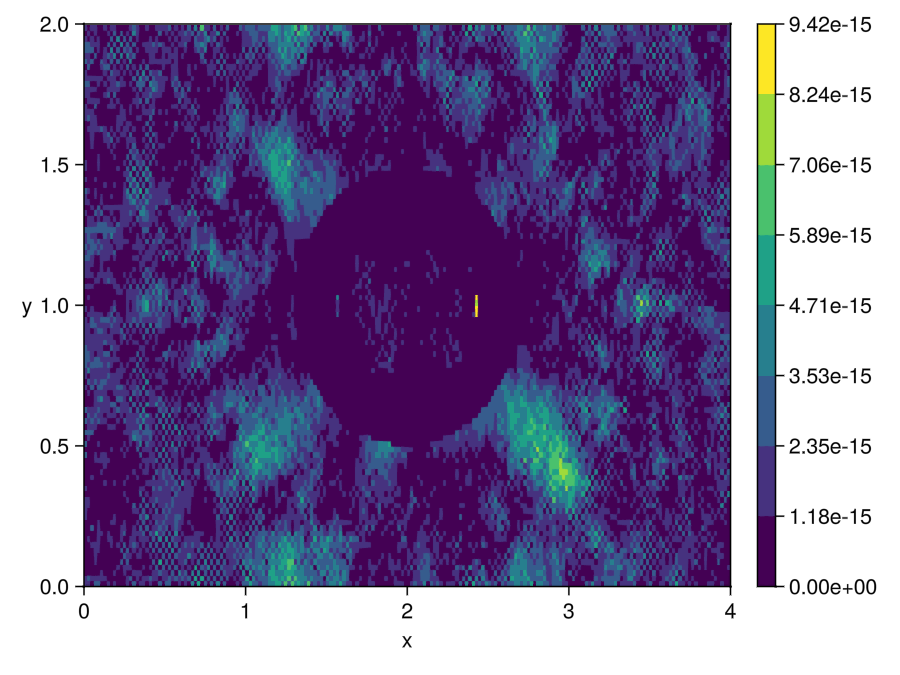}
        \caption{$\sum_{\alpha=1}^N |u_\alpha|$}
    \end{subfigure}
    \quad
    \begin{subfigure}[t]{0.32\textwidth}
        \includegraphics[width=\textwidth]{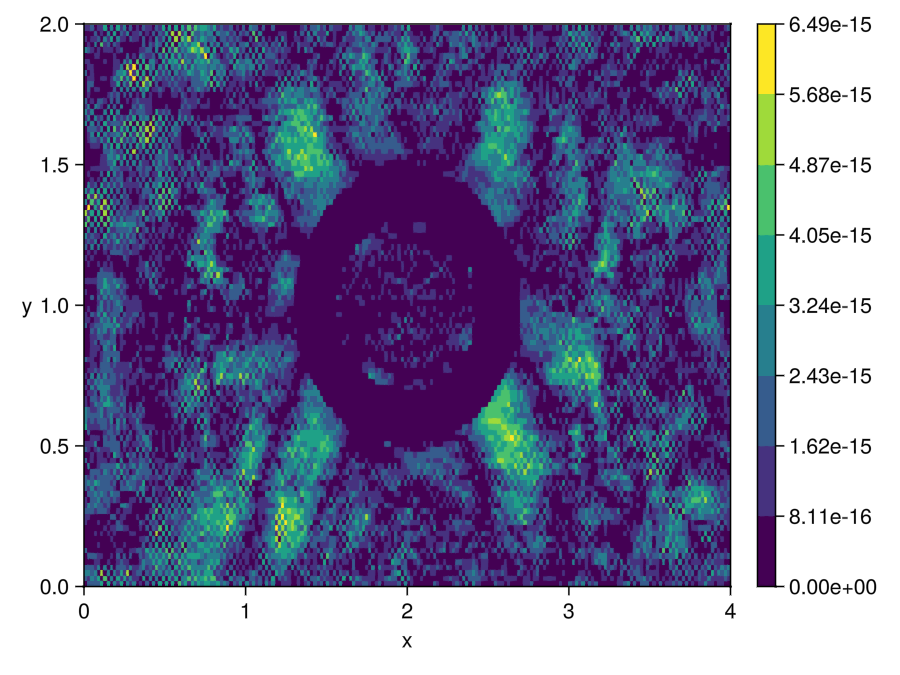}
        \caption{$\sum_{\alpha=1}^N |v_\alpha|$}
    \end{subfigure}
    \caption{Devitation from the lake at rest steady state in testcase~\ref{sec:wb_lar} at time $t=2$.}
    \label{fig:wb_lake-at-rest_final}
\end{figure}

\subsection{Western boundary current and "linear well-balancing" for the geo\-strophic equilibrium}

In this section we investigate the effect of the "linearly well-balancing" strategy of \cite{GCGP24} on the preservation of geostrophic equilibrium in the context of the barotropic-baroclinic splitting.

The goal of this strategy is to preserve the geostrophic equilibrium $g \nabla (h+b) = f (v,-u)^\top$ with a linear velocity field. To obtain this we first perform a linear bottom reconstruction $b_{ij}^R(x,y) = b_{ij} + \mathcal{M}_{ij}(x-x_i) + \mathcal{N}_{ij}(y-y_j)$ to have access to $\nabla b^R_{ij}(x,y)$. Using the cell values $u_{ij}$ and $v_{ij}$ we construct a piecewise linear divergence free velocity field
\begin{align*}
    u_{ij}^R(x,y) &= u_{ij} + \mathcal{A}_{ij}(x-x_i) + \mathcal{B}_{ij}(y-y_j),\\
    v_{ij}^R(x,y) &= v_{ij} + \mathcal{C}_{ij}(x-x_i) - \mathcal{A}_{ij}(y-y_j)
\end{align*}
solving a least squares minimization problem using the neighbouring cells. Note that with this choice of the coefficients the velocity field $(u^R,v^R)$ is already divergence free. Then we construct 
\begin{equation*}
    h_{ij}^R(x,y) = h_{ij} + \alpha_{ij}(x-x_i) + \beta_{ij}(y-y_j) + \gamma_{ij}(x-x_i)^2 + \delta_{ij}(y-y_j)^2 + \lambda_{ij}(x-x_i)(y-y_j)
\end{equation*}
such that 
\begin{equation*}
    \nabla h_{ij}^R = \frac{f}{g} (v_{ij}^R,-u_{ij}^R)^\top - \nabla b_{ij}.
\end{equation*}
This condition is used to determine the coefficients
\begin{align*}
    \alpha_{ij} = \frac{f}{g} v_{ij} - \mathcal{M}_{ij}, \quad 
    \beta_{ij} = -\frac{f}{g} u_{ij} - \mathcal{N}_{ij}, \quad
    \gamma_{ij} = \frac{f}{2g} \mathcal{C}_{ij}, \quad 
    \delta_{ij} = -\frac{f}{2g} \mathcal{B}_{ij}, \quad 
    \lambda_{ij} = -\frac{f}{g} \mathcal{A}_{ij}.
\end{align*}
After having reconstructed $h^R_{ij}$, $u^R_{ij}$ and $v^R_{ij}$, we compute the numerical fluxes using the evaluations of the reconstructions at the midpoint of the cell interfaces. Finally we compute the source term via 
\begin{align*}
    S_{ij} 
    &= \int_{C_{ij}} fh_{ij}^R(v_{ij}^R,-u_{ij}^R)^\top
    = \int_{C_{ij}} g h_{ij}^R (\nabla h_{ij}^R + \nabla b_{ij}) \\
    &= \int_{C_{ij}} \frac12 g(\nabla h_{ij}^R)^2 + \int_{C_{ij}} gh_{ij}^R\nabla b_{ij} 
    = \int_{\Gamma_{ij}} \frac12 g (h_{ij}^R)^2 \cdot \vec{n} + \int_{C_{ij}} gh_{ij}^R\nabla b_{ij}.
\end{align*}


To test this strategy, we perform a large scale simulation and perform the the testcase Henry Stommel investigated in his paper \cite{Stommel48}. Stommel considered a rectangular ocean $\Omega = [0,\lambda] \times [0,b]$ with $\lambda = 10^7 \,\mathrm{m}$ and $b = 2\pi\cdot 10^6 \, \mathrm{m}$. The ocean is considered as a homogeneous layer of constant depth of $200\,\text{m}$. At initial time the ocean is at rest. The imposed wind stress is given by $\tau^x = -F\cos \left( \frac{\pi y}{b} \right)$ and $\tau^y = 0$ with $F=0.1 \, \mathrm{N}\mathrm{m}^{-2}$. The bottom friction coefficient is given by $\kappa = 0.02$. As boundary conditions we use walls. Stommel has shown that using the beta-plane approximation for the Coriolis parameter $f = f_0 + \beta_0 y$ results in a stationary western boundary current that is in geostrophic equilibrium. In contrast to Stommel we choose a midlatitude setting for our simulation, so we set $f_0 = 2.5 \cdot 10^{-5}$ and $\beta_0 = 10^{-11}$.  The exact linearized solution of Stommel is plotted in Figure~\ref{fig:Stommel}. The maximum speed of the western boundary current is $2\,\mathrm{ms}^{-1}$. The width of the western boundary current is less than $100\,\mathrm{km}$.

\begin{figure}[htbp]
    \centering
    \begin{subfigure}[t]{0.31\textwidth}
        \includegraphics[width=\textwidth]{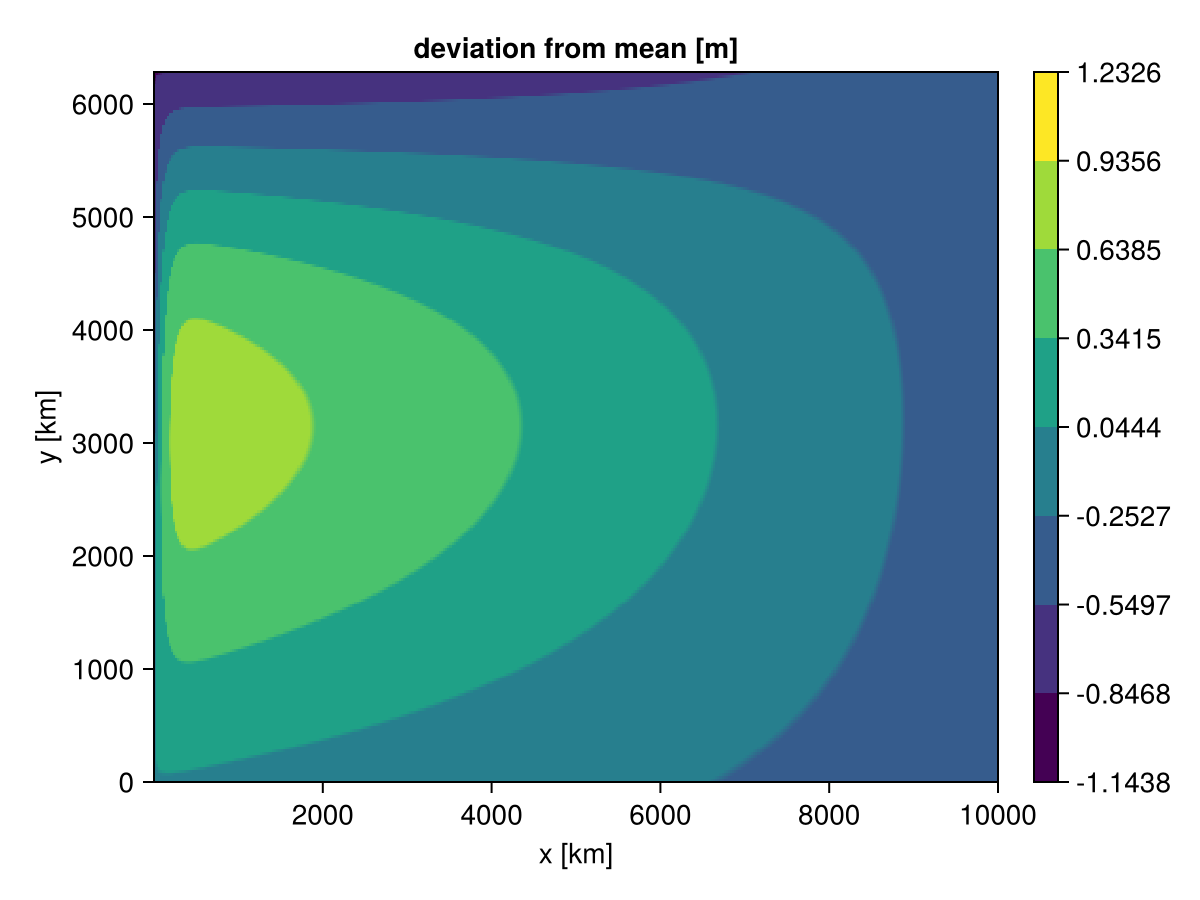}
        \caption{Deviation from mean water height.}
    \end{subfigure}
    \hfill
    \begin{subfigure}[t]{0.31\textwidth}
        \includegraphics[width=\textwidth]{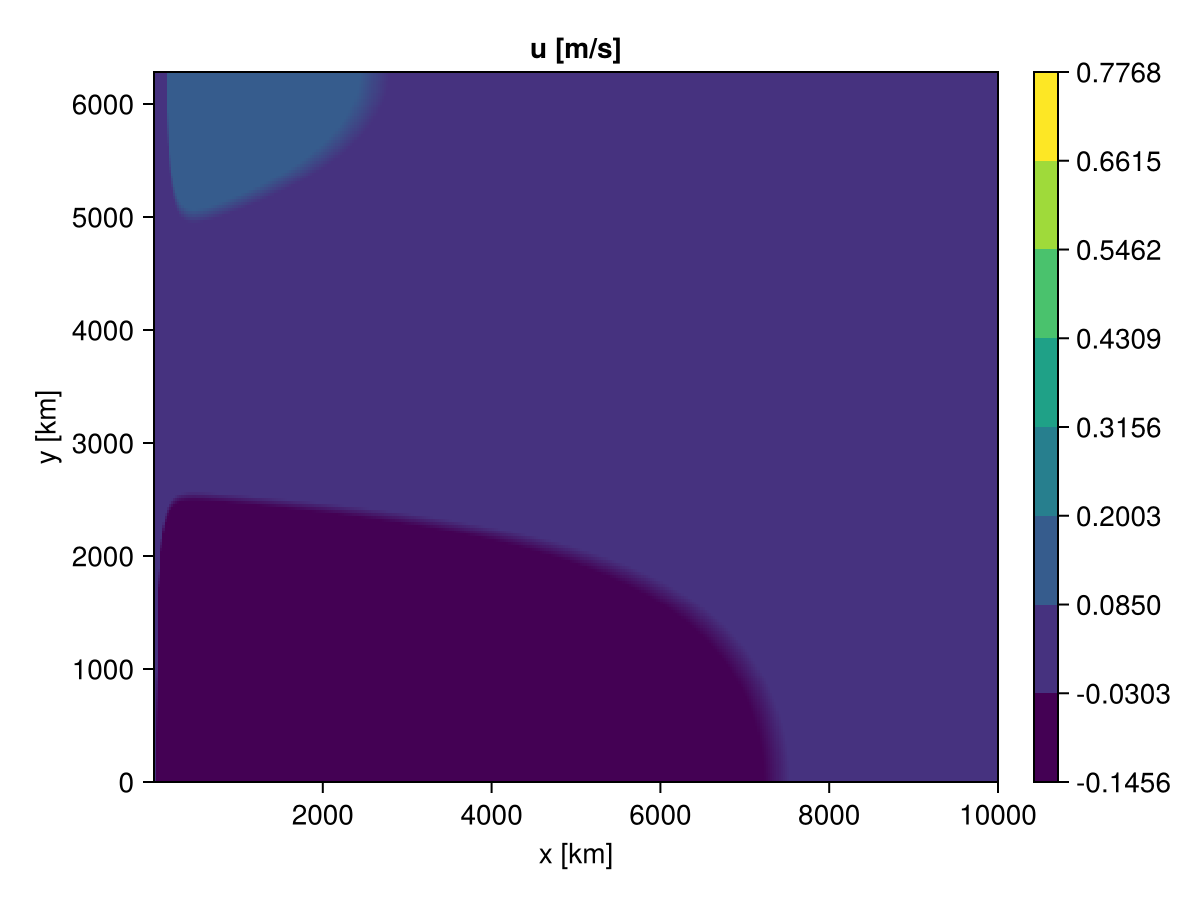}
        \caption{Depth averaged zonal velocity $\bar{u}$.}
    \end{subfigure}
    \hfill
    \begin{subfigure}[t]{0.31\textwidth}
        \includegraphics[width=\textwidth]{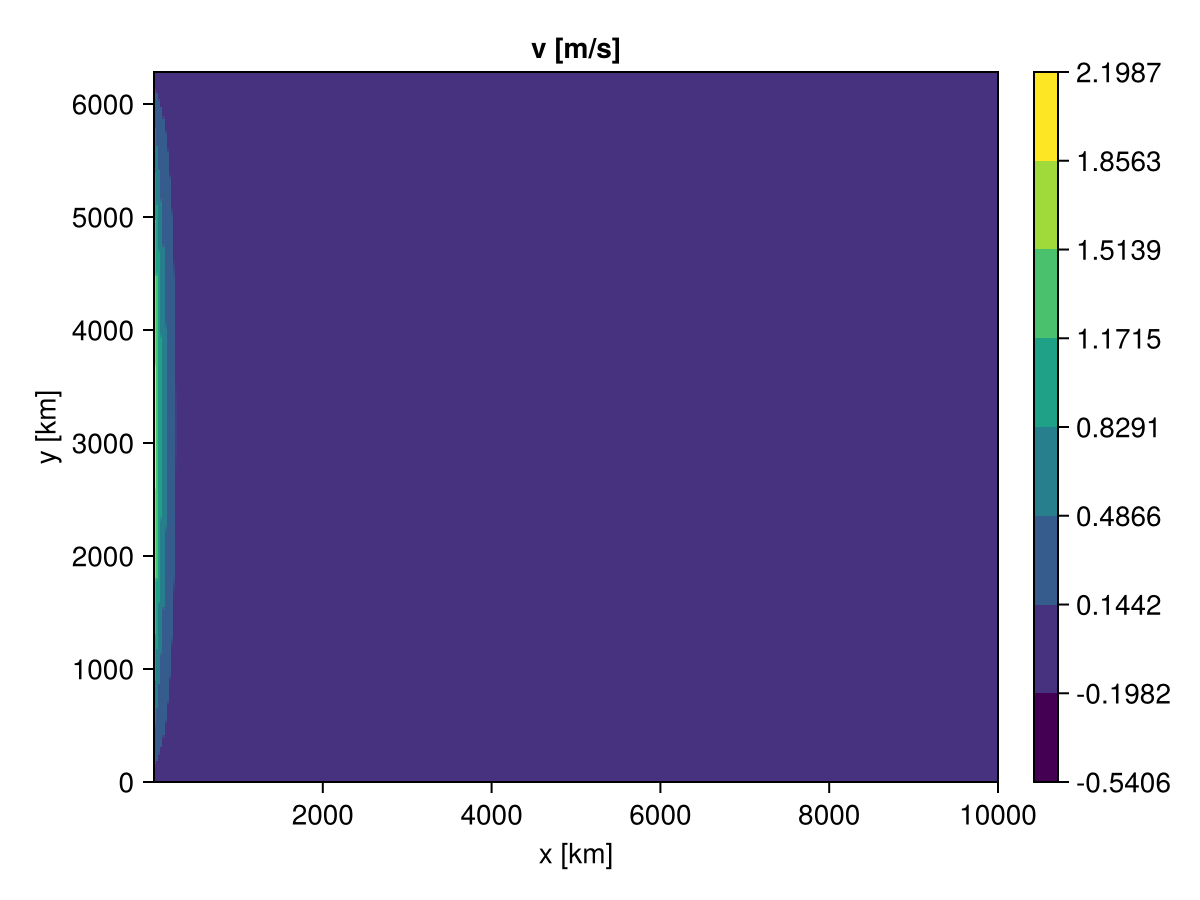}
        \caption{Depth averaged meridional velocity $\bar{v}$.}
    \end{subfigure}
    \caption{Exact linearized solution for a western boundary current of Stommel.}
    \label{fig:Stommel}
\end{figure}

For our simulation we use the fastest version of the barotropic-baroclinic splitting (with explicit treatment of the exchange terms and the subcycling strategy). We run the scheme on a grid of $200 \times 120$ cells and $50$ layers until final time $t=10 \, \mathrm{years}$ and compare the numerical results obtained with and without the use of the well-balancing strategy detailed above.
Since the western boundary current is only created when the beta-plane approximation $f = f_0 + \beta_0 y$ is done, we use a variable Coriolis coefficient $f=f_{j}$ in each cell. The western boundary current appears more narrow and with higher speed (approximately $1.39\,\mathrm{ms}^{-1}$ compared to $0.51 \,\mathrm{ms}^{-1}$) using the well-balancing strategy. The results of the simulations are plotted in Figure~\ref{fig:wbc_no-wb_vs_wb}. One can observe that the modification of the shallow water solver by using the linear well-balancing strategy has a big impact on the solution, even on with a first order scheme on a coarse mesh.

It is quite difficult to compare the theoretical results of Stommel's illustrated on Figure~\ref{fig:Stommel} with our numerical simulations on Figure~\ref{fig:wbc_no-wb_vs_wb} for at least two reasons:
\begin{itemize}
    \item Stommel model is a one layer linearized model in primitive variables, while we solve a multilayer nonlinear model with conservative variables. The vertical diffusion coefficient has been chosen in accordance with the choice in OGCMs.
    \item The boundary layer is strongly influenced by the dissipating mechanisms taken in consideration in the equation and the presence of nonlinear effects~\cite{B63},~\cite{B86}. In our simulation the boundary current width and the grid size approximately are comparable. As a result the numerical dissipation is quite strong near the western boundary, resulting in a non-controlled dissipating mechanism not explicitly appearing in the equation but of the same order of magnitude, if not higher, than the frictionnal term. 
\end{itemize}
Our interpretation of the numerical results is the following. Since we are using a Rusanov scheme we can argue that the numerical diffusion writes as a (space and time varying) laplacian term and that the solution of the model is closer to the solution of (nonlinear) Munk model. Then the shape of the western boundary layer depends on the Reynolds number $Re=\frac{L_s L}{A}$ where $L_s$ is the interior velocity flow and $A$ is the diffusion coefficient. 
With the original Rusanov scheme the numerical diffusion is quite high and the Reynolds number is low. On the other hand the well-balancing strategy results in a much lower numerical diffusion and a higher Reynolds number. In those circumstances the elongation of the western boundary current in $h$ and the eddy in velocity is documented~\cite[Figure 3]{B63}.

\begin{figure}[htbp]
    \centering
    \begin{subfigure}[t]{0.31\textwidth}
        \includegraphics[width=\textwidth]{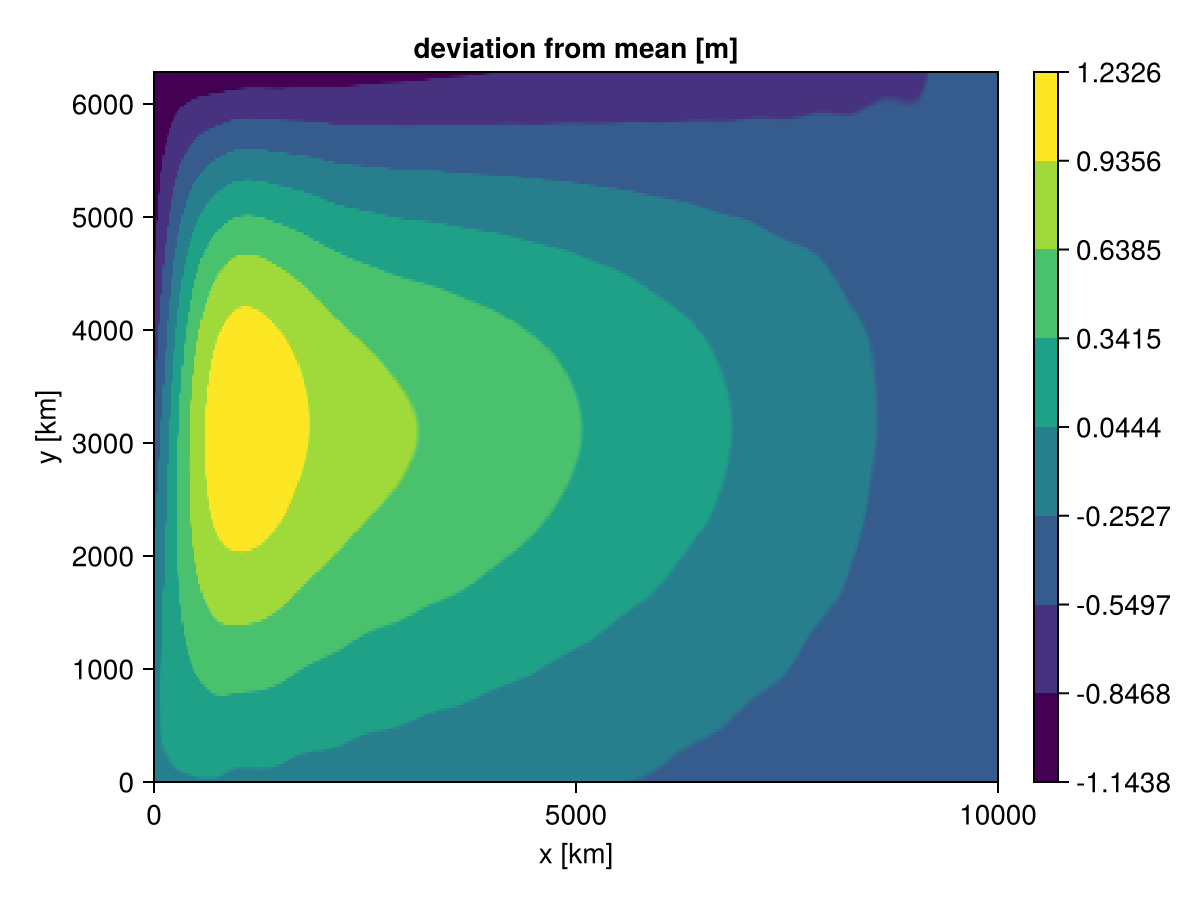}
        \caption{Deviation from mean height.}
    \end{subfigure}
    \hfill
    \begin{subfigure}[t]{0.31\textwidth}
        \includegraphics[width=\textwidth]{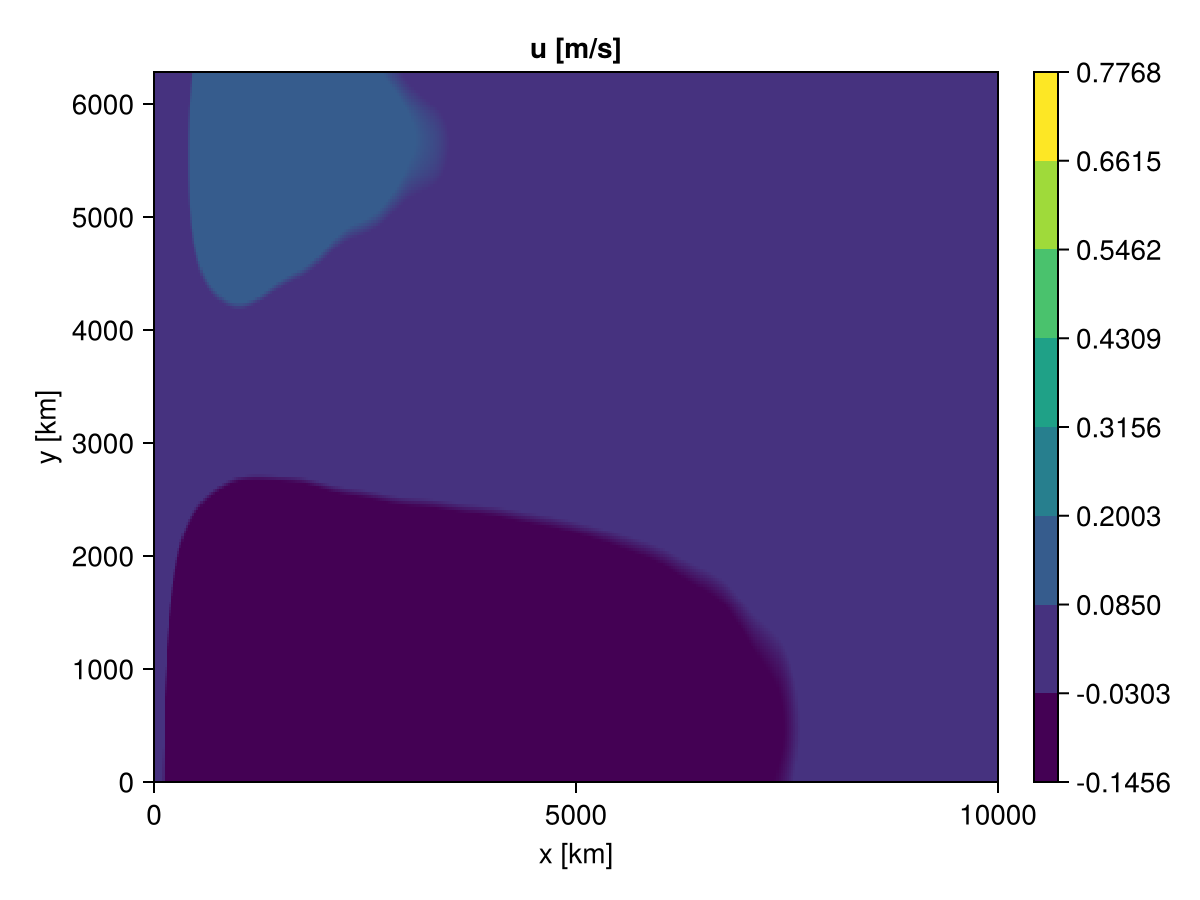}
        \caption{Depth averaged zonal velocity $\bar{u}$.}
    \end{subfigure}
    \hfill
    \begin{subfigure}[t]{0.31\textwidth}
        \includegraphics[width=\textwidth]{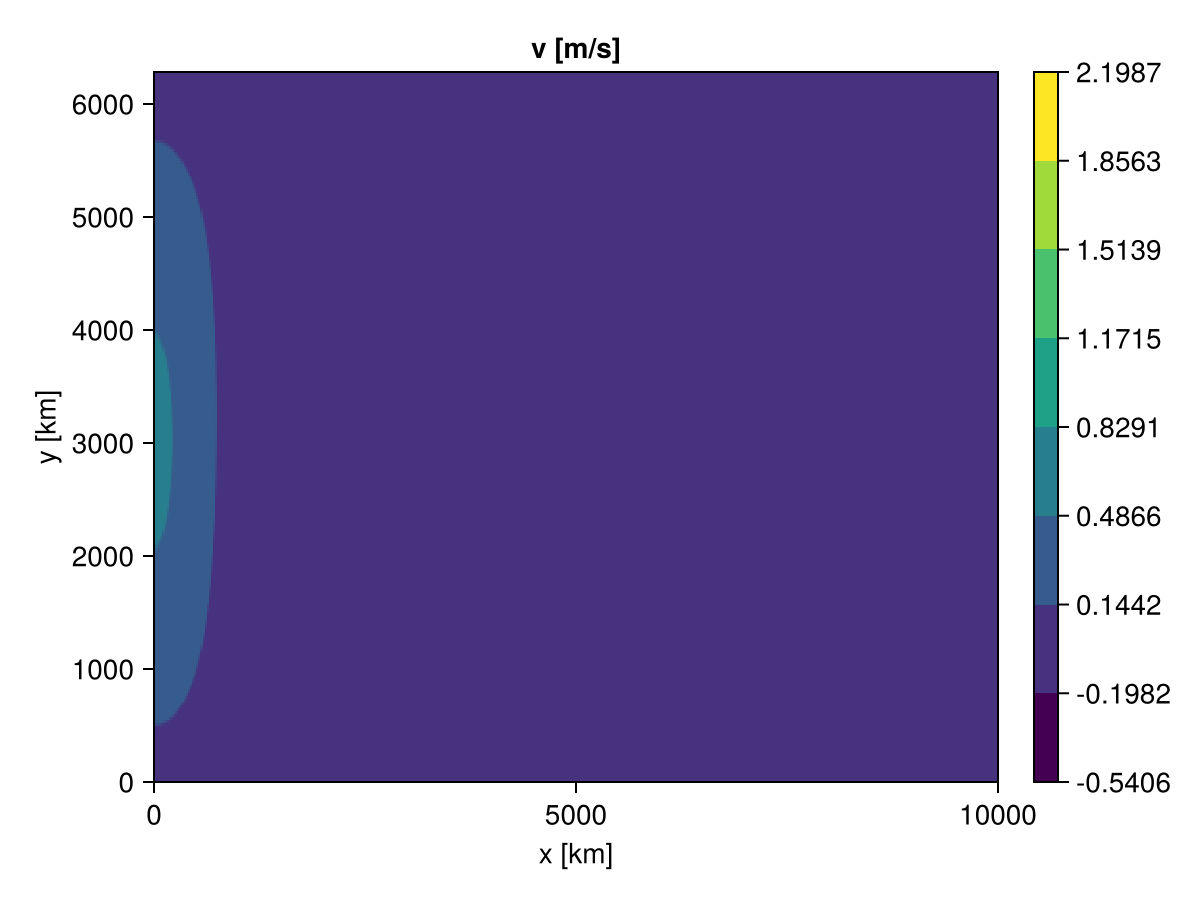}
        \caption{Depth averaged meridional velocity $\bar{v}$.}
    \end{subfigure}

    \begin{subfigure}[t]{0.31\textwidth}
        \includegraphics[width=\textwidth]{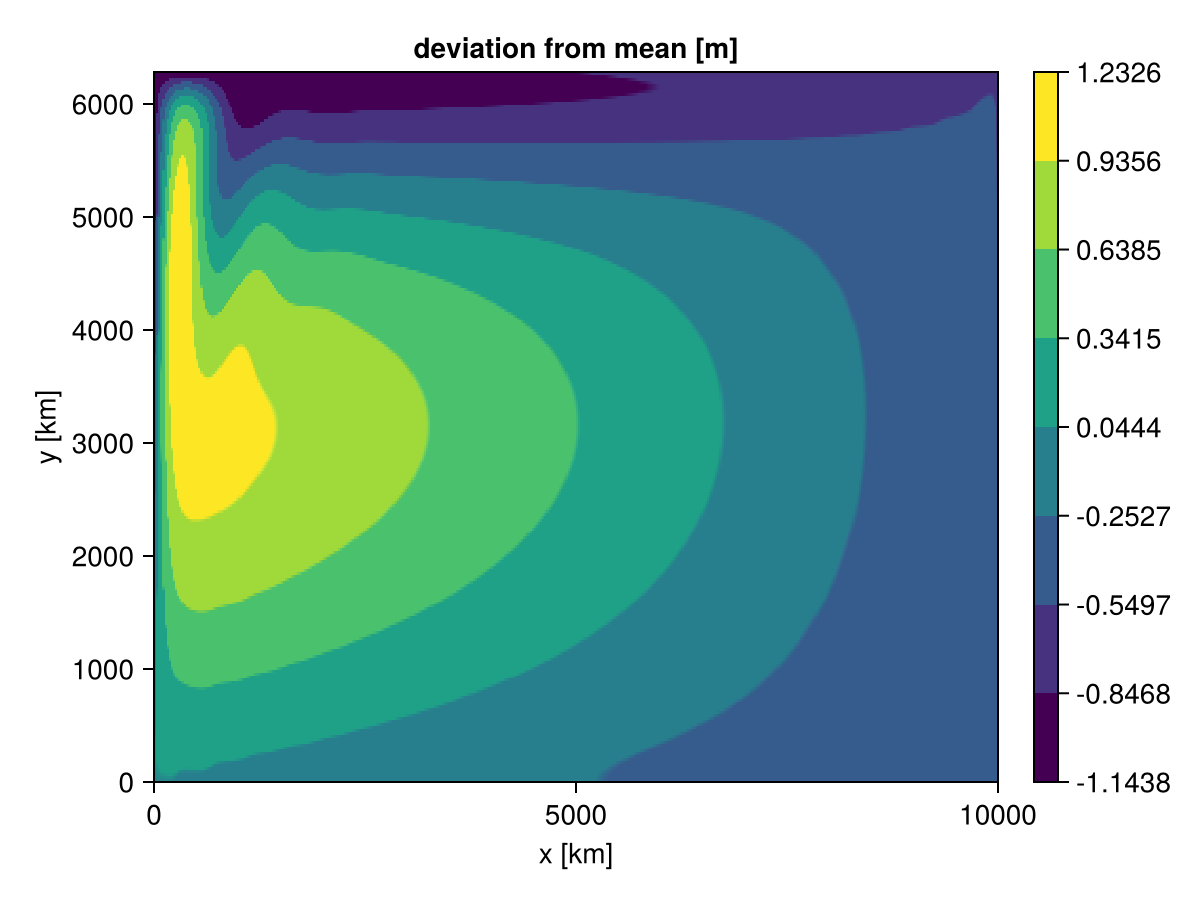}
        \caption{Deviation from mean height.}
    \end{subfigure}
    \hfill
    \begin{subfigure}[t]{0.31\textwidth}
        \includegraphics[width=\textwidth]{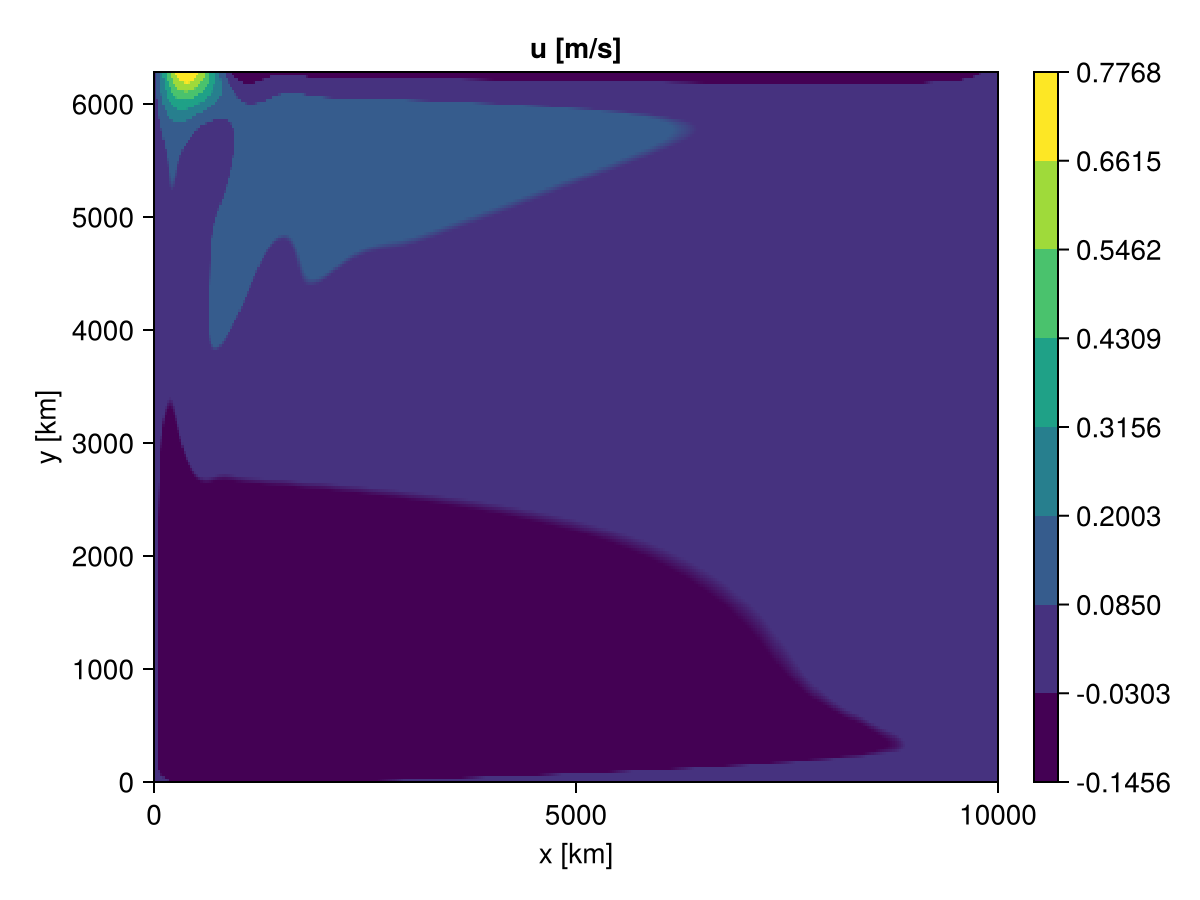}
        \caption{Depth averaged zonal velocity $\bar{u}$.}
    \end{subfigure}
    \hfill
    \begin{subfigure}[t]{0.31\textwidth}
        \includegraphics[width=\textwidth]{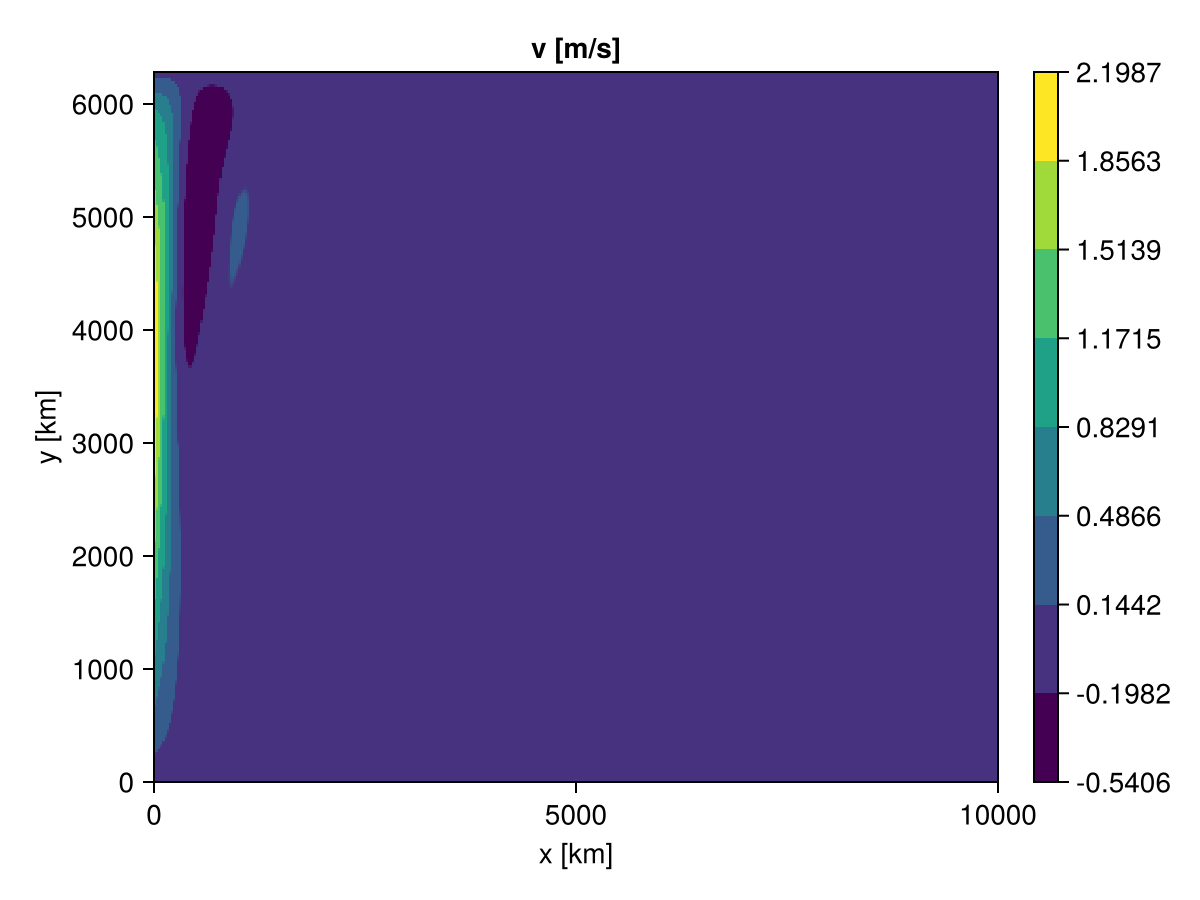}
        \caption{Depth averaged meridional velocity $\bar{v}$.}
    \end{subfigure}
    
    \caption{Numerical solution on a grid with $400 \times 240$ cells ($\Delta x = 25 \, \mathrm{km}$ without well-balancing strategy for the geostrophic equilibrium (top) and with well-balancing strategy (bottom)).}
    \label{fig:wbc_no-wb_vs_wb}
\end{figure}

\begin{figure}
    \centering
    \hfill
    \begin{subfigure}[t]{0.4\textwidth}
        \includegraphics[width=\textwidth]{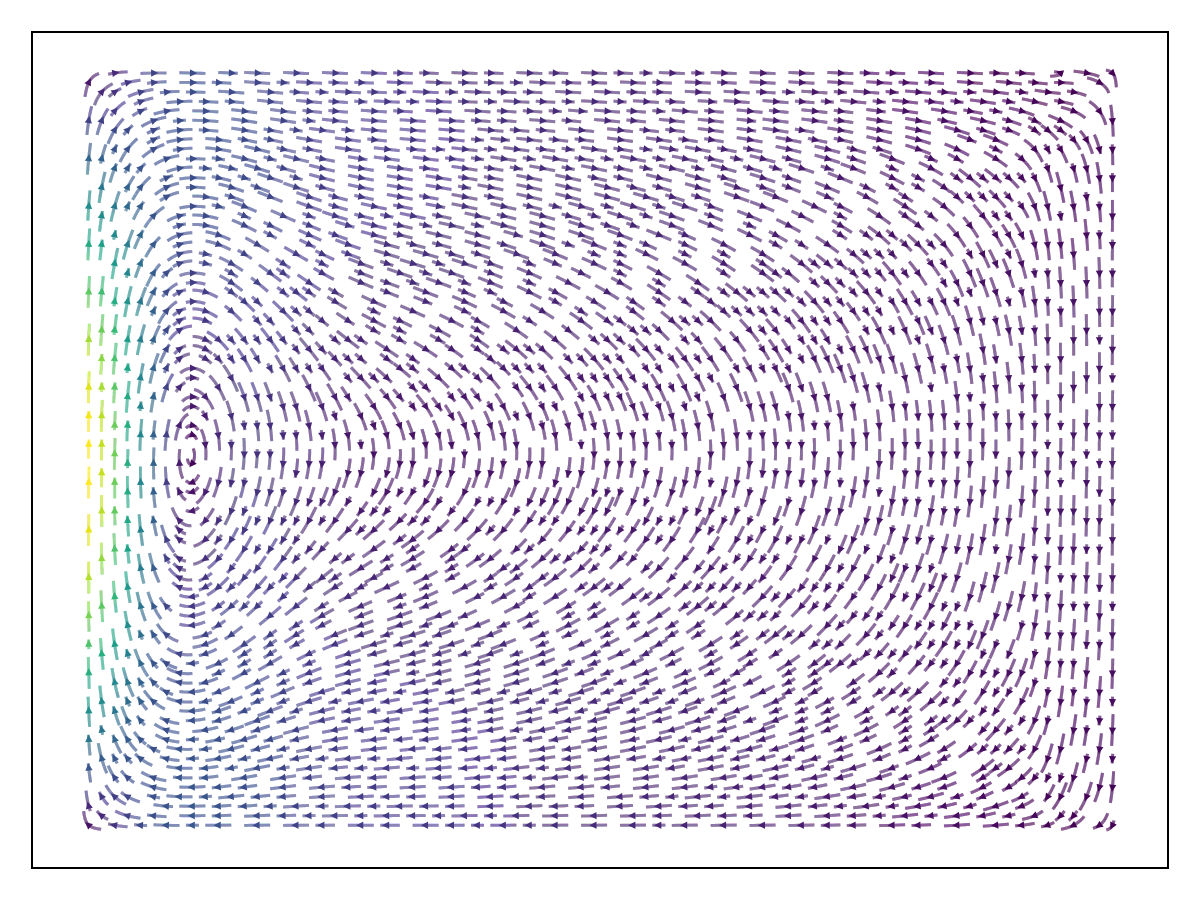}
        \caption{No well-balancing.}
    \end{subfigure}
    \hfill
    \begin{subfigure}[t]{0.4\textwidth}
        \includegraphics[width=\textwidth]{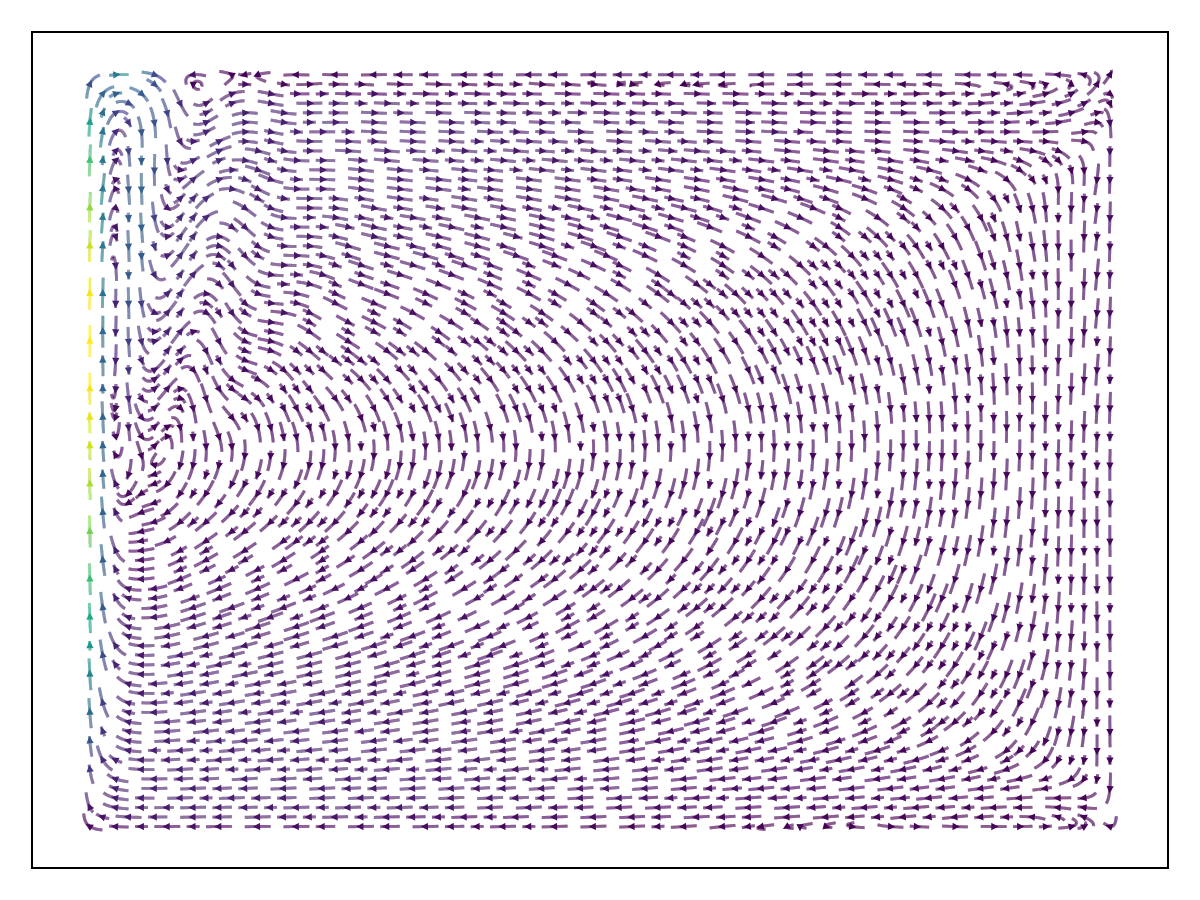}
        \caption{With well-balancing.}
    \end{subfigure}
    \hfill \phantom{.}
    \caption{Streamplot of the depth averaged velocities obtained without (a) and with (b) linear well-balancing strategy for the geostrophic equilibrium.}
    \label{fig:wbc_streamplot}
\end{figure}

\section{Proofs of energy (in)equalities} 

In the following we are presenting the proofs concerning the continuous and discrete energy (in)equalities presented in the sections before.

\subsection{Continuous energy equalities for smooth solutions}\label{sec:continuous_Entropy_Proofs}

We start with the continuous energy equalities.

\begin{proof}[Proof of Proposition~\ref{P:energyequality}]
    The energy equality~\eqref{eq:entropyMSW} for the original multilayer shallow water model can be found in \cite[Proposition 4.2]{ABPSM11a}. 

 We now turn to the energy equality~\eqref{eq:EIBT} fulfilled by the smooth solutions of the barotropic system~\eqref{eq:BT-closed}. 
   We have
    \begin{align*}
         & \quad - \dt \left( \frac{g h^2}{2} \right. + \left. gh \zb + \sumaN \frac{\ha \ua^2}{2} \right)  =  - \left( gh + g \zb + \sumaN \frac{\la \ua^2}{2} \right) \ddt{h} - \sumaN \ha \ua \ddt{\ua} \\
    	= & \left( gh + g \zb+ \sumaN \frac{ \la \ua^2}{2} \right) \dx (h \baru) + \sumaN \ha \ua \left( \baru \ddx{\ua} + g \ddx{h} + g \ddx{\zb} \right) \\
    	= & \left( 2gh \baru + g \zb \baru + \sumaN \frac{\la \baru \ua^2}{2} \right) \ddx{h} + \left( gh^2 + gh \zb + \sumaN \frac{\ha \ua^2}{2} \right) \ddx{\baru} + \sumaN \ha \ua \baru \ddx{\ua} + gh \baru \ddx{\zb}\\
        = & \dx \left( g h^{2} \baru+ g h \zb \baru + \sumaN \frac{\ha \ua^2 \baru}{2} \right).
    \end{align*}
 We now turn to the baroclinic System~\eqref{eq:BC} and Inequality~\eqref{eq:EIBC}. With $\ha=\la h$ and using $\dt \ha=0$ and Identity~\eqref{eq:G} we obtain
 $$ 
        \dt \left( \frac{\ha \ua^2}{2} \right) + \dx \left( \frac{\ha \ua^2 (\ua-\baru)}{2} \right) = \left(\ua \uap - \frac{\ua^2}{2} \right) \Gap - \left( \ua \uam - \frac{\ua^2}{2}\right) \Gam.
    $$
    
The summation over the layers and the upwinding choice~\eqref{eq:interu} yield the result
    $$ \begin{aligned}
        \dt \left( \sumaN \frac{\ha \ua^2}{2} \right) + \dx \left( \sumaN \frac{\ha \ua^2 (\ua-\baru)}{2} \right) & = \sumaN (\ua - u_{\alpha+1}) \left( \uap- \frac{u_{\alpha+1}+\ua}{2} \right) \Gap \\
        & =  - \frac12 \sumaN (\ua-u_{\alpha+1})^2|\Gap| \leq 0.
        \end{aligned}
    $$
\end{proof}

\subsection{Discrete entropy inequality} \label{S:DEIproofs}
We now prove Theorem~\ref{thm:DEI}.

\subsubsection*{Prediction step of the baroclinic step}
We start with the discrete counterpart of~\eqref{eq:EIBC} through the baroclinic part. With $\Eca := \frac{\ha \ua^2}{2}$ the kinetic energy in layer $\alpha$ and $\Ecasa := \frac{\ha \ua^{2} (\ua-\baru)}{2}$  the baroclinic  energy flux this inequality simply writes
$$ \dfrac{\partial}{\partial t} \left( E^{p} + \sumaN \Eca \right) + \dfrac{\partial}{\partial x} \left( \sumaN \Ecasa \right) \leq 0.$$
\begin{lemma} \label{L:pred}
Suppose that~\eqref{eq:Ep} holds, i.e. that the potential energy $E^{p}$ decreases through the prediction step~\eqref{eq:predictionstep}. Define 
$$
\FEcasa_{j+\frac12} = \frac{1}{2}(\uajn)^{2} \left(\Fhajp\right)^{+} -  \frac{1}{2}(\uajpn)^{2}\left(\Fhajp\right)^{-} 
$$
Then $\FEcasa_{j+\frac12}+ F^{E^{p}}_{j+\frac12}$ is consistent with $\Ecasa$ and the following discrete entropy inequality holds
$$
E_{j}^{\star} \leq E_{j}^{n} - \frac{\Dtn}{\Delta x} \left( F^{E^{p}}_{j+\frac12} \sumaN \FEcasajp  -F^{E^{p}}_{j-\frac12} \sumaN - \FEcasajm  \right).
$$
\end{lemma}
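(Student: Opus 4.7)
The plan is to establish the claim by combining the assumed potential energy bound~\eqref{eq:Ep} with a discrete kinetic energy inequality derived \emph{layer by layer}. The key observation is that the prediction step~\eqref{eq:predictionstep} carries no exchange between layers, so for each fixed $\alpha$ the pair $(\hajs, (\ha\ua)_j^\star)$ evolves through an independent upwind update and the discrete entropy analysis reduces to the classical scalar case.

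First, expanding the decentered flux~\eqref{eq:fluxdec} and regrouping around $\uajn$, $\uajpn$ and $\uajmn$ one obtains
\begin{equation*}
\hajs\, \uajs = \uajn\Bigl[\hajn - \frac{\Dtn}{\Delta x}\bigl(\pos{\Fhajp} + \nega{\Fhajm}\bigr)\Bigr] + \frac{\Dtn}{\Delta x}\,\nega{\Fhajp}\,\uajpn + \frac{\Dtn}{\Delta x}\,\pos{\Fhajm}\,\uajmn.
\end{equation*}
Using $\Fhajpm = \pos{\Fhajpm} - \nega{\Fhajpm}$, the sum of the three coefficients on the right equals $\hajs$. Under the CFL condition~\eqref{eq:positivity_prediction} every coefficient is moreover non-negative, so $\uajs$ is a convex combination of $\uajmn$, $\uajn$ and $\uajpn$.

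Applying Jensen's inequality to $\xi \mapsto \xi^{2}/2$ and multiplying through by $\hajs$ then yields
\begin{equation*}
(\Eca)_j^\star \leq \frac{(\uajn)^2}{2}\Bigl[\hajn - \frac{\Dtn}{\Delta x}\bigl(\pos{\Fhajp} + \nega{\Fhajm}\bigr)\Bigr] + \frac{\Dtn}{2\Delta x}\,\nega{\Fhajp}\,(\uajpn)^2 + \frac{\Dtn}{2\Delta x}\,\pos{\Fhajm}\,(\uajmn)^2.
\end{equation*}
Regrouping the right-hand side around $(\Eca)_j^n = \hajn(\uajn)^2/2$ and splitting the resulting bracket into contributions at $j+\tfrac12$ and $j-\tfrac12$ identifies precisely the numerical flux $\FEcasajp = \tfrac12(\uajn)^2\pos{\Fhajp} - \tfrac12(\uajpn)^2\nega{\Fhajp}$ stated in the lemma, giving the layerwise bound $(\Eca)_j^\star \leq (\Eca)_j^n - \tfrac{\Dtn}{\Delta x}\bigl(\FEcasajp - \FEcasajm\bigr)$. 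Consistency of $\FEcasajp$ with $\Ecasa = \tfrac12\ha\ua^{2}(\ua-\baru)$ follows at once from $\pos{\,\cdot\,}-\nega{\,\cdot\,}=\mathrm{id}$ together with the assumed consistency of $\Fha_{j+\frac12}$ with $\ha(\ua-\baru)$.

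It then remains to sum the layerwise inequality over $\alpha = 1,\dots,N$ and add the hypothesis~\eqref{eq:Ep} on the potential energy, which directly gives the claimed inequality with global numerical flux $F^{E^p}_{j+\frac12} + \sumaN\FEcasajp$. The only non-routine ingredient is the convex-combination argument underlying the Jensen step, whose validity hinges on~\eqref{eq:positivity_prediction}; I do not anticipate any further obstacle, as the overall manipulation is the exact discrete counterpart of the continuous computation leading to~\eqref{eq:EIBC}.
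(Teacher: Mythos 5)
Your proof is correct, and it reaches the exact layerwise inequality $(\Eca)_j^\star \leq (\Eca)_j^n - \tfrac{\Dtn}{\Delta x}(\FEcasajp - \FEcasajm)$ that the paper also establishes before summing over $\alpha$ and adding~\eqref{eq:Ep}; but the route is genuinely different. The paper writes $\Fhaua_{j+\frac12} = \uasjp\Fhajp$ with the upwind value $\uasjp$, expands the quantity $\hajs\bigl[(\Eca)_j^\star - (\Eca)_j^n + \tfrac{\Dtn}{\Delta x}(\FEcasajp-\FEcasajm)\bigr]$ in closed form as $\tfrac{\lambda}{2}\hajn\bigl[(\uajn-\uasjp)^2\Fhajp - (\uajn-\uasjm)^2\Fhajm\bigr] + \tfrac{\lambda^2}{2}\Fhajp\Fhajm(\uasjp-\uasjm)^2$, and then concludes nonpositivity by a case analysis on the signs of $\Fhajp$ and $\Fhajm$, each case reducing to the positivity condition~\eqref{eq:positivity_prediction}. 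You instead reuse the convex-combination decomposition of $\hajs\uajs$ that underlies the paper's own maximum-principle proof (Proposition~\ref{prop:maxprincipleprediction}) and upgrade it to the energy via Jensen's inequality for $\xi\mapsto\xi^2/2$; the flux $\FEcasajp$ then falls out of the regrouping with no case analysis. Your argument is shorter and makes transparent that the entropy inequality and the maximum principle rest on the same CFL condition; the paper's computation is longer but yields the explicit quadratic dissipation terms, i.e.\ a quantitative measure of the numerical entropy production. The only point worth flagging is the degenerate case $\hajs=0$, where Jensen's division by $\hajs$ is not licit; but there all three non-negative coefficients vanish (they sum to $\hajs$), so both sides of the kinetic-energy inequality are zero with the usual convention, and the conclusion is unaffected.
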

\begin{proof}  
We prove that
$$
\left(  \Eca \right)_{j}^{\star} \leq \left( \Eca\right)_{j}^{n} - \dfrac{\Dtn}{\Delta x} \left(\FEcasa_{j+\frac12}-\FEcasa_{j-\frac12} \right)$$
and the result follows by summation over the layers and with~\eqref{eq:Ep}. With $\lambda = \frac{\Dtn}{\Delta x}$ we have
$$
\begin{aligned}
\hajs \Delta &
    :=\hajs \left[\left(  \frac{\ha \ua^{2}}{2} \right)_{j}^{\star} -\left(  \frac{\ha \ua^{2}}{2} \right)_{j}^{n} + \lambda \left(\FEcasajp-\FEcasajm \right) \right]  \\
	&= -  \lambda \hajn \uajn \left(\Fhauajp- \Fhauajm\right) + \frac{\lambda^{2}}{2}\left(\Fhauajp- \Fhauajm\right) ^{2}  + \frac{\lambda}{2} \hajn \left(\uajn \right)^{2} \left( \Fhajp-\Fhajm \right) \\
	&  \qquad + \lambda \hajn \left(\FEcasajp-\FEcasajm\right) - \lambda^{2} \left(\Fhajp-\Fhajm\right)\left(\FEcasajp-\FEcasajm\right).
 \end{aligned}
$$
We write $ \Fhauajp=  \uasjp \Fhajp$ and $  \FEcasajp = \frac{(\uasjp)^{2}}{2} \Fhajp$ where $$ \uasjp =
\begin{cases}
\uajn, & \text{ if }  \Fhajp \geq 0, \\
\uajpn, & \text{ otherwise. }
\end{cases}
$$
Straightforward computations yield
\begin{equation*} \label{eq:simplification}
\begin{aligned}	 
  \hajs \Delta 
 	  = \frac{\lambda}{2} \hajn \left[ (\uajn -\uasjp)^{2} \Fhajp - (\uajn -\uasjm)^{2} \Fhajm\right] + \frac{\lambda^{2}}{2} \Fhajp\Fhajm(\uasjp-\uasjm )^{2}.
\end{aligned}
\end{equation*}
When $\Fhajp \Fha_{j-\frac12} \leq 0$ the righthandside is clearly nonpositive. Otherwise
$$
\begin{cases}
\Delta =   -\dfrac{\lambda}{2 \hajs}(\uajn-u_{\alpha,j-1}^{n})^{2} \Fha_{j-\frac12} \left( \hajn- \lambda \pos{\Fhajp}\right) & \text{ if $\Fhajp>0$ and $\Fha_{j-\frac12}>0$} \\
\Delta = \dfrac{\lambda}{2\hajs}(\uajn-u_{\alpha,j+1}^{n})^{2} \Fhajp \left( \hajn+\lambda \nega{\Fha_{j-\frac12}}\right) & \text{ if $\Fhajp<0$ and $\Fha_{j-\frac12}<0$} \\
\end{cases}
$$
which in nonpositive under the CFL condition~\eqref{eq:positivity_prediction}.
\end{proof}

\subsubsection*{Correction step of the baroclinic step}
\begin{lemma}\label{L:cor}
    Consider Definition~\eqref{eq:def_exchange_terms} for the exchange terms and Definition~\eqref{eq:interu} for the interface velocities. 
    The total energy decreases  in the correction step~\eqref{eq:correctionstep} with implicit interface velocities $\sharp=n+\frac12$. It is also true in the explicit case $\sharp = \star$ under the additionnal restriction on the timestep~\eqref{eq:CFLcor}. 
\end{lemma}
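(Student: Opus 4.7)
The plan is to first observe that the water heights $\hajd$ are fixed by the prediction step (Proposition~\ref{P: exchanges}) and are \emph{not} modified during the correction~\eqref{eq:correctionstep}. Consequently the total water height $h$, and hence the potential energy $E^{p} = \tfrac12 g h^{2} + g h \zb$, are unchanged in this step, and it suffices to prove the decay of the kinetic energy
\[
\sumaN (\Eca)_{j}^{n+\frac12} \leq \sumaN (\Eca)_{j}^{\star}.
\]
My starting point for both cases would be the two layerwise update identities
\[
\hajd - \hajs = \Dtn\bigl(\Gapj^{\star} - \Gamj^{\star}\bigr), \qquad (\ha\ua)_{j}^{n+\frac12} - (\ha\ua)_{j}^{\star} = \Dtn\bigl(u_{\alpha+\frac12,j}^{\sharp} \Gapj^{\star} - u_{\alpha-\frac12,j}^{\sharp} \Gamj^{\star}\bigr),
\]
together with the convexity of the kinetic-energy map $(h,m)\mapsto m^{2}/(2h)$, whose gradient is $\bigl(-u^{2}/2,\,u\bigr)$.

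In the implicit case $\sharp = n+\tfrac12$, I would apply the right-tangent convexity inequality at the new state to obtain
\[
(\Eca)_{j}^{n+\frac12} - (\Eca)_{j}^{\star} \leq -\tfrac12 (\uajd)^{2}(\hajd - \hajs) + \uajd\bigl((\ha\ua)_{j}^{n+\frac12} - (\ha\ua)_{j}^{\star}\bigr),
\]
substitute the two updates and sum over $\alpha$. Thanks to the boundary conditions $G_{\frac12,j}^{\star} = G_{N+\frac12,j}^{\star} = 0$, the sum telescopes to one contribution per interior interface and reorganizes as
\[
\sumaN\bigl[(\Eca)_{j}^{n+\frac12} - (\Eca)_{j}^{\star}\bigr] \leq \Dtn\sum_{\alpha=1}^{N-1}\Gapj^{\star}\,(u_{\alpha,j}^{n+\frac12} - u_{\alpha+1,j}^{n+\frac12})\left[u_{\alpha+\frac12,j}^{n+\frac12} - \tfrac{u_{\alpha,j}^{n+\frac12} + u_{\alpha+1,j}^{n+\frac12}}{2}\right].
\]
A direct case analysis on the sign of $\Gapj^{\star}$ together with the upwinding~\eqref{eq:decu} then identifies each summand with $-\tfrac12 |\Gapj^{\star}|(u_{\alpha,j}^{n+\frac12} - u_{\alpha+1,j}^{n+\frac12})^{2}$, giving the desired nonpositive dissipation and mirroring exactly the continuous computation at the end of the proof of Proposition~\ref{P:energyequality}.

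In the explicit case $\sharp = \star$, the convexity is instead exploited through Jensen's inequality. I would decompose the exchange terms into positive and negative parts and use the upwinding~\eqref{eq:decu} to rewrite the momentum update as
\[
(\ha\ua)_{j}^{n+\frac12} = \bigl[\hajs - \Dtn(\Gapj^{\star})^{-} - \Dtn(\Gamj^{\star})^{+}\bigr]u_{\alpha,j}^{\star} + \Dtn(\Gapj^{\star})^{+} u_{\alpha+1,j}^{\star} + \Dtn(\Gamj^{\star})^{-} u_{\alpha-1,j}^{\star}.
\]
The three coefficients sum to $\hajd$ and the CFL condition~\eqref{eq:CFLcor} ensures the nonnegativity of the coefficient of $u_{\alpha,j}^{\star}$, so $\uajd$ is expressed as a convex combination of $u_{\alpha-1,j}^{\star}$, $u_{\alpha,j}^{\star}$ and $u_{\alpha+1,j}^{\star}$, exactly as in Proposition~\ref{P:sumsa}. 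Jensen's inequality applied to $x\mapsto x^{2}/2$, followed by the identity $(u_{\alpha+\frac12,j}^{\star})^{2}\Gapj^{\star} = (u_{\alpha+1,j}^{\star})^{2}(\Gapj^{\star})^{+} - (u_{\alpha,j}^{\star})^{2}(\Gapj^{\star})^{-}$ to reorganize the right-hand side, would yield
\[
(\Eca)_{j}^{n+\frac12} \leq (\Eca)_{j}^{\star} + \tfrac{\Dtn}{2}\bigl[(u_{\alpha+\frac12,j}^{\star})^{2}\Gapj^{\star} - (u_{\alpha-\frac12,j}^{\star})^{2}\Gamj^{\star}\bigr],
\]
and the vertical sum over $\alpha$ would collapse the right-hand side to zero by telescoping, again using $G_{\frac12,j}^{\star} = G_{N+\frac12,j}^{\star} = 0$.

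The main delicate point, as I see it, lies in the contrast between the two cases: in the implicit setting the dissipation is made visible as a sum of squared interface jumps $-\tfrac12|\Gap|(\ua - u_{\alpha+1})^{2}$ attached to each interior interface, whereas in the explicit setting the decrease is absorbed entirely by the Jensen step and the surface flux contributions cancel in the telescopic sum. Matching the upwinding~\eqref{eq:decu} with the correct convexity inequality in each case is what ensures the correct sign, and is the discrete counterpart of the algebraic manipulation carried out at the continuous level in Proposition~\ref{P:energyequality}.
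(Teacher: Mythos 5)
Your proof is correct, and it reaches the same dissipation terms as the paper, but by a genuinely different route. The paper's proof is a bare-hands algebraic expansion: in the explicit case it computes $\hajd\big[(\Eca)_j^{n+\frac12}-(\Eca)_j^{\star}\big]$ directly, substitutes $\hajs=\hajd-\Dtn(\Gapj^{\star}-\Gamj^{\star})$, and isolates per-layer quadratic remainders $(\Gapj^{\star})^{+}(\uajps-\uajs)^2\big[\hajs-\Dtn(\Gamj^{\star})^{+}\big]/\hajd$ (and the mirror term) whose signs are controlled by the upwinding and the CFL condition; in the implicit case it performs an analogous identity and sign analysis. You instead exploit the convexity of $(h,m)\mapsto m^2/(2h)$ systematically: Jensen on a convex combination in the explicit case, and the tangent-line inequality at the new state in the implicit case, followed in both cases by telescoping in $\alpha$ using $G_{\frac12,j}^{\star}=G_{N+\frac12,j}^{\star}=0$. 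Your implicit computation reproduces verbatim the continuous identity of Proposition~\ref{P:energyequality} with the dissipation $-\tfrac12\sum_{\alpha}|\Gapj^{\star}|(u_{\alpha,j}^{n+\frac12}-u_{\alpha+1,j}^{n+\frac12})^2$, which makes the discrete/continuous parallel more transparent than the paper's version; the paper's route, in exchange, exhibits the explicit bracketed coefficients that quantify how close one is to saturating the CFL constraint. One small point to be aware of: the convex-combination coefficient you need to be nonnegative is $\hajs-\Dtn\big((\Gapj^{\star})^{-}+(\Gamj^{\star})^{+}\big)=\hajd-\Dtn\big((\Gapj^{\star})^{+}+(\Gamj^{\star})^{-}\big)$, which does not literally coincide with~\eqref{eq:CFLcor} as printed (it mixes $\hajd$ with the opposite pair of signs); the paper's own proof requires the same quantities $\hajs-\Dtn(\Gamj^{\star})^{+}$ and $\hajs-\Dtn(\Gapj^{\star})^{-}$ to be nonnegative, so this is a shared imprecision in the statement of~\eqref{eq:CFLcor} rather than a gap in your argument.
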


\begin{proof}
The total water height $h$ does not vary during the correction step, so we only have to prove that the total kinetic energy decreases. 
Definitions~\eqref{eq:hajd} and~\eqref{eq:correctionstep} yield in the explicit case
$$
\begin{aligned}
\dfrac{2 \hajd }{\Dtn}  \left( \left(  \Eca \right)_{j}^{n+\frac12}  -\left(  \Eca \right)_{j}^{\star}  \right) &= -\hajs \Gapj^{\star}\left( \uajpds- \uajs \right)^{2}+ \hajs \Gapj^{\star}(\uajpds)^{2} \\
& \qquad  +\hajs \Gamj^{\star}\left(  \uajmds- \uajs \right)^{2}- \hajs \Gamj^{\star}(\uajmds)^{2} \\
 & \qquad \qquad + \Dtn \left( \uajpds \Gapj^{\star} -\uajmds \Gamj^{\star} \right)^{2} \\
\end{aligned}
$$
We replace $\hajs$ by $\hajd-\Dtn (\Gapj^{\star}-\Gamj^{\star})$ in the second and forth terms
 and we arrive at
 $$
 \begin{aligned}
\dfrac{\hajd(\uajd)^{2}  - \hajs (\uajs)^{2} }{\Dtn}&=  - \dfrac{\hajs}{\hajd}\Gapj^{\star}\left(  \uajpds- \uajs \right)^{2}+ \dfrac{\hajs}{\hajd}\Gamj^{\star}\left(  \uajmds- \uajs \right)^{2}\\
& \qquad \qquad +  \Gapj^{\star}(\uajpds)^{2}- \Gamj^{\star}(\uajmds)^{2} \\
& \qquad \qquad + \dfrac{\Dtn}{\hajd} \Gapj^{\star} \Gamj^{\star}(\uajpds-\uajmds)^{2}
\end{aligned}
$$
We now write $\Gapj^{\star} \Gamj^{\star} = \left( (\Gapj^{\star})^{+} -  (\Gapj^{\star})^{-} \right) \left( (\Gamj^{\star})^{+} -  (\Gamj^{\star})^{-} \right)$ and use the decentered choice~\eqref{eq:interu} to get
 $$
 \begin{aligned}
2 \dfrac{ \left(  \Ec \right)_{j}^{n+\frac12}  -\left(  \Ec \right)_{j}^{\star}  }{\Dtn}&   \leq - \sumaN(\Gapj^{\star})^{+}\left(  \uajps- \uajs \right)^{2} \left[\dfrac{\hajs- \Dtn (\Gamj^{\star})^{+} }{\hajd} \right] \\
& \qquad \qquad -\sumaN(\Gamj^{\star})^{-}\left(  \uajms- \uajs \right)^{2} \left[\dfrac{\hajs- \Dtn (\Gapj^{\star})^{-} }{\hajd} \right].
\end{aligned}
$$
The bracket terms are non-negative under the CFL condition~\eqref{eq:CFLcor}. 

The implicit case $\sharp=n+\frac12$ which is simpler. We write
$$
\begin{aligned}
\dfrac{\hajs (\uajd)^{2}  -  \hajs (\uajs)^{2} }{\Dtn}& =  -   \Gapj^{\star} \left(   \uajpdnd - \uajd \right)^{2} +  \Gamj^{\star} \left( \uajmdnd - \uajd\right)^{2} \\
 & \quad +    \Gapj^{\star} (\uajpdnd)^{2} -  \Gamj^{\star} (\uajmdnd)^{2} \\
 & \quad \quad   - \Gapj^{\star} (\uajpdnd)^{2} + \Gamj^{\star} (\uajmdnd)^{2} 
\end{aligned}
$$
Choice~\eqref{eq:interu} yields the result
$$
\begin{aligned}
2 \dfrac{ \left(  \Ec \right)_{j}^{n+\frac12}  -\left(  \Ec \right)_{j}^{\star}  }{\Dtn}
 & \leq - \sumaN \left[ \pos{\Gapj^{\star}} \left(   u_{\alpha, j+1}^{n+\frac12} - \uajd \right)^{2} + \nega{\Gamj^{\star}} \left( u_{\alpha, j-1}^{n+\frac12} - \uajd\right)^{2} \right].
\end{aligned}
$$
\end{proof}

\subsubsection*{Barotropic step}
We now turn to the discrete entropy inequality in the barotropic loop. Let us recall that the physically relevant solutions of the shallow water equation (first two equations of~\eqref{eq:BT}) verify the entropy inequality
$$
\partial_{t} E^{sw}+ \partial_{x}  f^{sw} \leq 0
$$
where denote by $E^{sw}= \frac{g h^{2}}{2} + g h \zb + \frac{ h \baru^{2}}{2}$ and $f^{sw} = g h^{2} \baru + g h \zb \baru + \frac{ h \baru^{3}}{2} $
\begin{lemma} \label{L:EIBT}
Suppose that the shallow water scheme~\eqref{eq:FV-SW}  is entropy satisfying under Condition~\eqref{eq:CFL_SWE}: there exists some discrete numerical entropy fluxes $f^{sw, k}_{j\pm\frac12}$ consistent with the continuous entropy flux $f^{sw}$ such that
\begin{equation} \label{eq:DEI-SW}
 E^{sw, n+\frac12,k+1}_{j} \leq  E^{sw, n+\frac12,k}_{j}  + \dfrac{\dtk}{\Delta x} \left(  f^{sw, k}_{j+\frac12}-f^{sw, n, k}_{j-\frac12} \right).
\end{equation}
If  Condition~\eqref{eq:non-negativity} holds, there exists some numerical entropy fluxes $\mathcal{F}^{E}_{j \pm \frac12}$ consistent with 
the entropy flux of~\eqref{eq:EIBT} such that the following discrete entropy inequality holds
$$
E_{j}^{n+1} \leq E_{j}^{n+\frac12} + \dfrac{\Dtn}{\Delta x} (\mathcal{F}^{E}_{j+\frac12}-\mathcal{F}^{E}_{j-\frac12}).$$
\end{lemma}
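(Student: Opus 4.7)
The approach splits the total energy into a barotropic piece and a deviation piece that can be analyzed independently. Writing $\ua = \baru + \sa$ and using $\sumaN \la \sa = 0$ (Proposition~\ref{P:sumsigma}), the cross term vanishes and one obtains the algebraic identity
$$
E = E^{sw} + \sumaN \frac{\ha \sa^{2}}{2}, \qquad E^{sw} := \frac{g h^{2}}{2} + g h \zb + \frac{h \baru^{2}}{2}.
$$
Applying the same identity to the continuous flux of~\eqref{eq:EIBT} produces the decomposition $f^{E} = f^{sw} + \sumaN \frac{\ha \sa^{2}\baru}{2}$, which suggests defining $\mathcal{F}^{E}_{j+\frac12}$ as a sum of two discrete fluxes, one for each contribution, each consistent with the corresponding continuous one.

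The shallow water contribution follows directly from the assumption: telescoping the substep discrete entropy inequality~\eqref{eq:DEI-SW} over $k=0,\dots,K-1$ (each $\dtk$ obeying~\eqref{eq:CFL_SWE}) yields
$$
E^{sw,n+1}_{j} \leq E^{sw,n+\frac12}_{j} + \frac{\Dtn}{\Delta x}\left( \mathcal{F}^{sw}_{j+\frac12}-\mathcal{F}^{sw}_{j-\frac12}\right), \qquad \Dtn\, \mathcal{F}^{sw}_{j+\frac12} := \sum_{k=0}^{K-1}\dtk\, f^{sw,k}_{j+\frac12}.
$$
Consistency of $\mathcal{F}^{sw}$ with $f^{sw}$ is inherited from that of the $f^{sw,k}$ because the weights $\dtk/\Dtn$ are non-negative and sum to one.

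For the deviation contribution the key observation is that the one-shot update~\eqref{eq:subcycling} together with the preserved constraint $\ha = \la h$ has exactly the algebraic structure of the prediction step treated in Lemma~\ref{L:pred}: the role of $\ua$ is played by $\sa$, the role of the individual mass flux $\Fhajp$ is played by the effective barotropic flux $\la \fhjp$, and the small prediction timestep is replaced by $\Dtn$. Rerunning the computation of Lemma~\ref{L:pred} verbatim therefore produces, layer by layer,
$$
\dfrac{(\ha \sa^{2})_{j}^{n+1}}{2} \leq \dfrac{(\ha \sa^{2})_{j}^{n+\frac12}}{2} - \dfrac{\Dtn}{\Delta x}\left( \mathcal{G}^{\sa}_{\alpha,j+\frac12}-\mathcal{G}^{\sa}_{\alpha,j-\frac12}\right),
$$
with the upwinded flux $\mathcal{G}^{\sa}_{\alpha,j+\frac12} := \frac{\la}{2}\bigl((\sajd)^{2}\pos{\fhjp} - (\sajpd)^{2}\nega{\fhjp}\bigr)$, and the non-negativity condition required in the ``bad'' cases $\fhjp\,\fhjm>0$ is exactly~\eqref{eq:non-negativity}. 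Summing over $\alpha$ and adding to the shallow water bound delivers the claim with $\mathcal{F}^{E}_{j+\frac12} := \mathcal{F}^{sw}_{j+\frac12} + \sumaN \mathcal{G}^{\sa}_{\alpha,j+\frac12}$, which matches the continuous flux $f^{E}$ by construction. The main technical point is to verify that the algebraic manipulations performed in the proof of Lemma~\ref{L:pred} depend only on the signs of the fluxes and on the quantity $\hjd - (\Dtn/\Delta x)(\pos{\fhjp}+\nega{\fhjm})$, so that condition~\eqref{eq:non-negativity} really is the right barotropic analogue of~\eqref{eq:positivity_prediction}; once this verification is in place, no further work is needed.
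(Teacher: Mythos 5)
Your proposal is correct and follows essentially the same route as the paper: the same decomposition $E = E^{sw} + \sumaN \tfrac{\ha\sa^2}{2}$ via $\sumaN \la\sa = 0$, the same telescoping of~\eqref{eq:DEI-SW} over the substeps, and the same upwind entropy-flux estimate for the deviation part under~\eqref{eq:non-negativity}. The only cosmetic difference is that you transfer the deviation estimate from Lemma~\ref{L:pred} by the substitution $\ua\mapsto\sa$, $\Fhajp\mapsto\la\fhjp$, $\Dtn$ as timestep (a valid identification, since $\ha=\la h$ makes~\eqref{eq:positivity_prediction} reduce exactly to~\eqref{eq:non-negativity}), whereas the paper redoes the same case-by-case computation explicitly.
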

\begin{proof}
Proposition~\ref{P:sumsa} implies that the identity $E=E^{sw}+ \sumaN \Eca$ holds at the discrete level.
The summation of the inequalities~\eqref{eq:DEI-SW} on all the subtimestep gives
$$E_{j}^{sw, n+1}  :=E_{j}^{sw, n+1, K} \leq E_{j}^{sw, n+\frac12} + \dfrac{\Dtn}{\Delta x} \left( \mathcal{F}^{sw}_{j+\frac12} - \mathcal{F}^{sw}_{j-\frac12} \right) \ \text{ where } \ \Dtn \mathcal{F}^{sw}_{j+\frac12}= \sum_{k=0}^{K-1 } \dtk f_{j+\frac12}^{sw, k} .  $$
We now prove that
$$
 (h \sa^{2})_{j}^{n+1} \leq  (h \sa^{2})_{j}^{n+\frac12}  - \dfrac{\Dtn }{\Delta x}  \big( (\sigma_{\alpha, j+\frac12}^{n})^{2} \mathcal{F}_{j+\frac12}^{h}-  (\sigma_{\alpha, j-\frac12}^{n})^{2} \mathcal{F}_{j-\frac12}^{h} \big)
 $$
 with the familiar decentered choice
$$ \sigma_{\alpha, j+\frac12}^{n}= 
\begin{cases}  
\sigma_{\alpha,j}^n & \text{ if } \mathcal{F}_{j+\frac12}^{h} \geq 0 \\  
\sigma_{\alpha,j+1}^n &\text{ if } \mathcal{F}_{j+\frac12}^{h} < 0 
\end{cases} $$
Multiplying by  $\hjnp$ which is nonnegative if Condition~\eqref{eq:non-negativity} holds, the inequality is equivalent to nonnegativity of
 $$  -\mathcal{F}_{j+\frac12}^{h} \hjn \left[ \sigma_{\alpha, j+\frac12}^{n+\frac12} - \sigma_{\alpha,j}^{n+\frac12}\right]^{2}  + \mathcal{F}_{j-\frac12}^{h} \hjn \left[ \sigma_{\alpha, j-\frac12}^{n+\frac12} - \sigma_{\alpha,j}^{n+\frac12} \right]^{2} 
 - \frac{\Dtn}{\Delta x}  \mathcal{F}_{j+\frac12}^{h} \mathcal{F}_{j-\frac12}^{h} \left[\sigma_{\alpha, j-\frac12}^{n+\frac12}-\sigma_{\alpha, j+\frac12}^{n+\frac12} \right]^{2}.$$
 This quantity is equal to
 $$
 \begin{cases}
 \mathcal{F}_{j-\frac12}^{h} \left[ \sigma_{\alpha,j-1}^{n+\frac12} - \sigma_{\alpha,j}^{n+\frac12} \right]^{2}  \left[ \hjn  - \frac{\Dtn}{\Delta x} \pos{ \mathcal{F}_{j+\frac12}^{h}}\right] & \text{ if  $\mathcal{F}_{j+\frac12}^{h} \geq 0$ and $\mathcal{F}_{j-\frac12}^{h} \geq 0$}, \\
 -\mathcal{F}_{j+\frac12}^{h} \left[ \sigma_{\alpha, j+1}^{n+\frac12} - \sigma_{\alpha,j}^{n+\frac12} \right]^{2}  \left[ \hjn  + \frac{\Dtn}{\Delta x}  \mathcal{F}_{j-\frac12}^{h} \right]  & \text{ if  $\mathcal{F}_{j+\frac12}^{h} \leq 0$ and $\mathcal{F}_{j-\frac12}^{h} \leq 0$}, \\
 \text{clearly nonnegative otherwise.}
 \end{cases}
 $$
Condition~\eqref{eq:non-negativity}  yields the nonnegativity in the first two cases.

\end{proof}

\paragraph{Acknowledgements} S. Hörnschemeyer was supported in part by Deutsche Forschungsgemeinschaft under DFG grant 320021702 / GRK2326. N. Aguillon acknowledge the ANR – FRANCE (French National Research Agency) for its financial support of the project NASSMOM  ANR-23-CE56-0005-01. \\
The authors would like to thank Emmanuel Audusse and Carlos Par\'es for fruitful discussions. In addition SH would like thank Sebastian Noelle for a careful reading and discussions of the manuscript.

\bibliographystyle{alpha}
\bibliography{biblio}

\end{document}